\renewcommand{\Pr}{\operatorname{\mathds{P}}}
\newcommand{\RR}{\mathds{R}}
\newcommand{\one}[1]{\mathds{1}_{\{#1\}}}
\newcommand{\ds}{\displaystyle}
\newcommand{\E}{\operatorname{\mathds{E}}}
\newcommand{\Var}{\operatorname{\mathsf{Var}}}
\newcommand{\Cov}{\operatorname{\mathsf{Cov}}}
\newcommand{\Corr}{\operatorname{\mathsf{Corr}}}
\newcommand{\re}{\operatorname{\mathsf{res\hspace*{.3ex}eff}}}
\newcommand{\rr}{\operatorname{\mathsf{red\hspace*{.3ex}ratio}}}
\newcommand{\card}{\operatorname{card}}
\newcommand{\tr}{\operatorname{tr}}
\newcommand{\logit}{\operatorname{logit}}
\newcommand{\diag}{\operatorname{diag}}
\newcommand{\rank}{\operatorname{rank}}
\newcommand{\md}{\operatorname{md}}
\newcommand{\CZ}{\mathcal{Z}}
\newcommand{\CS}{\mathcal{S}}
\newcommand{\CW}{\mathcal{W}}
\newcommand{\sfc}{\mathsf{C}}
\newcommand{\sfv}{\mathsf{V}}
\newcommand{\sfe}{\mathsf{e}}
\newcommand{\Sig}{\textrm{$\centernotlarge{\mathrm{\Sigma}}$}}
\newcommand{\ud}{{\rm d}}
\newcommand{\ta}{T}
\newcommand{\h}[1]{\widehat{#1}}
\newenvironment{pr}[1]
{\noindent$\emph{#1}$\hspace*{.6ex}}
{\hspace{\fill}$\square$\medskip\\}
 \journalname{}
\begin{document}

\title{Optimality of Training/Test Size and Resampling Effectiveness of Cross-Validation Estimators of the Generalization Error
       \thanks{Both authors acknowledge support provided by the Department of Biostatistics, University at Buffalo (in the form of start-up package to the second author).}
      }

\titlerunning{Optimality of Training/Test Size}        

\author{Georgios Afendras \and
        Marianthi Markatou
}


\institute{G. Afendras \at
              Department of Biostatistics, University at Buffalo, 811 Kimball Tower, Buffalo, NY 14214, USA \\
              Tel.: +1 (716) 829-5094 \\
              Fax: +1 (716) 829-2200 \\
              \email{gafendra@buffalo.edu}           
           \and
           M. Markatou \at
              Department of Biostatistics and School of Biomedical Sciences, University at Buffalo, 726 Kimball Tower, Buffalo, NY 14214, USA \\
              Tel.: +1 (716) 829-2894 \\
              Fax: +1 (716) 829-2200 \\
              \email{markatou@buffalo.edu}
}

\date{}

\maketitle

\begin{abstract}
An important question in constructing Cross Validation (CV) estimators of the generalization error is whether rules can be established that allow ``optimal'' selection of the size of the training set, for fixed sample size $n$. We define the {\it resampling effectiveness} of random CV estimators of the generalization error as the ratio of the limiting value of the variance of the CV estimator over the estimated from the data variance. The variance and the covariance of different average test set errors are independent of their indices, thus, the resampling effectiveness depends on the correlation and the number of repetitions used in the random CV estimator. We discuss statistical rules to define optimality and obtain the ``optimal'' training sample size as the solution of an appropriately formulated optimization problem. We show that in a broad class of loss functions the optimal training size equals half of the total sample size, independently of the data distribution. We optimally select the number of folds in $k$-fold cross validation and offer a computational procedure for obtaining the optimal splitting in the case of classification (via logistic regression). We substantiate our claims both, theoretically and empirically.
\keywords{Cross Validation Estimator \and Optimality \and Resampling Effectiveness \and Training sample size.}
\end{abstract}

\section{Introduction}
\label{sec:intro}
Advances and accessibility to technology for capturing and storing large amounts of data has transformed many scientific fields. Yet, for many important scientific questions very large data sets are not available. In these cases, when learning algorithms are used, resampling techniques allow the estimation of the generalization error, an important aspect of algorithmic performance.

The generalization error is defined as the error an algorithm makes on cases that the algorithm has never seen before. The generalization error is important because it relates to the algorithm's prediction capabilities on independent data. The literature includes both, theoretical investigations of risk performance of machine learning algorithms as well as numerical comparisons.

Estimation of the generalization error can be achieved via the use of resampling techniques. The process consists of splitting the available data into a learning or training set and a test set a large number of times and averaging over these repetitions. A very popular resampling technique is cross validation. A detailed overview of the vast literature on cross-validation is outside the scope of this paper. The interested reader is referred to \citet{Stone1974,Stone1977} for foundational aspects of cross validation, \citet[Ch.s~3,8]{BFOS1984}, \citet{Geisser1975}, and to \citet{AC2010} for a comprehensive survey. Here, it is sufficient to notice that although cross validation procedures are extensively used in practice, their performance is studied by very few theoretical papers.

An important question in constructing cross validation estimators of the generalization error is the selection of the size of the training (and hence test) set for ``optimal'' estimation of the generalization error. In this paper, we address the question of training sample size selection and refer to work by \citet{Burman1989,Burman1990} in identifying the rules for ``optimal'' selection, where ``optimality'' is defined via minimization of the variance of the cross validation estimators.

Estimating the generalization error of a learning method is important not only for understanding its future prediction accuracy, but also for choosing a classifier from a given set, or for combining classifiers \citep{Wolpert1992}. Furthermore, one may be interested in testing hypotheses about equality of generalization errors between learning algorithms. In \citet{vdWBvW2009}, in the field of biostatistics, an inference framework is developed for the difference in errors between two prediction procedures. In that paper, for each splitting in repeated sub-sampling the predictions of the two classifiers are compared by a Wilcoxon test, and the resulting $p$-values are combined. \citet{vdWBvW2009} state that ``$\ldots$ little attention is paid to inference for comparing prediction accuracies of two models or, more generally, prediction procedures''. Carrying out inference on equality of generalization errors requires insight into the variance of the estimator of the generalization error. But providing a variance of the generalization error estimator is a more difficult problem because of the complexity of existing estimators and the many dependencies they exhibit on, for example the loss function, the resampling method used, the ratio of the training to test sample size and other factors. \citet{NB2003} were the first to provide estimators for the variance of the random cross validation estimator of the generalization error. Furthermore, \citet{MTBH2005} proposed a moment approximation-based estimator of the same cross validation estimator of the generalization error, and compared this estimator with those provided by \citeauthor{NB2003}. Other relevant work on variance estimation includes \citet{WL2014,FHdBB2013,MDS2011}.

Selecting the size of the training set and understanding the effect of this selection on the generalization error, its bias and its variance, is of interest to many areas of scientific investigation. Examples include pattern recognition and machine learning \citep{Highleyman1962,FH1989,RJ1991,GMSV1998,Kearns1997}, statistics \citep{BP2004,CCP2015}, remote sensing \citep{ZQSZ2013,JSM2014}, biostatistics and bioinformatics \citep{DS2005,DS2007,DS2011,DZS2008,DS2013,PCGHSSNTINHVBDHSSP2010,SFCWC2013} among others. In this paper, we offer an analysis of the problem of training sample size selection when the total sample size is fixed and interest centers on carrying out inference on the generalization error. The analysis is complex, and for this reason we consider three kinds of problems to cover a good range of possible applications. These include the case where the decision rule is the sample mean, the regression case and classification via logistic regression. The strategy we follow allows us to draw general conclusions about ``optimal'' selection of the training (and hance test) set sample size, and to recommend simple rules for practical use despite the complexity of the problem.

We analyze training sample size selection for cross validation estimators of the generalization error. Specifically, we study random cross validation estimators and $k$-fold estimators based on general loss functions. We first discuss two novel approaches for obtaining the resampling size $J$ of the random cross validation estimator of the generalization error \citep{NB2003,MTBH2005}. \citet[][p.~255, Sect.~5]{NB2003} state ``We also want to make recommendations on the value of $J$ to use for those methods that involve $\h{\mu}_{{\rm CV},J}$''. We offer methods for obtaining $J$ that are based on the $\Var(\h{\mu}_{{\rm CV},J})$. Furthermore, we establish rules for the ``optimal'' selection of the training set sample size for cross validation estimators of the generalization error. We show that when the decision rule is the sample average and the loss function is sufficiently smooth and belongs in \citeauthor{Efron1983}'s $q$-class (\citeyear{Efron1983,Efron2004}), the optimal value of the training sample size is $\lfloor n/2 \rfloor$, independent of the data distribution, where $\lfloor\cdot\rfloor$ indicates the upper integer part of a real number. When the decision rule is regression and the loss function is squared error loss, or the task is classification via logistic regression, the optimal value of the training sample size is again $\lfloor n/2 \rfloor$, independent of the data distribution. In general, the optimal value of the training sample size depends primarily on the loss function and secondly on the data distribution. We offer a construction of a new loss function that exemplifies this dependence. Additionally, we provide a rule for selecting the ``optimal'' number of folds $k$, in a $k$-fold cross validation experiment. We follow with a discussion of the merits and limitations of different forms of cross validation, and offer recommendations for use in practice.

This paper is organized as follows. Section \ref{sec:relevant} presents relevant previous work and section \ref{sec:J} discusses two approaches to obtain the resampling size $J$. Section \ref{sec:training/test} presents the analysis for optimally choosing the training sample size, while section \ref{sec:simu} presents simulation results. Section \ref{sec:real} applies the methods to real data sets, while section \ref{sec:conclusions} offers discussions and recommendations. Finally, proofs of the obtained results are presented in the Appendix, and also in the online supplement.

\section{Relevant work and notation}
\label{sec:relevant}

In this section we review briefly related work and set up the notation we will use in this paper.

Interest in the problem of ``optimal'' partitioning of a fixed sample $n$ in two sets, training and testing, dates back to 1962. \citet{Highleyman1962} studied this problem in the context of designing a pattern  recognition machine and defined the ``optimum'' partitioning of the total sample as that partitioning which minimizes the variance of an unbiased estimator of the error probability of the pattern recognition machine. \citet{Highleyman1962} states that the same rule, that is the minimization of variance of the error of a classification rule, can be used with estimators of the error that can be corrected for bias. Similar work has been carried out by \citet{LG1999}. These authors address the problem of splitting a fixed sample $n$ into a training, a testing and a validation set using as a criterion the mean squared error. Only the squared error loss is studied in the simple case of location parameter estimation. The cross validation procedures they study are hold-out cross validation, $k$-fold and randomized permutation cross validation. The last cross validation procedure is defined by resampling the test set $J$ times, $J\le{n\choose n_2}$, $n$ is the total sample size and $n_2$ is the test set sample size.

The same problem appears also in the Biostatistics literature and in particular in the area of Bioinformatics. \citet{DS2011} consider the problem of developing genomic classifiers, empirically address the question of what proportion of the samples should be devoted to the training set, and study the impact that this proportion has on the mean squared error of the prediction accuracy estimate. The context within which \citet{DS2011} address this question is the one where the number of predictors $p$ is considerably larger than the number of samples $n$, i.e.\ it is the high-dimensional, small sample size case. \citet{DS2011} evaluated two approaches to data split. The first consists of using results of simulations that were designed to understand qualitatively the relationships among data set characteristics and optimal split proportions to evaluate commonly used rules of thumb, such as allocating $1/2$ or $2/3$ of the data set to training and the remaining to the test sets. The second approach consists of developing a nonparametric procedure that is based on decomposing the mean squared error into three components, two corresponding to appropriately defined variance terms and one corresponding to a squared bias term. There are fundamental differences between the work of these authors and our work. First, their setting of small $n$, large $p$ is different than ours and secondly, even in this setting, they do not address the case of cross validation estimators.

In this paper, we do not address the high-dimensional, small sample size case. We address the allocation of cases to training/test set when the total sample size $n$ is fixed and known, reflecting a limited or medium size data with $p$ considerably smaller than $n$. In this setting, we study the rules for optimal split of the data for the random cross validation and the $k$-fold CV estimators of the generalization error, which we describe in detail below. But before we do this, we briefly describe two very relevant to our work papers that we use in setting the rules for selecting ``optimally'' the size of training, and hence test, set.

In a series of impressive papers \citet{Burman1989,Burman1990} studied the performance of $k$-fold cross validation and of the repeated learning-testing method, which is another name to describe random cross validation. In what follows, we will succinctly describe Burman's results. Reference to the $k$-fold cross validation (which Burman calls $v$-fold CV) the following, relevant to our paper, points are made. 1) In the case of $k$-fold CV with a small number of folds $k$, the uncorrected CV estimate may have large bias; thus, a corrected estimate is proposed, and it is denoted by ${\rm CV}_{nk}^*$; 2) the bias of the corrected $k$-fold CV estimator of the generalization error is {\it always smaller} than the bias of the uncorrected estimator; 3) however, $\Var({\rm CV}_{nk}^*-s_n)\simeq\Var({\rm CV}_{nk}-s_n)$ when $k>3$ and $n\ge24$; here, $s_n$ is defined as $s_n=\int L(z,F_n)\ud F(z)$, $L$ is a loss function, $F_n$ is the empirical cumulative distribution function of the sample $z$, and $F$ represents the underlying true and unknown distribution; 4) the bias and variance of the $k$-fold CV estimator of the generalization error decrease as the number of folds increases. The last two results indicate that for the case of $k$-fold CV estimator of the generalization error it is sufficient to study only the variance of the uncorrected estimator as opposed to the mean squared error of the uncorrected estimator. This is because the variance of the corrected, almost unbiased estimator, is approximately the same with the variance of the uncorrected estimator. This observation greatly simplifies the analysis of obtaining ``optimal'' splits of the sample.

For the case of repeated learning-testing method, or in our terminology random cross validation, the following points are relevant to the work presented here. 1) A bias correction term is needed to obtain an almost unbiased estimate of the generalization error. This term is provided in \citet{Burman1989}; 2) the variances of the corrected and uncorrected estimators decrease as the number of repeats increases. \citet{Burman1989} presents a simulation study where the two variances are approximately equal when the number of repetitions $J>4$ and the total sample size is $n\ge24$ for all values of the proportion of samples allocated to the test set; 3) the bias does not depend on the number $J$ of repeats; 4) the only way to reduce variance is by increasing the number of repeats $J$. We provide rules that allow ``optimal'' selection of the number of repeats in the random cross validation case. The relevance of these results for our work is that again we can use the variance of the uncorrected random CV estimate of the generalization error, instead of the more complicated mean squared error, to set ``optimal'' rules for data splitting into training and test sets. This is because the variance of the bias-corrected cross validation estimator is approximately the same with the variance of the uncorrected cross validation estimator.

Next we set the notation we use in this paper and explicitly discuss the two estimators of the generalization error we work with.

Fix a positive integer $n$ and consider the set $N=\{1,2,\ldots,n\}$. Let $Z_i$, $i=1,\ldots,n$ be data collected such that the data universe, $\CZ_N=\{Z_1,Z_2,\ldots,Z_n\}$, is a set of independent, identically distributed observations that follow an unknown distribution $F$. Let $S$ be a subset of size $n_1$, $n_1<n$, taken from $N$, $S^c\doteq N\smallsetminus S$, the complement of $S$ with respect to $N$. The subset of observations $\CZ_S\doteq\{Z_i\colon i\in S\}$ is called a training set, used for constructing a learning rule. The test set contains all data that do not belong in $\CZ_S$; it is defined as $\CZ_{S^c}\doteq\CZ_N\smallsetminus \CZ_S$, the complement of $\CZ_S$ with respect to the data universe $\CZ_N$. Let $n_2$ denote the number of elements in the test set, $n_2=n-n_1$, $n_2<n$.

Now let $L\colon \RR^p\times\RR\to\RR$ be a function, assume that $Y$ is a target variable, and $\h{f}(x)$ is a decision rule. The function $L\big(\h{f}(X),Y\big)$ that measures the error between the target variable and the decision rule is called a loss function. Examples of loss functions widely used in the literature, and in the present paper, include the squared error loss function, absolute error loss, and 0/1 loss function used in classification.

The {\it generalization error} of an algorithm is defined as
\[
\mu^{(n)}\doteq\E[L(\CZ_N,Z)],
\]
where $Z$ is an independent copy of the data $Z_i$ and the expectation is taken over everything that is random. That is, we take into account the variability in both, training and test set. Furthermore, let $S_j$, $j=1,2,\ldots,J$, be random sets such that
\[
\CS=\CS_{n,n_1}\doteq\{S\colon S\subset N=\{1,\ldots,n\}, \ \card(S)=n_1\},
\]
where $\card(\CS)={n\choose n_1}$; $S_j$, $j=1,2,\ldots,J$, are {\it uniformly distributed} on $\CS$, such that each $S_j$ is independently sampled, with corresponding complement set (with respect $N$) $S_j^c$. The number $J$ is called the {\it resampling size}. If $\CZ_{S_j}$ is defined similarly as above for all $j$, the usual {\it average test set error} is
\[
\h{\mu}_j\doteq\frac{1}{n_2}\sum_{i\in S_j^c}L(\CZ_{S_j},i),
\]
and is a function of both the training set $S_j$ and the test set $S_j^c$. \citet{NB2003} defined the {\it random cross validation estimator of the generalization error} as
\begin{equation}
\label{eq.mu_J}
\h{\mu}_{{\rm CV},J}\doteq\frac{1}{J}\sum_{j=1}^J\h{\mu}_j.
\end{equation}
By a straightforward calculation, it follows that
\begin{equation}
\label{eq.Var(mu_J).0}
\Var(\h{\mu}_{{\rm CV},J})=\frac{1}{J^2}\sum\limits_{j=1}^{J}\Var(\h{\mu}_j)+\frac{2}{J^2}\mathop{\sum\sum}\limits_{1\le{j}<j'\le{J}}\Cov(\h{\mu}_j,\h{\mu}_{j'}).
\end{equation}

In the cases where the prediction rule takes real values, \citet{Efron1986} presented a wide class of loss functions, called $q$-class. Specifically, if $q$ is an absolutely continuous and concave real function, the corresponding $q$-class loss function is
\[
^qL(\h{\mu},y)\doteq q(\h{\mu})+q'(\h{\mu})(y-\h{\mu})-q(y),
\]
where $q'(\cdot)$ is the almost sure derivative of the {\it generator} $q$. Commonly used loss functions, such as squared error loss and 0/1 loss belong to this class. In Section \ref{sec:training/test} we use the $q$-class of loss functions to elucidate the training sample size selection rules.

\section[The resampling size ${J}$]{The resampling size $\bm{J}$}
\label{sec:J}
We now provide guidance for the selection of the resampling size $J$ entering the construction of the random cross validation estimator of the generalization error. In Subsection \ref{ssec:res-eff} we discuss two novel methods for selecting the resampling size $J$, the $\pi$-effectiveness and the $r$-reduction methods. Here, we note that the $\pi$-effectiveness and $r$-reduction, and hence the selection of $J$, are affected by the training set sample size. We illustrate these relationships in Section \ref{sec:simu}.

We first improve upon the following result given in \citet{NB2003}. Under the assumption that ``the distribution of $L(\overline{X}_{S_j},X_i)$ does not depend on the particular realization of $S_j$, $i$'' \citet{NB2003} prove that the quantities $\Var(\h{\mu}_j)$ and $\Cov(\h{\mu}_j,\h{\mu}_{j'})$ in \eqref{eq.Var(mu_J).0} do not  depend on the indices $j$ and $j'$ \citep[see, also,][]{MTBH2005}. In what follows we first prove that the aforementioned statement is generally true, that is this assumption is not needed for the result to hold. This is shown in Proposition \ref{prop.v,c} of this section.

\begin{proposition}
\label{prop.v,c}
Let $n_1$ and $n_2$ be fixed, and let $L$ be a loss function such that $\E[L^2(\CZ_{S_j},i)]$ is finite for each realization of $S_j$ and $i$, where $S_j$ follow a uniform distribution (described in detail in the proof of the proposition). Then, the quantities $\Var(\h{\mu}_j)$ and $\Cov(\h{\mu}_j,\h{\mu}_{j'})$ are finite and do not depend on the indices $j$ and $j'$, for both random and $k$-fold cross validation.
\end{proposition}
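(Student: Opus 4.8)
The plan is to show that both $\Var(\h{\mu}_j)$ and $\Cov(\h{\mu}_j,\h{\mu}_{j'})$ are determined by the joint law of the resampling sets together with the common law of the i.i.d.\ data, and that this determination is insensitive to the indices because of two symmetries: the exchangeability of $Z_1,\dots,Z_n$ and the symmetry of the set-sampling scheme. First I would write $\h{\mu}_j=h(S_j,\CZ_N)$, where $h(s,\CZ_N)\doteq\frac1{n_2}\sum_{i\in s^c}L(\CZ_s,i)$ is a fixed measurable function (the same for every $j$), and note that the family $S_1,\dots,S_J$ is sampled independently of $\CZ_N$. Since the training argument $\CZ_s$ enters as an unordered set, $h(s,\cdot)$ is automatically a symmetric function of the training observations. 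The claim then reduces to two distributional facts: (a) the $\h{\mu}_j$ are identically distributed, giving index-independence of the variance; and (b) for $j\neq j'$ the pairs $(\h{\mu}_j,\h{\mu}_{j'})$ are identically distributed, giving index-independence of the covariance.

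Finiteness is handled first and is routine. Conditioning on $S_j=s$ and using $\big(\sum_{i\in s^c}a_i\big)^2\le n_2\sum_{i\in s^c}a_i^2$, the hypothesis $\E[L^2(\CZ_s,i)]<\infty$ gives $\E[\h{\mu}_j^2\mid S_j=s]\le\frac1{n_2}\sum_{i\in s^c}\E[L^2(\CZ_s,i)]<\infty$; averaging over the finitely many values of $S_j$ yields $\E[\h{\mu}_j^2]<\infty$, hence $\Var(\h{\mu}_j)<\infty$, and $|\Cov(\h{\mu}_j,\h{\mu}_{j'})|\le\sqrt{\Var(\h{\mu}_j)\Var(\h{\mu}_{j'})}<\infty$ by Cauchy--Schwarz.

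For the distributional core I would exploit exchangeability. Relabeling the data indices by any permutation $\pi$ of $N$ leaves the joint law of $Z_1,\dots,Z_n$ unchanged, and because $\CZ_s$ enters $h$ as an unordered set this yields $h(s,\CZ_N)\stackrel{d}{=}h(\pi(s),\CZ_N)$ for every $\pi$; hence for every fixed $s\in\CS$ the variable $h(s,\CZ_N)$ has one common law, say $V$, not depending on $s$. Since $S_j$ is independent of $\CZ_N$, conditioning on $S_j$ gives $\h{\mu}_j\stackrel{d}{=}V$ for every $j$ in both schemes, proving (a). For (b) the same argument shows that $\E[h(s,\CZ_N)\,h(s',\CZ_N)]$ depends on $(s,s')$ only through the overlap $m=\card(s\cap s')$, since two pairs with the same $m$ have Venn configurations of identical region sizes and are therefore linked by some permutation of $N$; write this common value as $W_m$. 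Then $\E[\h{\mu}_j\h{\mu}_{j'}]=\E\big[W_{\card(S_j\cap S_{j'})}\big]$, so the cross moment depends on the joint law of $(S_j,S_{j'})$ only through the distribution of $\card(S_j\cap S_{j'})$.

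It remains to check that this distribution is pair-independent in each scheme. For random cross validation $S_j$ and $S_{j'}$ are i.i.d.\ uniform on $\CS$, so $(S_j,S_{j'})$, and a fortiori $\card(S_j\cap S_{j'})$, has the same law for every $j\neq j'$. For $k$-fold cross validation (with $n_2$ fixed, so all folds have equal size) the test blocks $S_j^c,S_{j'}^c$ are two distinct blocks of a uniformly random balanced partition of $N$; invariance under permutations of the fold labels makes the joint law of $(S_j,S_{j'})$ the same for all pairs, and in fact $\card(S_j\cap S_{j'})=n_1-n_2$ is deterministic. In both cases $\E[\h{\mu}_j\h{\mu}_{j'}]$ is pair-independent, which with (a) yields index-independence of the covariance. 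I expect the main obstacle to be the $k$-fold case: unlike random cross validation its sets $S_j$ are dependent, so one must formalize the partition scheme carefully and verify its label-exchangeability, and one must make the ``depends only on $\card(s\cap s')$'' reduction rigorous by producing the explicit index permutation that transports one configuration onto the other while respecting the set-valued (symmetric) nature of the training argument.
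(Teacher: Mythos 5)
Your proposal is correct, and its load-bearing step is the same as the paper's: condition on the realizations of the training index sets, observe that the conditional law of $(\h{\mu}_j,\h{\mu}_{j'})$ given $(S_j,S_{j'})=(A,A')$ is a fixed function of $(A,A')$ alone, and then note that the joint law of $(S_j,S_{j'})$ is identical for every pair of distinct indices (uniform on $\CS\times\CS$ in random CV, uniform on ordered pairs of sets with disjoint complements in $k$-fold CV). The paper stops there: by the total probability theorem the cdf of $(\h{\mu}_j,\h{\mu}_{j'})$ is the uniform mixture of the conditional cdfs over all admissible pairs of index sets, and since the mixture weights do not involve $j,j'$, neither does the resulting distribution. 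Where you diverge is in adding a second symmetry, exchangeability of the i.i.d.\ data $Z_1,\ldots,Z_n$, to show that the conditional law of $h(s,\CZ_N)$ is the same for every $s$ and that the conditional cross moment depends on $(s,s')$ only through $\card(s\cap s')$. That layer is not needed for the proposition -- index-independence already follows from the index-independence of the law of $(S_j,S_{j'})$, with no assumption on the data beyond existence of second moments -- so your proof is slightly longer than necessary; on the other hand it delivers more, namely the structural fact that $\E(\h{\mu}_j\h{\mu}_{j'})$ is a function of the distribution of the overlap $\card(S_j\cap S_{j'})$, which is exactly the quantity ($Y$ and $Y^*$) that drives the moment approximations of $\sfv$ and $\sfc$ in Section \ref{ssec:mean}. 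Your finiteness argument via $\bigl(\sum_{i\in s^c}a_i\bigr)^2\le n_2\sum_{i\in s^c}a_i^2$ is a fine substitute for the paper's Cauchy--Schwarz bound on the covariances, and your treatment of the $k$-fold case (label-exchangeable balanced partition, deterministic overlap $n_1-n_2$) matches the paper's \eqref{eq.Y,Y*_k-fold}.
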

\begin{proof}
First, $\E[L^2(\CZ_{S_j},i)]<\infty$ gives $\Var[L(\CZ_{S_j},Y_i)]<\infty$, and an application of Cauchy--Schwarz inequality gives that $\Cov\big(L(\CZ_{S_j},i),L(\CZ_{S_j},{i'})\big)$ exists for each realization of $S_j$, $i$ and $i'$. Thus, by definition of $\h{\mu}_j$, $\Var(\h{\mu}_j)<\infty$. Again by Cauchy--Schwarz inequality we obtain that $\Cov\big(\h{\mu}_j,\h{\mu}_{j'}\big)$ exists.

In the case of random CV, the random vector $(\h{\mu}_j,\h{\mu}_{j'})^\ta$ is uniformly distributed on
\[
\CW\doteq\{(X_i,X_{i'})^\ta\colon X_i\doteq\h{\mu}_j|S_j=A_i, \ X_{i'}\doteq\h{\mu}_{j'}|S_j=A_{i'}, \ A_i,A_{i'}\in\CS\},
\]
with $\card(\CW)={n\choose n_1}^2$. Using the total probability theorem, the cumulative distribution function of $(\h{\mu}_{j},\h{\mu}_{j'})^\ta$ is
\[
F_{\h{\mu}_{j},\h{\mu}_{j'}}(x,x')
={n\choose n_1}^{-2}\mathop{\sum\sum}_{(X_i,X_{i'})^\ta\in\CW}F_{X_i,X_{i'}}(x,x')
\doteq F(x,x').
\]
From the form of $F$ we obtain $(\h{\mu}_{j},\h{\mu}_{j'})^\ta\stackrel{\mathrm{d}}{=}(\h{\mu}_{j'},\h{\mu}_{j})^\ta$; and their distribution does not depend on the indices $j$ and $j'$.

In the $k$-fold CV case set
\[
\CW^*\doteq\{(X_i,X_{i'})^\ta\colon X_i\doteq\h{\mu}_j|S_j=A_i, \ X_{i'}\doteq\h{\mu}_{j'}|S_j=A_{i'}, \ A_i,A_{i'}\in\CS
             \ \textrm{with} \ A_i^c\cap A_{i'}^c=\varnothing\},
\]
with $\card(\CW^*)=n!\big/\big\{[(n/k)!]^2(n-2n/k)!\big\}$, and observe that the random vector $(\h{\mu}_j,\h{\mu}_{j'})^\ta$ is uniformly distributed on $\CW^*$. Using the same arguments as above the proof is completed.
\hfill$\square$
\end{proof}

Proposition \ref{prop.v,c} allows one to write $\Var(\h{\mu}_j)=\sfv$, $\Cov(\h{\mu}_j,\h{\mu}_{j'})=\sfc$, effectively obtaining
\begin{equation}
\label{eq.Var(mu_J)}
\Var(\h{\mu}_{{\rm CV},J})=\frac{\sfv-\sfc}{J}+\sfc.
\end{equation}
That is, the variance of the random cross validation estimator of the generalization error is a function of the variance of the average test set error and the covariance between two different average test set errors, which are constants with respect to $j$, $j'$. Figure \ref{fig.J,var(mu_J)} shows the behavior of $\Var(\h{\mu}_{{\rm CV},J})$ as a function of $J$. By Cauchy-Schwarz inequality we obtain that $\sfv-\sfc\ge0$, where the equality characterizes the trivial cases in which $L$ is a constant with probability 1, e.g., if $F$ is degenerate. So (see Figure \ref{fig.J,var(mu_J)}),
\begin{equation}
\label{eq.Var(mu_J)2}
\Var(\h{\mu}_{{\rm CV},J})\searrow \sfc\ge0,\quad\textrm{as} \ J\to\infty,
\end{equation}
and again the equality characterizes the same trivial cases.
\begin{figure}[htp]
\centering{\includegraphics{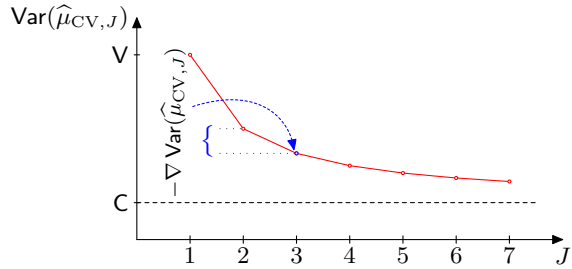}}
\caption{$\big(J,\Var(\h{\mu}_{{\rm CV},J})\big)$-plot for $J=1,\ldots,7$, for the non-trivial case in which $0<\sfc<\sfv$. $\nabla g(J)=g(J)-g(J-1)$, for a general function $g$.}
\label{fig.J,var(mu_J)}
\end{figure}

From \eqref{eq.Var(mu_J)} and \eqref{eq.Var(mu_J)2} we obtain the following important result.
\begin{theorem}
\label{theo.var_bound}
The variance of $\h{\mu}_{{\rm CV},J}$ given by \eqref{eq.mu_J} satisfies the following double inequality:
\[
\max\left\{\sfc,\frac{\sfv}{J}\right\}\le\Var(\h{\mu}_{{\rm CV},J})\le\sfv,
\]
where $\sfv=\Var(\h{\mu}_j)$ and $\sfc=\Cov(\h{\mu}_j,\h{\mu}_{j'})$.
\end{theorem}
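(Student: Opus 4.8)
The plan is to treat the exact identity \eqref{eq.Var(mu_J)}, namely $\Var(\h{\mu}_{{\rm CV},J})=\frac{\sfv-\sfc}{J}+\sfc$, as the single engine of the proof, so that the whole statement reduces to sign bookkeeping on the two constants $\sfv$ and $\sfc$. Before doing anything else I would record the two nonnegativity facts already secured in the excerpt: first, $\sfv-\sfc\ge0$ by the Cauchy--Schwarz inequality, and second, $\sfc\ge0$, which follows because $\sfc$ is the limit of the nonnegative quantities $\Var(\h{\mu}_{{\rm CV},J})$ as $J\to\infty$ in \eqref{eq.Var(mu_J)2}. Keeping both inequalities available from the start is essentially all the planning the proof requires, since each of the three bounds in the display leans on a different combination of them.

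For the upper bound I would argue that, since $J\ge1$ and the numerator $\sfv-\sfc$ is nonnegative, the term $\frac{\sfv-\sfc}{J}$ can only shrink relative to $\sfv-\sfc$; substituting this into \eqref{eq.Var(mu_J)} gives $\Var(\h{\mu}_{{\rm CV},J})\le(\sfv-\sfc)+\sfc=\sfv$. (Equivalently, the right-hand side of \eqref{eq.Var(mu_J)} is nonincreasing in $J$ and equals $\sfv$ at $J=1$.) The first half of the lower bound, $\Var(\h{\mu}_{{\rm CV},J})\ge\sfc$, is then immediate from the same identity, because the added term $\frac{\sfv-\sfc}{J}$ is nonnegative.

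The only mildly delicate piece is the second half of the lower bound, $\Var(\h{\mu}_{{\rm CV},J})\ge\frac{\sfv}{J}$. Here I would subtract $\frac{\sfv}{J}$ from \eqref{eq.Var(mu_J)} and simplify the difference to $\sfc\,\frac{J-1}{J}$; since $\frac{J-1}{J}\ge0$ for every $J\ge1$, this is nonnegative precisely because $\sfc\ge0$. This is the one step where the sign of $\sfc$, rather than the sign of $\sfv-\sfc$, is what matters, so it is worth flagging as the place where the limiting argument of \eqref{eq.Var(mu_J)2} is genuinely used. Taking the maximum of the two lower bounds and combining with the upper bound yields the displayed double inequality. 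I do not anticipate any substantive obstacle: once \eqref{eq.Var(mu_J)} is in hand the argument is entirely elementary, and the main thing to get right is the bookkeeping of which nonnegativity fact justifies which inequality.
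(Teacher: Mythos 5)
Your proof is correct and rests on exactly the same ingredients as the paper's: the identity \eqref{eq.Var(mu_J)} together with the two facts $0\le\sfc\le\sfv$. The paper merely packages the same sign bookkeeping differently, dividing through by $\sfv$ to write $\Var(\h{\mu}_{{\rm CV},J})=\bigl(\tfrac1J+\tfrac{J-1}{J}\rho\bigr)\sfv$ with $\rho=\sfc/\sfv\in[0,1]$ and invoking monotonicity in $\rho$, whereas you work directly on the identity (which has the small advantage of not dividing by $\sfv$ in the degenerate case).
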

\begin{proof}
Using Cauchy--Schwarz inequality and \eqref{eq.Var(mu_J)2} we get $0\le\rho\le1$, where $\rho=\sfc/\sfv=\Corr(\h{\mu}_j,\h{\mu}_{j'})$. Also, \eqref{eq.Var(mu_J)} gives $\Var(\h{\mu}_{{\rm CV},J})=\left(\frac{1}{J}+\frac{J-1}{J}\rho\right)\sfv$. Finally, for each $J$ fixed, the function $f(\rho)=\frac{1}{J}+\frac{J-1}{J}\rho$, $0\le\rho\le1$, is increasing, and in view of \eqref{eq.Var(mu_J)2} the proof is completed.
\hfill$\square$
\end{proof}

The importance of the theorem will become obvious when we define appropriate rules for selecting the sample size of the training set. Since the variance of the CV estimator of the generalization error is bounded above by the variance of the test error $\sfv$, we can use $\sfv$ to create an ``optimal'' rule for selecting the training (and hence the test) sample size.

\subsection{The $\pi$-effectiveness and $r$-reduction acceptable resampling size.}
\label{ssec:res-eff}
We discuss two novel methods for selecting the resampling size $J$.

The quantities $\sfv$ and $\sfc$ in \eqref{eq.Var(mu_J)} depend on the sizes of the training and test set, $n_1$ and $n_2$, as well as on $F$ and $L$. Recall that an experimentalist selects that value of $J$ for which there is no appreciable reduction in the variance of CV estimator of the generalization error. Taking into account the behavior of $\Var(\h{\mu}_{{\rm CV},J})$ we give the following definition.

\begin{definition}
\label{defi.re,rr}
\rm
We define:
\item{(a)} The {\it resampling effectiveness} of $\h{\mu}_{{\rm CV},J}$ by  $\re(\h{\mu}_{{\rm CV},J})\doteq\frac{\sfc}{\Var(\h{\mu}_{{\rm CV},J})}$.
\item{(b)} The {\it reduction ratio} of $\h{\mu}_{{\rm CV},J}$ by $\rr(\h{\mu}_{{\rm CV},J})\doteq\frac{-\nabla\Var(\h{\mu}_{{\rm CV},J})}{\Var(\h{\mu}_{{\rm CV},J})}$, where $\nabla$ denote the backward difference with respect to $J$.
\end{definition}

Notice that while $\re(\h{\mu}_{{\rm CV},J})$ provides a comparison of the variance of $\h{\mu}_{{\rm CV},J}$ for a given $J$ with the limiting variance $\sfc$, the $\rr(\h{\mu}_{{\rm CV},J})$ provides a comparison between the two variances for given $J$ and $J-1$. In this sense $\rr(\h{\mu}_{{\rm CV},J})$ is a local measure of change in variance that is used to obtain $J$. This is what experimentalists are observing in order to set the value of $J$.

\begin{figure}[htp]
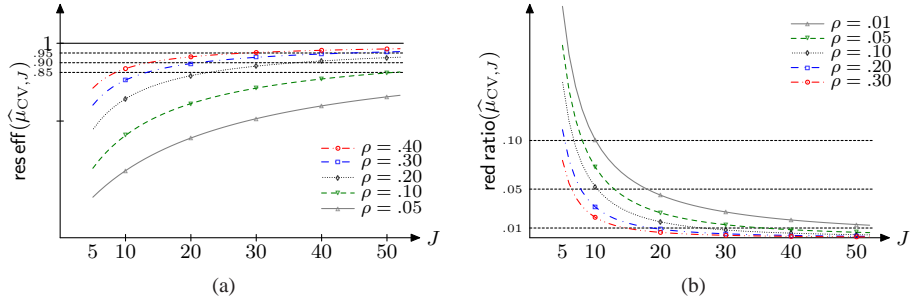

\begin{subfigure}[a]{.48\textwidth}
\includegraphics[width=1\linewidth]{Fig.21}
\caption{}
\label{sfig.ResEff}
\end{subfigure}
\hfill
\begin{subfigure}[a]{.48\textwidth}
\includegraphics[width=1\linewidth]{Fig.22}
\caption{}
\label{sfig.RedRatio}
\end{subfigure}
\caption{The behavior of: (a) the resampling effectiveness of $\h{\mu}_{{\rm CV},J}$ for $J=5,\ldots,55$ and $\rho=.05,.1,.2,.3,.4$; (b) the reduction ratio of $\h{\mu}_{{\rm CV},J}$ for $J=5,\ldots,55$ and $\rho=.01,.05,.1,.2,.3$.}
\label{fig.J-re,J-rr}
\end{figure}

Now we are in a position to give a minimum acceptable resampling size, either via the resampling effectiveness of $\h{\mu}_{{\rm CV},J}$ or via its reduction ratio. Observe that in non-trivial cases the resampling effectiveness of $\h{\mu}_{{\rm CV},J}$ takes a value in the interval $(0,1)$; for the trivial cases this is assumed to be 1. From \eqref{eq.Var(mu_J)} it follows immediately that
\begin{equation}
\label{eq.reseff}
\re(\h{\mu}_{{\rm CV},J})=\left(1+\frac{1-\rho}{\rho J}\right)^{-1},
\end{equation}
where $\rho=\sfc/\sfv=\Corr(\h{\mu}_j,\h{\mu}_{j'})$. Figure \ref{sfig.ResEff} shows the behavior of the resampling effectiveness of $\h{\mu}_{{\rm CV},J}$ for various $J,\rho$-values. Notice that when the correlation is low a larger resampling size is required in order to achieve high resampling effectiveness. As the correlation value increases the resampling size decreases; for example, when $\rho=.2$, $J=36$ is sufficient to obtain a resampling effectiveness of 90\%, while when $\rho=.3$, $J=21$ obtains the same resampling effectiveness.

If $\pi$ is the desired resampling effectiveness rate, then $\re(\h{\mu}_{{\rm CV},J})\ge\pi$, and the minimum value of the resampling number $J$ is
\begin{equation}
\label{eq.Ja}
J_{\mathsf{re}}(\pi)=\left\lfloor\frac{\pi(1-\rho)}{(1-\pi)\rho}\right\rfloor,
\end{equation}
where $\lfloor x \rfloor$ denotes the upper integer part of $x$. We call the number $J_{\mathsf{re}}(\pi)$ as the {\it $\pi$-effectiveness minimum resampling size}.

Note that, we define the resampling effectiveness (and thus select that value of $J$) as a number indicating percentage of the ratio $\sfc\big/{\Var(\h{\mu}_{{\rm CV},J})}$, which is between 0 and 1. The bigger the ratio ${\sfc}\big/{\Var(\h{\mu}_{{\rm CV},J})}$, the closer the $\Var(\h{\mu}_{{\rm CV},J})$ is to the asymptotic value $\sfc$ of the variance of the CV estimator of the generalization error. That indicates that the value of $\pi$ should be close to 1, i.e.\ .8, .9, .95 etc.

An alternative way to obtain $J$ is as follows. From \eqref{eq.Var(mu_J)} we obtain
\begin{equation}
\label{eq.redratio}
\rr(\h{\mu}_{{\rm CV},J})=\frac{1-\rho}{(J-1)+(J-1)^2\rho}.
\end{equation}
We then fix the desired reduction ratio to a value $r>0$ and require that the reduction ratio of $\h{\mu}_{{\rm CV},J}$ should not exceed this value, that is,  $\rr(\h{\mu}_{{\rm CV},J})\le r$. The minimum positive integer satisfying the preceding inequality is given by
\begin{equation}
\label{eq.Jr}
J_{\mathsf{rr}}(r)=\left\lfloor1-\frac{1}{2\rho}+\sqrt{\frac{1}{4\rho^2}+\frac{1-\rho}{\rho r}}\right\rfloor,
\end{equation}
which we call the {\it $r$-reduction ratio minimum resampling size}. We define the reduction ratio ${-\nabla\Var(\h{\mu}_{{\rm CV},J})}\big/{\Var(\h{\mu}_{{\rm CV},J})}$ (and thus select that value of $J$) as a number indicating the relative reduction of the variance of the CV estimator, which is a positive number. The smaller this ratio, the closer $\Var(\h{\mu}_{{\rm CV},J})$ is to the asymptotic value $\sfc$ of the variance of the CV estimator of the generalization error. That indicates that the value of $r$ should be close to 0, i.e.\ .1, .05, .025, .01 etc.

Figure \ref{sfig.RedRatio} shows the reduction ratio curves as a function of the correlation $\rho$. Again, the smaller the value of the correlation between $\h{\mu}_j$, $\h{\mu}_{j'}$, the larger the value of $J$ to obtain a desired variance reduction ratio.

\begin{table}[htp]
 \centering{\normalsize
 \caption{Specific values of $J_{\mathsf{re}}(\pi)$ for $\pi=.8,.85,.9,.95$ and $J_{\mathsf{rr}}(r)$ for $r=.1,.05,.025,.01$, for various values of $\rho$.}
 \label{table.Jre,Jrr}
 \begin{tabular}
 {@{\hspace{0ex}}l@{\hspace{7ex}}c@{\hspace{1ex}}l@{\hspace{5.5ex}}c@{\hspace{5.5ex}}c@{\hspace{5.5ex}}r@{\hspace{0ex}}
                                 c@{\hspace{5.5ex}}
                                 c@{\hspace{1ex}}c@{\hspace{5.5ex}}c@{\hspace{5.5ex}}c@{\hspace{5.5ex}}r@{\hspace{0ex}}}
 \addlinespace
 \toprule
  & & \multicolumn{4}{c}{$\bm{J_{\mathsf{re}}(\pi)}$} & & & \multicolumn{4}{c}{$\bm{J_{\mathsf{rr}}(r)}$}\\
    \cline{2-6} \cline{8-12}
    $~\rho$ & $\pi:$ & .8 & .85 & .9 & .95 & & $r:$ & .1 & .05 & .025 & .01 \\
  \hline
 .2 & & 16 & 23 & 36 & 76 & & & 6 & 8 & 12 & 19 \\
 .3 & & 10 & 14 & 21 & 45 & & & 5 & 7 & 10 & 15 \\
 .4 & & \hspace{1.15ex}6 & \hspace{1.15ex}9 & 14 & 29 & & & 4 & 6 & \hspace{1.15ex}8 & 13 \\
 .5 & & \hspace{1.15ex}4 & \hspace{1.15ex}6 & \hspace{1.15ex}9 & 19 & & & 4 & 5 & \hspace{1.15ex}7 & 11 \\
 .6 & & \hspace{1.15ex}3 & \hspace{1.15ex}4 & \hspace{1.15ex}6 & 13 & & & 3 & 4 & \hspace{1.15ex}6 &  9 \\
 .7 & & \hspace{1.15ex}2 & \hspace{1.15ex}3 & \hspace{1.15ex}4 & \hspace{1.15ex}9 & & & 3 & 4 & \hspace{1.15ex}5 &  7 \\
 \bottomrule
 \end{tabular}}
 \end{table}

Table \ref{table.Jre,Jrr} contains the specific values of $J_{\mathsf{re}}(\pi)$ and $J_{\mathsf{rr}}(r)$ for various values of $\pi$, $r$ and $\rho$. Notice that large values of $J$ are required when the correlation is small and the desired resampling effectiveness and reduction ratio are respectively large or small. Furthermore, the smaller the reduction ratio the larger the value of $J$, for any given correlation $\rho$. That is, when $\rho=.3$ and the desired $r=.01$, one obtains $J=15$. This selection corresponds to resampling effectiveness of approximately $86.5\%$.

\begin{remark}
\label{rem.est-v,c}
Equations \eqref{eq.Ja}, \eqref{eq.Jr} depend on $\rho$ that is generally unknown. This parameter needs to be estimated from the data. \citet{NB2003} propose an approximation of the correlation coefficient given by $\h{\rho}=n_2/n$, where $n_2$ is the cardinality of the test set. This estimator, however, may overestimate or underestimate the true correlation. \citet{MTBH2005} suggested moment approximation estimators of $\sfv$ and $\sfc$ in certain cases that may be used to estimate the correlation $\rho$, and hence obtain the values $J_{\mathsf{re}}(\pi)$ and $J_{\mathsf{rr}}(r)$.  Additionally, if the loss function belongs in Efron's $q$-class of loss functions, then the parameters can be easily estimated. See Sub-subsection \ref{sssec:q-class} for details.
\end{remark}

\section{Optimal choice of training set size}
\label{sec:training/test}

A point that relates to the design of the random cross validation estimator of the generalization error is the selection of the training set. Practitioners select the training sample size $n_1$ to be 5 or even 10 times larger than the test size $n_2$. Our results indicate that when ``optimality'' is quantified via minimization of the variance of the average test set error, the training sample size, for fixed value of $n$, equals the test set sample size. In what follows, we present and justify our choice of the optimality rule and apply it in Subsections \ref{ssec:mean} and \ref{ssec:regression}.

Relation \eqref{eq.Var(mu_J)} leads to the following two potential rules. The first rule, suggesting minimization of the limiting variance $\sfc$, is not useful because, as we will see in Section \ref{sec:simu}, $\sfc$ is a constant. The second rule suggests that minimization of the $\Var(\h{\mu}_{{\rm CV},J})$ is equivalent to minimizing the variance $\sfv$ of $\h{\mu}_j$ (this is equivalent to minimizing $\sfv-\sfc$). In light of Theorem \ref{theo.var_bound}, the variance of $\h{\mu}_{{\rm CV},J}$ is bounded above by $\sfv$ and places a small upper bound on the limiting variance $\sfc$. These observations lead us to select quantifying ``optimality'' via this rule. Therefore, for fixed $n$, we define the {\it optimal value of the training set size} by
\begin{equation}
\label{eq.opt-n1}
n_1^{\rm opt}\doteq\arg\min_{n_1\ge \frac{n}2}\{\sfv\}.
\end{equation}

The situation is different for the $k$-fold CV. Because in $k$-fold CV there is dependence between $k$, $n_1$ and $n_2$, for fixed $n$, the aforementioned optimization rules are not useful. We propose selecting the optimal value of $k$ as
\begin{equation}
\label{eq.opt-k-fold}
k^{\rm opt}\doteq\arg\min_{k}\{\Var(\h{\mu}_{k\textrm{-fold}})\}.
\end{equation}

To be able to work with the optimization rules given in \eqref{eq.opt-n1}, \eqref{eq.opt-k-fold} we need either the exact or an approximate form, of the variances involved. In what follows we consider three kinds of problems to cover a good range of possible applications. These correspond to using the sample mean or linear regression (with and without normality of the errors) as decision rules and classification via logistic regression.

\subsection{The case of sample mean}
\label{ssec:mean}

We begin by studying the simple case where the decision rule is the sample mean. The variance formulas we use in our rules, and the conditions under which they are obtained, are given in \citet[pp.~1131,1138]{MTBH2005}. Here, for reasons of completeness, we briefly summarize the results we use.

Let $Y=\card(S_j\cap S_{j'})$ and $Y^*=\card(S_j^c\cap S_{j'}^c)$. \citet[Theo.~3.1]{MTBH2005} gave the following moment approximations of $\sfv$, $\sfc$
\begin{equation}
\label{eq.v,c}
\sfv=\frac{\alpha}{n_1}+\frac{\beta}{n_2}+\frac{\gamma+\delta}{n_1n_2}+O(\sfrac{1}{n_1^2}),
\
\sfc=\frac{\alpha\E(Y)}{n_1^2}+\frac{\E(Y^*)}{n_2^2}\left[\beta+\frac{\gamma\E(Y)}{n_1^2}+\frac{\delta}{n_1}\right]+O(\sfrac{1}{n_1^2}),
\end{equation}
where
\begin{equation}
\label{eq.a,b,c,d}
\begin{split}
\alpha&\doteq\sigma^2\E^2[L'_\mu(X)],
\qquad\qquad
~\beta\doteq\Var[L_\mu(X)],\\
\gamma&\doteq\sigma^2\Var[L'_\mu(X)],
\qquad\qquad
\delta\doteq\sigma^2\Cov[L_\mu(X),L''_\mu(X)],
\end{split}
\end{equation}
with $L^{(i)}_\mu(x)\equiv \frac{\ud^i}{\ud u^i}L(u,x)\big|_{u=\mu}$, $i=0,1,2$.

Note that, since $n_1\ge n/2$, the notation $O(\sfrac{1}{n_1^2})$ has the same meaning with the notation $O(\sfrac{1}{n^2})$, which will be used in the sequel.

In random CV each of $Y$ and $Y^*$ follows a hypergeometric distribution, see \citet[Lemmas 3.1, 3.2]{MTBH2005}. Specifically, $\E(Y)=n_1^2/n$ and $\E(Y^*)=n_2^2/n$; thus, the approximated covariance, $\sfc$, in this case is
\begin{equation}
\label{eq.c-rCV}
\sfc\simeq\frac{\alpha+\beta}{n}+\frac{\gamma}{n^2}+\frac{\delta}{n_1n}.
\end{equation}
In $k$-fold case $Y$ and $Y^*$ are constants; specifically,
\begin{equation}
\label{eq.Y,Y*_k-fold}
Y=\frac{(k-2)n}{k} \quad \textrm{and} \quad Y^*=0.
\end{equation}

\subsubsection{Training set sample size optimality in Random CV}
\label{sssec:mean.usual}
To solve the optimization problem \eqref{eq.opt-n1} we consider the expression for the variance of $\h{\mu}_j$ given by \eqref{eq.v,c}. When $n$ is sufficiently large, the term $O(\sfrac{1}{n^2})$ is a negligible quantity, and using the fact that $n_2=n-n_1$, and the identity $n/[n_1(n-n_1)]=1/n_1+1/(n-n_1)$, \eqref{eq.v,c} becomes
\begin{equation}
\label{eq.appr-V}
\sfv\simeq\frac{A}{n_1}+\frac{B}{n-n_1},
\quad\textrm{where} \
A\doteq\alpha+\frac{\gamma+\delta}{n}, \ B\doteq\beta+\frac{\gamma+\delta}{n}.
\end{equation}
Furthermore, by definition $\beta>0$; hence, as $n$ becomes large the parameter $B$ takes positive values. Under this observation,
using the approximation formula \eqref{eq.appr-V} the solution to the optimization problem in \eqref{eq.opt-n1} is given by Theorem \ref{theo.opt-n_1}.
\begin{theorem}
\label{theo.opt-n_1}
The solution of the optimization problem \eqref{eq.opt-n1} when $\sfv$ is given by \eqref{eq.appr-V} is
\[
n_1^{\rm opt}=
\left\{
\begin{array}{c@{\hspace{3ex}if\hspace{2ex}}c}
\lfloor n/2 \rfloor, & A\le B,\\
\min\left\{n-1,\left\lfloor\frac{\sqrt{A}}{\sqrt{A}+\sqrt{B}}n\right\rceil\right\}, & A>B,\\
\end{array}
\right.
\]
where $A$, $B$ are defined by \eqref{eq.a,b,c,d} and \eqref{eq.appr-V}, and $\lfloor x \rceil$ stands for the nearest integer of $x$.
\end{theorem}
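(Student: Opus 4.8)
The plan is to relax the integer constraint, minimize the smooth function $g(x) \doteq \frac{A}{x} + \frac{B}{n-x}$ over the continuous interval $[n/2, n)$, and then restore integrality at the very end. First I would record the sign information I need: as noted just before the statement, $B = \beta + (\gamma+\delta)/n > 0$ for large $n$, and $A = \alpha + (\gamma+\delta)/n \to \alpha \ge 0$, so I may work in the regime $A \ge 0$, $B > 0$. This is exactly what the convexity argument requires.

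Next I would differentiate: $g'(x) = -A/x^2 + B/(n-x)^2$ and $g''(x) = 2A/x^3 + 2B/(n-x)^3$. Since $x \in (0,n)$ with $A \ge 0$ and $B > 0$, we have $g''(x) > 0$, so $g$ is strictly convex on $(0,n)$ and any stationary point is the unique global minimizer. Solving $g'(x) = 0$ gives $B x^2 = A(n-x)^2$, and taking positive roots (both $x$ and $n-x$ are positive) yields $\sqrt B\, x = \sqrt A\,(n-x)$, hence $x^* = \frac{\sqrt A}{\sqrt A + \sqrt B}\, n$, the interior minimizer of $g$ on $(0,n)$.

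Then I would locate $x^*$ relative to the constraint boundary $n/2$. A one-line computation shows $x^* \le n/2 \iff \sqrt A \le \sqrt B \iff A \le B$. In the case $A \le B$ we have $x^* \le n/2$, so by convexity $g$ is nondecreasing on $[n/2, n)$ and its feasible minimum is attained at the left endpoint; the smallest feasible integer is $\lceil n/2 \rceil$, i.e.\ $\lfloor n/2 \rfloor$ in the paper's convention, giving the first branch. In the case $A > B$, $x^*$ lies strictly inside $(n/2, n)$, so it is an interior minimizer and the integer solution is obtained by rounding $x^*$, subject to the feasibility cap $n_1 \le n - 1$; this produces $\min\{n-1, \lfloor x^* \rceil\}$, the second branch.

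The hard part will be the integrality step in the case $A > B$: strictly speaking the integer minimizer of a convex function is one of the two integers flanking $x^*$, and coincides with the nearest integer only when $g$ is sufficiently symmetric about $x^*$. I would therefore either (i) argue that, to the order of approximation at which \eqref{eq.appr-V} is stated, rounding to the nearest integer is the intended solution, or (ii) observe that the $\min$ with $n-1$ serves only to enforce $n_2 = n - n_1 \ge 1$ when $x^*$ rounds up to $n$ or beyond. I would also verify the boundary edge cases ($n$ odd, so that $n/2 \notin \ZZ$, and $x^*$ within $1/2$ of $n$) to confirm that the stated rounding and cap are mutually consistent.
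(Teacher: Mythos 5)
Your proposal is correct and follows essentially the same route as the paper's proof: both reduce the problem to minimizing $g(t)=\frac{A}{t}+\frac{B}{n-t}$, locate the unique critical point $t_1=\frac{\sqrt{A}}{\sqrt{A}+\sqrt{B}}\,n$, and use the resulting monotonicity (decreasing before $t_1$, increasing after) to read off the two branches, the only cosmetic difference being that you invoke convexity while the paper factors the numerator of $g'(t)$ as a quadratic and analyzes its sign. The nearest-integer subtlety you flag (the integer minimizer of a unimodal function is one of the two flanking integers, not necessarily $\lfloor t_1\rceil$) is real, but the paper's own proof passes over it in exactly the same way, so your treatment is, if anything, the more careful one.
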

\begin{proof}
Obviously, $n_1^{\rm opt}=\arg\min_{t\in\{\lfloor n/2\rfloor,\ldots,n-1\}}\{g(t)\}$, where $g(t)=\frac{A}{t}+\frac{B}{n-t}$, $0<t<n$.
If $A\le0$ the desired result becomes trivial.

We will study the case $A>0$. The derivative of $g$ is $g'(t)=\frac{Bt^2-A(n-t)^2}{t^2(n-t)^2}$, and it has  the same sign with the numerator
\[
N(t)=Bt^2-A(n-t)^2
    =[(A^{1/2}+B^{1/2})t-A^{1/2}n][(B^{1/2}-A^{1/2})t+A^{1/2}n].
\]
If $A=B$, $N(t)$ is a linear polynomial with root $t_0=n/2$, and takes negative values before $t_0$ and positive values after this. If $A\ne B$, $N(t)$ is a quadratic polynomial with two distinct roots $t_1=\frac{\sqrt{A}}{\sqrt{A}+\sqrt{B}}n\in(0,n)$ and $t_2=\frac{\sqrt{A}}{\sqrt{A}-\sqrt{B}}n$. When $0<A<B$ then $t_2<0$, and when $A>B$ then $t_2>n$; for both cases we see that $N(t)<0$ for all $t\in(0,t_1)$ and $N(t)>0$ for all $t\in(t_1,n)$. By definition of $n_1^{\rm opt}$ and the monotonicity of $g$ the proof is completed.
\hfill$\square$
\end{proof}

In practice, the numbers $A$ and $B$ (namely, the parameters $\alpha$, $\beta$, $\gamma$ and $\delta$) are unknown and must be estimated. But there are cases where the estimation of these parameters is unnecessary; i.e., when for the theoretical values $A$ and $B$ we know that $A \le B$ or $A>B$, independently of the data distribution (e.g., see the following example).

\begin{example}
\label{exm.Sq-n_1opt}
Assume that the data are from a population with finite eighth moment and the loss function is squared error, that is $L(\overline{X}_{S_j},X_i)=(X_i-\overline{X}_{S_j})^2$, $i\in S_j^c$. Then, $L_\mu(X)=(X-\mu)^2$ with derivatives $L'_\mu(X)=-2(X-\mu)$ and $L''_\mu(X)=2$. We compute $\alpha=0$, $\beta=\mu_4-\sigma^4>0$, $\gamma=4\sigma^4>0$ and $\delta=0$, where $\sigma^2=\Var(X)$ and $\mu_4=\E(X-\mu)^4$. Therefore, $A=4\sigma^4/n<\mu_4-\sigma^4+4\sigma^4/n=B$; and so, $n_1^{\rm opt}=\lfloor n/2 \rfloor$, independently of the distribution of the data.
\end{example}

\subsubsection{Optimality of $k$ in $k$-fold CV}
\label{sssec:mean.k-fold}
Using \eqref{eq.Y,Y*_k-fold}, \eqref{eq.v,c} gives $\sfv\simeq\frac{k\alpha}{(k-1)n}+\frac{k\beta}{n}+\frac{k^2(\gamma+\delta)}{(k-1)n^2}$ and $\sfc\simeq\frac{k(k-2)\alpha}{(k-1)^2n}$. Also, from \eqref{eq.Var(mu_J)} and with $J=k$, we obtain $\Var(\h{\mu}_{k\textrm{-fold}})=\frac{1}{k}\sfv+\frac{k-1}{k}\sfc$. From the preceding relations, after some algebra, we obtain
\begin{equation}
\label{eq.Var(mu-k-fold)}
\Var(\h{\mu}_{k\textrm{-fold}})\simeq\frac{\alpha+\beta}{n}+\frac{k}{k-1}\cdot\frac{\gamma+\delta}{n^2}.
\end{equation}

\begin{proposition}
\label{prop.opt-k}
The solution to optimization problem \eqref{eq.opt-k-fold} where the approximation variance of $\h{\mu}_{k\textrm{-fold}}$ is given by \eqref{eq.Var(mu-k-fold)} is
\[
k^{\rm opt}=
\left\{
\begin{array}{c@{\hspace{3ex}if\hspace{2ex}}c}
\md(n), & \gamma+\delta\le0,\\
  n,    & \gamma+\delta>0,\\
\end{array}
\right.
\]
where $\gamma$ and $\delta$ are defined by \eqref{eq.a,b,c,d}, and $\md(n)$ is the minimum divisor of $n$ that is greater than $1$.
\end{proposition}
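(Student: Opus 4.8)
The plan is to exploit the fact that in the approximate variance \eqref{eq.Var(mu-k-fold)} only the second summand depends on $k$, since the term $(\alpha+\beta)/n$ is constant. Hence minimizing $\Var(\h{\mu}_{k\textrm{-fold}})$ over $k$ is equivalent to minimizing the single-variable expression $\frac{k}{k-1}\cdot\frac{\gamma+\delta}{n^2}$, and because $n$ is fixed and positive this reduces to minimizing $(\gamma+\delta)\,h(k)$, where $h(k)\doteq\frac{k}{k-1}$.

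First I would record the two structural facts that drive the argument. (i) Writing $h(k)=1+\frac{1}{k-1}$ shows that $h$ is strictly decreasing in $k$ for integer $k\ge2$; thus its largest value over the feasible range is attained at the smallest admissible $k$ and its smallest value at the largest admissible $k$. (ii) The feasible values of $k$ are exactly the divisors of $n$ with $2\le k\le n$: the derivation of \eqref{eq.Y,Y*_k-fold} and \eqref{eq.Var(mu-k-fold)} presumes equal fold sizes $n_2=n/k$, so $n/k$ must be a positive integer. Consequently the smallest admissible $k$ is the minimum divisor of $n$ exceeding $1$, namely $\md(n)$, and the largest admissible $k$ is $n$ itself (leave-one-out).

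The conclusion then follows by a sign analysis of $\gamma+\delta$. If $\gamma+\delta>0$, then $(\gamma+\delta)\,h(k)$ is minimized by taking $h(k)$ as small as possible, i.e.\ by the largest admissible $k$, which yields $k^{\rm opt}=n$. If $\gamma+\delta<0$, then $(\gamma+\delta)\,h(k)$ is minimized by taking $h(k)$ as large as possible, i.e.\ by the smallest admissible $k$, which yields $k^{\rm opt}=\md(n)$. The boundary case $\gamma+\delta=0$ makes the $k$-dependent term vanish, so that every admissible $k$ gives the same approximate variance; I would fold this degenerate case into the branch $\gamma+\delta\le0$ by adopting the convention that the most economical choice $\md(n)$ is selected.

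I expect the only genuine subtlety to lie not in the monotonicity computation, which is immediate, but in making the feasible set precise — that is, in justifying that $k$ ranges over the divisors of $n$ rather than over all integers in $[2,n]$, and in fixing the tie-breaking convention at $\gamma+\delta=0$. Both are matters of bookkeeping consistent with the equal-fold-size assumption underlying \eqref{eq.Var(mu-k-fold)}, rather than analytic difficulties.
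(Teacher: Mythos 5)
Your proof is correct and follows essentially the same route as the paper's (one-sentence) argument: only the term $\frac{k}{k-1}\cdot\frac{\gamma+\delta}{n^2}$ depends on $k$, and since $\frac{k}{k-1}$ is decreasing the sign of $\gamma+\delta$ dictates whether the smallest or largest admissible $k$ minimizes the variance. Your explicit identification of the feasible set as the divisors of $n$ in $[2,n]$ and your tie-breaking convention at $\gamma+\delta=0$ are useful clarifications of details the paper leaves implicit, but they do not change the substance of the argument.
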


\begin{proof}
Since the function $\frac{k}{k-1}$ decreases in $k$, the result is obtained in view of relations \eqref{eq.opt-k-fold} and \eqref{eq.Var(mu-k-fold)}.
\hfill$\square$
\end{proof}

\begin{remark}
\label{rem.opt-k}
Proposition \ref{prop.opt-k} states that if $\gamma+\delta>0$ then, the optimal value of $k$ in terms of minimization of variance of $\h{\mu}_{k\textrm{-fold}}$ is given by the leave-one-out CV (LOOCV). However, one can replace LOOCV by $k$-fold CV, for the large values of $n$ because the quantity $\frac{\gamma+\delta}{n^2}=O(\sfrac{1}{n^2})$ and the ratio $\frac{k}{k-1}$ satisfies the inequality
\[
1<\frac{n}{n-1}\le\frac{k}{k-1}\le2.
\]
Therefore, for relatively small values of $k$ the ratio $k/(k-1)$ takes values close to $1$. For example, if $k\in\{5,\ldots,11\}$ the ratio $k/(k-1)$ is between $1.25$ and $1.1$, and selecting $k$ between $5$ and $10$, i.e.\ $k=10$, guarantees that the variance of the generalization error of $\h{\mu}_{k\textrm{-fold}}$ is close to the minimum variance.
\end{remark}

In practice, the parameters $\gamma$ and $\delta$ are unknown and, usually, must be estimated. But, sometimes this estimation is unnecessary; for example, consider the squared error loss, then $\gamma+\delta=4\sigma^4>0$ (see Example \ref{exm.Sq-n_1opt}), and thus the LOOCV is proposed, independently of the distribution of the data. Example \ref{exm.Sq.k-fold} illustrates the comments in Remark \ref{rem.opt-k} for the cases in which the LOOCV is proposed by Proposition \ref{prop.opt-k}.

\begin{example}
\label{exm.Sq.k-fold}
Let the data be from $N(\mu,\sigma^2)$ distribution and the squared error loss is used. Then, $\alpha=0$, $\beta=2\sigma^4$, $\gamma=4\sigma^4$ and $\delta=0$; and thus, $\Var(\h{\mu}_{k\textrm{-fold}})=\sigma^4\left(\frac{2}{n}+\frac{4k}{(k-1)n^2}\right)$. Defining the relative efficiency of $\h{\mu}_{k\textrm{-fold}}$ as the ratio of the variance of the estimator for $k$ folds over its variance at $k^{\rm opt}=n$ folds, we have that the relative efficiency is given by $\frac{2/n+4k/[(k-1)n^2]}{2/n+4/[(n-1)n]}$. Table \ref{table.RE.Sq.Norm.k-fold} shows the specific values of this relative efficiency for various values of $n$ and $k$, indicating that the relative efficiency is a function of both, sample size and number of folds. It approaches 1 when $n\ge50$ and $k\ge5$.

\begin{table}[htp]
 \caption{Relative efficiency of $\h{\mu}_{k\textrm{-fold}}$ for squared error loss, normally distributed sample and various values of $n$ and $k$.}
 \label{table.RE.Sq.Norm.k-fold}
 \centering{\footnotesize
 \begin{tabular}
 {@{\hspace{0ex}}l@{\hspace{1.5ex}}c@{\hspace{1.5ex}}c@{\hspace{1.5ex}}c@{\hspace{1.5ex}}c@{\hspace{1.5ex}}r@{\hspace{0ex}}}
 \addlinespace
 \toprule
 \multicolumn{6}{c}{Relative efficiency of $\h{\mu}_{k\textrm{-fold}}$} \\
 \hline
     ${\ds n=24, \ k=2 \atop \ds 1.073}$
   & ${\ds n=30, \ k=2 \atop \ds 1.060}$
   & ${\ds n=30, \ k=3 \atop \ds 1.029}$
   & ${\ds n=40, \ k=2 \atop \ds 1.046}$
   & ${\ds n=40, \ k=4 \atop \ds 1.015}$
   & ${\ds n=50, \ k=2 \atop \ds 1.038}$ \\
 [2.5ex]
     ${\ds n=50, \ k=5 \atop \ds 1.009}$
   & ${\ds n=100,\ k=5 \atop \ds 1.005}$
   & ${\ds n=100,\ k=10\atop \ds 1.002}$
   & ${\ds n=150,\ k=5 \atop \ds 1.003}$
   & ${\ds n=150,\ k=10\atop \ds 1.001}$
   & ${\ds n=150,\ k=15\atop \ds 1.001}$ \\
 \bottomrule
 \end{tabular}}
 \end{table}

\end{example}

\subsubsection{Application to Efron's $q$-class of loss functions}
\label{sssec:q-class}
The results of sections \ref{sssec:mean.usual} and \ref{sssec:mean.k-fold} apply to the $q$-class of loss functions. We have
\[
^qL(\overline{X}_{S_j},X_i)=q(\overline{X}_{S_j})+q'(\overline{X}_{S_j})(X_i-\overline{X}_{S_j})-q(X_i),
\quad i\in S_j^c\quad\textrm{for all} \ j=1,\ldots,J;
\]
where $q$ is differentiable having at least up to five derivatives, and such that the fourth derivative of $^qL$ is bounded.

The main gain is that the quantities $\alpha$, $\beta$, $\gamma$ and $\delta$ (therefore $A$ and $B$ too) can be computed easily. Observe that
\[
^qL_\mu^{(i)}(x)\equiv\frac{\ud^i}{\ud\mu^i} {^qL}(\mu,x)=q^{(i+1)}(\mu)(x-\mu)-(i-1)q^{(i)}(\mu),\quad i=1,\ldots,4.
\]
Hence,
\[
\begin{array}{l@{\qquad}l}
\alpha=0, & \beta=[q'(\mu)]^2\sigma^2+\Var[q(X)]-2q'(\mu)\Cov[X,q(X)],\\
\gamma=[q''(\mu)]^2\sigma^4, &\delta=q'(\mu)q'''(\mu)\sigma^4-q'''(\mu)\Cov[X,q(X)]\sigma^2.
\end{array}
\]
\begin{remark}
\label{rem.est-a,b,c,d-q}
The function $q$ is a known function, but the parameters are unknown (unless the distribution $F$ is known). We estimate these parameters from the sample values $X_1,\ldots,X_n$ by replacing the parameters $\mu$, $\sigma^2$, $\Var[q(X)]$ and $\Cov[X,q(X)]$ by $\overline{X}=\frac{1}{n}\sum_{i=1}^{n}X_i$, $\h{\sigma}^2=\frac{1}{n-1}\sum_{i=1}^{n}(X_i-\overline{X})^2$, $\h{\Var}_q=\frac{1}{n-1}\sum_{i=1}^{n}(q(X_i)-\overline{q})^2$ and $\h{\Cov}_{X,q}=\frac{1}{n-1}\sum_{i=1}^{n}(X_i-\overline{X})[q(X_i)-\overline{q}]$ respectively, where $\overline{q}=\frac{1}{n}\sum_{i=1}^{n}q(X_i)$.
\end{remark}

\begin{remark}
\label{rem.Copt_n1opt=n/2}
Because $\alpha=0$ and $\beta>0$ for each loss function in the $q$-class, Theorem \ref{theo.opt-n_1} guarantees that $n_1^{\rm opt}=\lfloor n/2 \rfloor$, for all data distributions and all loss functions that belong in the $q$-class. Further, $\sfv^{\rm opt}\simeq2\beta/n$ and $\sfc^{\rm opt}\simeq\beta/n$. Hence, the optimal value of the correlation coefficient is $\rho^{\rm opt}\simeq1/2$. Since $\sfc$ is unaffected by the splitting, for training sample size $n_1$ we have $\rho=\sfc/\sfv=\sfc^{\rm opt}/\sfv\le\sfc^{\rm opt}/\sfv^{\rm opt}=1/2$. Therefore, the choice $n_1=n_1^{\rm opt}$ leads to the optimized values of resampling effectiveness and reduction ratio of $\h{\mu}_{{\rm CV},J}$, see Figure \ref{fig.J-re,J-rr}.
\end{remark}

\subsection{The regression case}
\label{ssec:regression}

In the regression case we solve the problem of optimal training sample size identification under squared error loss. Furthermore, we offer a solution under both, normality of the errors and under relaxation of the normality assumption. In the second case we require $\E(|\varepsilon_i|^{4+\epsilon})<\infty$ for some $\epsilon>0$. The results presented below are different from the results presented in \citet{MTBH2005} in following aspects: 1) the analysis presented in \citet{MTBH2005} is conditional on the training sample $S_j$; here, we use the distribution of $S_j$ in light of Proposition \ref{prop.v,c}; 2) we study the case of $k$-fold CV; and 3) we present closed form expressions of the expectation, variance and covariance between two test set errors.

We first derive expressions for the quantities $\sfv$, $\sfc$ entering the computation of the variance of $\h{\mu}_{{\rm CV},J}$ by exploiting relation \eqref{eq.V}.

 Let $Z_i=(y_i,\bm{x}_i)$, $i=1,\ldots,n$, be random variables with  $\bm{x}_i=(1,x_{i,1},\ldots,x_{i,p-1})^\ta\in\RR^p$, $X=(\bm{x}_1,\ldots,\bm{x}_n)^\ta\in\RR^{n\times p}$ is the design matrix, $\bm{\beta}=(\beta_0,\beta_1,\ldots,\beta_{p-1})^\ta\in\RR^p$ is the parameter vector and $\bm{y}=(y_1,\ldots,y_n)^\ta$ is such that $\bm{y}=X\bm{\beta}+\bm{\varepsilon}$, $\bm{\varepsilon}=(\varepsilon_1,\ldots,\varepsilon_n)^\ta$ is a vector of errors, $\E(\bm{\varepsilon})=\bm{0}$ and $\Var(\bm{\varepsilon})=\sigma^2I$, $I$ is the $n\times n$ identity matrix. Let a training set $\CZ_{S_j}$ be of size $n_1$; then, $X_{S_j}$ is the $n_1\times p$ matrix according to $S_j$ and $\h{\bm{\beta}}_{S_j}$ indicates the estimator of ${\bm\beta}$ computed by using the data in the training set $\CZ_{S_j}$.

{\bf Assumption:} If $S_j$ is the index set of a training set with $n_1$ indices, then
\begin{equation}
\label{eq.V}
\lim_{n_1\to\infty}\frac{1}{n_1}X_{S_j}^\ta X_{S_j}=V^{-1},
\end{equation}
where $V^{-1}$ is finite and positive definite.

\begin{remark}
\label{rem.Noether}
In the aforementioned formulation $\bm{x}_i\sim F_{\bm{X}}$ with mean vector $\bm{\mu}_{\bm{X}}\in\RR^p$ and finite variance-covariance matrix $\Sig_{\bm{X}}\in\RR^{p\times p}$. Thus, if $S_j=\{i_1,\ldots,i_{n_1}\}$ is a training index set, the vectors $\bm{x}_{i_1},\ldots,\bm{x}_{i_{n_1}}$ are an independent, identically distributed collection from $F_{\bm{X}}$. Therefore, $n_1^{-1}X_{S_j}^\ta X_{S_j}=n_1^{-1}\sum_{k=1}^{n_1}\bm{x}_{i_k}\bm{x}_{i_k}^\ta$ is the usual sample estimator of $\E(\bm{X}\bm{X}^\ta)=\Sig_{\bm{X}}+\bm{\mu}_{\bm{X}}\bm{\mu}_{\bm{X}}^\ta$, which is a $p\times p$ positive definite matrix, say $V^{-1}$. The Strong Law of Large Numbers implies that $n_1^{-1}X_{S_j}^\ta X_{S_j}$ converges almost surely to $V^{-1}$, as $n_1$ tends to infinity. Therefore, \eqref{eq.V} is a natural assumption, which is generally true under the simple condition that the covariates have finite second order moments. Also, Wu's Lemma \citep[see][p.~510]{Wu1981} proves that if \eqref{eq.V} is satisfied then the following generalized Noether condition holds $\max_{1\le k \le n_1}\bm{x}_{S_j,k}^\ta\big(X_{S_j}^\ta X_{S_j}\big)^{-1}\bm{x}_{S_j,k}\to0$, as $n_1\to\infty$, where $\bm{x}_{S_j,k}^\ta$ denotes the $k$-th row of the matrix $X_{S_j}$. Notice that the fixed design case is obtained as a special case of the aforementioned framework.
\end{remark}

Under the above conditions,
\begin{equation}
\label{eq.N_p}
\sqrt{n_1}\big(\h{\bm{\beta}}_{S_j}-\bm{\beta}\big)\stackrel{\rm d}{\longrightarrow}N_p(\bm{0},\sigma^2V),\quad\textrm{as}\ \ n_1\to\infty.
\end{equation}

For each $S_j=\{i_1,\ldots,i_{n_1}\}\subset\{1,\ldots,n\}$ we define the $n_1\times n$ matrix $E_{S_j}\doteq\big(\bm{e}_{i_1}^\ta,\ldots,\bm{e}_{i_{n_1}}^\ta\big)^\ta$, where $\bm{e}_i$ is the $i$-th element of the usual basis of $\RR^n$, and the $n\times n$ diagonal matrix
\[
I_{S_j}\doteq E_{S_j}^\ta E_{S_j}=\diag(\one{1\in S_j},\ldots,\one{n\in S_j}).
\]
Observe that $\rank(I_{S_j})=\card(S_j)$ and $I_{S_j}I_{S_{j'}}=I_{S_j\cap S_{j'}}$; also, $X_{S_j}=E_{S_j} X$ and $\bm{y}_{S_j}=E_{S_j}\bm{y}$. Thus, \eqref{eq.V} is reformulated as:
\begin{equation}
\label{eq.lim.card}
[\card(S_j)]^{-1}X^\ta I_{S_j} X\to V^{-1},\quad\textrm{as} \ \card(S_j)\to\infty.
\end{equation}
Generally, the matrix $V$ is unknown; but it is estimated easily from the data by $n(X^\ta X)^{-1}$. Hereafter, we use this estimation as the true matrix $V$. In the Appendix, Table \ref{table.V} presents an illustration of the convergence of $n^{-1}(X^\ta X)$ to $V^{-1}$.

In the analysis that follows, we restrict ourselves to the case $\E(Y_i|\bm{X}_i=\bm{x}_i)$, i.e.\ the explanatory variables are treated as fixed. This formulation is known as {\it the fixed design case}.

\subsubsection{Squared error loss}
\label{sssec:sq}

The following propositions establish the expressions for $\E(\h{\mu}_j)$, $\Var(\h{\mu}_j)$ and
$\Cov(\h{\mu}_{j},\h{\mu}_{j'})$ need to obtain the variance of the CV estimator of generalization error.

\paragraph{Random CV Estimation:}
Using the above results, the following proposition offers closed form expressions for the expectation, variance and covariance of average test set errors.

\begin{proposition}
\label{prop.E,V,C-randomCV}
Under normality of errors, \eqref{eq.V} and squared error loss
\[
\begin{split}
&\E(\h{\mu}_j)=\sigma^2\left(1+\frac{p}{n_1}\right),\\
&\Var(\h{\mu}_j)=\sigma^4\left\{\frac{2}{n_2}+\frac{4p}{n_1n_2}+\frac{(3n+1)\theta}{(n-1)n_1n_2}+\frac{[2n(n_2-1)-n_1p]p}{(n-1)n_1^2n_2}\right\},\\
&\Cov(\h{\mu}_{j},\h{\mu}_{j'})=\sigma^4\left\{\frac{2}{n}+\frac{n+2n_1}{n(n-1)n_1}p+\frac{2(n+n_1(n_1-2)-1)}{(n-1)(n-2)n_1^2}\theta\right.\\
                              &\qquad\qquad\qquad\qquad\,\left.+\frac{(n-2)(n+n_1^2+2n_1n_2-1)-(n_1-1)^2}{(n-1)^2(n-2)n_1^4}(p-\theta)\right\},
\end{split}
\]
where $\theta=\sum_{i=1}^n h_{ii}^2$, $\theta\in[0,p]$, and $h_{ii}$ is the $i$-th diagonal element of the hat matrix $H=X(X^\ta X)^{-1}X^\ta$. Since $\frac{[2n(n_2-1)-n_1p]p}{(n-1)n_1^2n_2}=O(\sfrac{1}{n^2})$ and $\frac{(3n+1)\theta}{(n-1)n_1n_2}=\frac{3n\theta}{n_1n_2}+O(\sfrac{1}{n^2})$ then
\[
\Var(\h{\mu}_j)=\sigma^4\left\{\frac{2}{n_2}+\frac{4p+3\theta}{n_1n_2}\right\}+O(\sfrac{1}{n^2}).
\]
\end{proposition}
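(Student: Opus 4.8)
The plan is to condition on the (random) training index set $S_j$, use normality to reduce $\h{\mu}_j$ to a quadratic form in the error vector $\bm{\varepsilon}$, compute conditional moments by the standard Gaussian quadratic-form identities, and only at the end average over the uniform law of $S_j$ via assumption \eqref{eq.V} and combinatorial identities for the hat matrix. Throughout the design $X$ is fixed (fixed-design case), so the only sources of randomness are $\bm{\varepsilon}\sim N(\bm{0},\sigma^2I)$ and the sampling of $S_j$. First I would represent each test residual as a linear functional of $\bm{\varepsilon}$: for $i\in S_j^c$, $y_i-\bm{x}_i^\ta\h{\bm{\beta}}_{S_j}=\varepsilon_i-\bm{x}_i^\ta(X_{S_j}^\ta X_{S_j})^{-1}X_{S_j}^\ta\bm{\varepsilon}_{S_j}$, where the two pieces use disjoint, hence independent, coordinates of $\bm{\varepsilon}$ because $i\notin S_j$. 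Stacking over $i\in S_j^c$ gives $\bm{r}_{S_j^c}=M_{S_j}\bm{\varepsilon}$ with $M_{S_j}=E_{S_j^c}-X_{S_j^c}(X_{S_j}^\ta X_{S_j})^{-1}X_{S_j}^\ta E_{S_j}$, where $E_{S_j^c}$ is the selection matrix for the complement. Using $E_{S_j}E_{S_j}^\ta=I$, $E_{S_j^c}E_{S_j^c}^\ta=I$ and $E_{S_j^c}E_{S_j}^\ta=0$, a short computation gives $M_{S_j}M_{S_j}^\ta=I_{n_2}+G_{S_j}$ with $G_{S_j}\doteq X_{S_j^c}(X_{S_j}^\ta X_{S_j})^{-1}X_{S_j^c}^\ta$, so under normality $\bm{r}_{S_j^c}\sim N(\bm{0},\sigma^2(I_{n_2}+G_{S_j}))$. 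Writing $\h{\mu}_j=\frac{1}{n_2}\bm{r}_{S_j^c}^\ta\bm{r}_{S_j^c}$ and applying $\E(\bm{z}^\ta\bm{z})=\tr\Sigma$, $\Var(\bm{z}^\ta\bm{z})=2\tr\Sigma^2$ for $\bm{z}\sim N(\bm{0},\Sigma)$ yields $\E(\h{\mu}_j\mid S_j)=\sigma^2(1+n_2^{-1}\tr G_{S_j})$ and $\Var(\h{\mu}_j\mid S_j)=\frac{2\sigma^4}{n_2^2}(n_2+2\tr G_{S_j}+\tr G_{S_j}^2)$.

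For the unconditional variance I would use the law of total variance, $\Var(\h{\mu}_j)=\E_{S_j}\Var(\h{\mu}_j\mid S_j)+\Var_{S_j}\E(\h{\mu}_j\mid S_j)$, which expresses everything through $\E_{S_j}\tr G_{S_j}$, $\E_{S_j}\tr G_{S_j}^2$ and $\Var_{S_j}\tr G_{S_j}$. For the covariance I would draw $S_j,S_{j'}$ independently (random CV) while keeping the common error vector $\bm{\varepsilon}$; writing both estimators as quadratic forms $\h{\mu}_j=\frac{1}{n_2}\bm{\varepsilon}^\ta P_{S_j}\bm{\varepsilon}$ with $P_{S_j}=M_{S_j}^\ta M_{S_j}$, the conditional-mean term of the total-covariance decomposition drops out, since $\E(\h{\mu}_j\mid S_j)$ and $\E(\h{\mu}_{j'}\mid S_{j'})$ are functions of the independent sets $S_j$ and $S_{j'}$ respectively and hence uncorrelated, while the Gaussian identity $\Cov(\bm{\varepsilon}^\ta P\bm{\varepsilon},\bm{\varepsilon}^\ta Q\bm{\varepsilon})=2\sigma^4\tr(PQ)$ leaves $\Cov(\h{\mu}_j,\h{\mu}_{j'})=\frac{2\sigma^4}{n_2^2}\E_{S_j,S_{j'}}\tr(P_{S_j}P_{S_{j'}})$.

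Two tools convert these trace expectations into the stated closed forms. By assumption \eqref{eq.V}, in the estimated-matrix form $(X_{S_j}^\ta X_{S_j})^{-1}\simeq\frac{n}{n_1}(X^\ta X)^{-1}$, every training-set leverage is replaced by a full-sample hat-matrix entry, so $(G_{S_j})_{ik}\simeq\frac{n}{n_1}h_{ik}$, $\tr G_{S_j}\simeq\frac{n}{n_1}\sum_{i\in S_j^c}h_{ii}$ and $\tr G_{S_j}^2\simeq\frac{n^2}{n_1^2}\sum_{i,k\in S_j^c}h_{ik}^2$. I would then average over the uniform $S_j$ using the inclusion probabilities $\Pr(i\in S_j^c)=n_2/n$ and $\Pr(i,k\in S_j^c)=\frac{n_2(n_2-1)}{n(n-1)}$ for $i\ne k$ (with the corresponding product probabilities across the two independent sets in the covariance), together with the idempotency identities $\sum_i h_{ii}=\tr H=p$, $\sum_{i,k}h_{ik}^2=\tr H^2=\tr H=p$ and $\sum_i h_{ii}^2=\theta$, whence $\sum_{i\ne k}h_{ik}^2=p-\theta$. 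Substituting, collecting terms, and discarding contributions of order $O(\sfrac{1}{n^2})$ reproduces the displayed expectation $\sigma^2(1+p/n_1)$ and the variance; the bound $\theta\in[0,p]$ follows from $0\le h_{ii}\le1$ and $\theta=\tr(H\circ H)\le\tr H$.

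I expect the real obstacle to be the covariance. Expanding $P_{S_j}P_{S_{j'}}=M_{S_j}^\ta M_{S_j}M_{S_{j'}}^\ta M_{S_{j'}}$ and taking $\E_{S_j,S_{j'}}\tr(\cdot)$ forces an enumeration of the overlap patterns of the four index sets $S_j,S_j^c,S_{j'},S_{j'}^c$, each weighted by a distinct second-order inclusion probability, and one must retain exact (not merely leading-order) coefficients so that the rational factors such as $\frac{2(n+n_1(n_1-2)-1)}{(n-1)(n-2)n_1^2}$ and the $(p-\theta)$ term emerge correctly. The variance is analogous but lighter; its only subtlety is verifying that $\Var_{S_j}\tr G_{S_j}$ contributes only at order $O(\sfrac{1}{n^2})$ and may be folded into the remainder, after which the simplified form $\sigma^4\{2/n_2+(4p+3\theta)/(n_1n_2)\}+O(\sfrac{1}{n^2})$ is immediate.
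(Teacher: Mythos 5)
Your plan is correct and, at its core, it is the paper's own proof in different clothing. The paper establishes the exact conditional normality of the pairs $\xi_{j,i}=\bm{x}_i^\ta\h{\bm{\beta}}_{S_j}-y_i$ given the index sets, approximates their covariance matrix via \eqref{eq.V} (see \eqref{eq.xi_j,j',i,i'}), evaluates fourth moments entry by entry with Isserlis' theorem to get $\sfe_i$, $\sfe_{i,i}$, $\sfe_{i,i'}$, and averages over the uniform index sets through Theorem \ref{theo.var(mu_j)} and Lemma \ref{lem.indices}. Your trace identities $\E(\bm{z}^\ta\bm{z})=\tr\Sigma$, $\Var(\bm{z}^\ta\bm{z})=2\tr\Sigma^2$ and $\Cov(\bm{\varepsilon}^\ta P\bm{\varepsilon},\bm{\varepsilon}^\ta Q\bm{\varepsilon})=2\sigma^4\tr(PQ)$ are Isserlis in matrix form, your entry $(M_{S_j}M_{S_{j'}}^\ta)_{ii'}$ is exactly the off-diagonal element of the covariance matrix in \eqref{eq.xi_j,j',i,i'}, and your law-of-total-variance/covariance step is the paper's conditioning on $S_j$ (resp.\ $S_j,S_{j'}$) followed by Lemma \ref{lem.indices}. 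The quadratic-form packaging is genuinely tidier for the variance; for the covariance the two routes coincide, and what your "enumeration of overlap patterns" amounts to is precisely parts (f)--(k) of Lemma \ref{lem.indices}: you need not only pairwise inclusion probabilities but the joint moments $\E[\card^{r}(S_j\cap S_{j'})\one{i\in S_j^c,\,i'\in S_{j'}^c,\ldots}]$ for $r=0,1,2$, together with a second application of \eqref{eq.V} to the overlap block $X^\ta I_{S_j\cap S_{j'}}X\simeq\card(S_j\cap S_{j'})V^{-1}$. So nothing is missing in principle, only the bookkeeping you already flagged as the heavy part.

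One concrete imprecision in your last paragraph: you cannot fold $\Var_{S_j}\!\big(\tr G_{S_j}\big)$ into the $O(\sfrac{1}{n^2})$ remainder and still land on the displayed coefficient $4p+3\theta$. That term contributes $\frac{\sigma^4}{n_2^2}\Var_{S_j}(\tr G_{S_j})=\frac{\sigma^4}{n_1n_2}\big[\theta-\frac{p^2-\theta}{n-1}\big]=\frac{\sigma^4\theta}{n_1n_2}+O(\sfrac{1}{n^3})$, i.e.\ exactly one of the three units of $\frac{n\theta}{(n-1)n_1n_2}$ making up the $3\theta$ coefficient; the other two come from the diagonal and off-diagonal parts of $\E_{S_j}\tr G_{S_j}^2$. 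Discarding it would give $4p+2\theta$ — still a valid statement up to $O(\sfrac{1}{n^2})$, but not the formula in the Proposition, and not the coefficient that feeds the optimization in Proposition \ref{prop.optim-nonnormality}. Keep the between-sets variance term throughout and discard only genuine $O(\sfrac{1}{n^3})$ quantities at the end. (As a consistency check: carried out exactly, your route gives the $\theta$-coefficient $\frac{3n}{(n-1)n_1n_2}$, which agrees with the paper's own intermediate expression in its proof; the $(3n+1)$ appearing in the displayed statement differs from this by an immaterial $O(\sfrac{1}{n^3})$ amount.)
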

\begin{proof}
See Online Appendix.
\hfill$\square$
\end{proof}

We now relax the assumption of normality of errors by requiring a much weaker condition. That is, we require that there exists an $\epsilon>0$ such that $\E(|\varepsilon_i|^{4+\epsilon})<\infty$, effectively requiring the existence of moments of order $4+\epsilon$. To relax the assumption of normality of the errors we need to guarantee the convergence of the moments of the estimators of $\beta$ to the moments of the corresponding asymptotic distribution. \citet{AM2015} address this issue, as well as the associate rate of convergence.

\begin{lemma}
\label{lem.W}
Define the random vector $\bm{W}_{i,i'}=(W_i,W_{i'})^\ta$, where $W_i=\sqrt{n_1}\bm{x}_i^\ta(\h{\bm{\beta}}_{S_j}-\bm{\beta})$, $W_{i'}=\sqrt{n_1}\bm{x}_{i'}^\ta(\h{\bm{\beta}}_{S_j}-\bm{\beta})$, $S_j$ is the index set of a training set and $i,i'\in S_j^c$. Then
\[
\bm{W}_{i,i'}\stackrel{\rm d}{\longrightarrow}N_2(\bm{0},\Sig^{\bm W}_{i,i'}),\quad\textrm{as}\ \ n_1\to\infty,
\quad\textrm{where} \ \
\Sig^{\bm W}_{i,i'}
=\sigma^2
\left(
  \begin{array}{@{\hspace{0ex}}c@{\hspace{2ex}}c@{\hspace{0ex}}}
    \bm{x}_i^\ta V \bm{x}_i & \bm{x}_i^\ta V \bm{x}_{i'} \\
    \bm{x}_i^\ta V \bm{x}_{i'} & \bm{x}_{i'}^\ta V \bm{x}_{i'}
  \end{array}
\right).
\]
\end{lemma}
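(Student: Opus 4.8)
The plan is to recognize $\bm{W}_{i,i'}$ as a fixed linear image of the asymptotically normal vector $\sqrt{n_1}(\h{\bm{\beta}}_{S_j}-\bm{\beta})$ and to transport its Gaussian limit through that map. Since we work in the fixed design case, the test covariates $\bm{x}_i,\bm{x}_{i'}$ (with $i,i'\in S_j^c$) are non-random, so the $2\times p$ matrix $C\doteq(\bm{x}_i,\bm{x}_{i'})^\ta$, whose rows are $\bm{x}_i^\ta$ and $\bm{x}_{i'}^\ta$, is a constant. By the definitions of $W_i$ and $W_{i'}$ we may then write $\bm{W}_{i,i'}=C\,\sqrt{n_1}(\h{\bm{\beta}}_{S_j}-\bm{\beta})$, so that all the randomness in $\bm{W}_{i,i'}$ is carried by the centered and scaled estimator.

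Next I would invoke the asymptotic normality \eqref{eq.N_p}, namely $\sqrt{n_1}(\h{\bm{\beta}}_{S_j}-\bm{\beta})\stackrel{\rm d}{\longrightarrow}N_p(\bm{0},\sigma^2V)$, and push it through $C$ by the linear-map property of weak convergence; concretely, for every $\bm{t}\in\RR^2$,
\[
\bm{t}^\ta\bm{W}_{i,i'}=(C^\ta\bm{t})^\ta\sqrt{n_1}(\h{\bm{\beta}}_{S_j}-\bm{\beta})\stackrel{\rm d}{\longrightarrow}N\!\big(0,\;\sigma^2(C^\ta\bm{t})^\ta V(C^\ta\bm{t})\big)=N\!\big(0,\;\bm{t}^\ta(\sigma^2CVC^\ta)\bm{t}\big),
\]
and the Cram\'er--Wold device assembles these one-dimensional limits into $\bm{W}_{i,i'}\stackrel{\rm d}{\longrightarrow}N_2(\bm{0},\sigma^2CVC^\ta)$. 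It then remains to evaluate the limiting covariance. A direct multiplication gives
\[
\sigma^2CVC^\ta=\sigma^2\left(\begin{array}{@{}cc@{}}\bm{x}_i^\ta V\bm{x}_i & \bm{x}_i^\ta V\bm{x}_{i'}\\ \bm{x}_{i'}^\ta V\bm{x}_i & \bm{x}_{i'}^\ta V\bm{x}_{i'}\end{array}\right),
\]
and because $V$ is symmetric (being, by \eqref{eq.V}, the limit of the symmetric matrices $n_1(X_{S_j}^\ta X_{S_j})^{-1}$) we have $\bm{x}_{i'}^\ta V\bm{x}_i=\bm{x}_i^\ta V\bm{x}_{i'}$, so $\sigma^2CVC^\ta=\Sig^{\bm{W}}_{i,i'}$, completing the identification.

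The only point that needs genuine care---and the reason normality has been dropped in favor of a moment hypothesis---is the validity of \eqref{eq.N_p} itself. Under normality that display is exact for every finite $n_1$, whereas here it must be obtained as a bona fide central limit theorem for the least squares estimator. I would derive it from the Lindeberg--Feller theorem applied to the linear functionals $\bm{t}^\ta\sqrt{n_1}(\h{\bm{\beta}}_{S_j}-\bm{\beta})$; the Lindeberg condition is secured through the generalized Noether condition $\max_{1\le k\le n_1}\bm{x}_{S_j,k}^\ta(X_{S_j}^\ta X_{S_j})^{-1}\bm{x}_{S_j,k}\to0$, guaranteed by Wu's Lemma under assumption \eqref{eq.V} (see Remark \ref{rem.Noether}), and requires only a finite error variance. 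The stronger requirement $\E(|\varepsilon_i|^{4+\epsilon})<\infty$ is not needed for the present distributional statement; it is reserved for the subsequent propositions, where convergence of the second- and fourth-order moments of $\bm{W}_{i,i'}$ (and hence of $\Var(\h{\mu}_j)$ and $\Cov(\h{\mu}_j,\h{\mu}_{j'})$) must be upgraded from convergence in distribution by a uniform-integrability argument in the spirit of \citet{AM2015}. Thus, modulo the standard OLS central limit theorem, the distributional claim of the lemma reduces to the routine linear-map computation above.
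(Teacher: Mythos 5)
Your proposal is correct and follows essentially the same route as the paper, whose one-line proof invokes \eqref{eq.N_p} together with the delta method --- which, for the linear map $\bm{W}_{i,i'}=C\sqrt{n_1}(\h{\bm{\beta}}_{S_j}-\bm{\beta})$ with $C=(\bm{x}_i,\bm{x}_{i'})^\ta$, is exactly the Cram\'er--Wold computation you spell out. Your additional remarks on securing \eqref{eq.N_p} via Lindeberg--Feller and the Noether condition, and on why the $4+\epsilon$ moment assumption is reserved for the subsequent moment-convergence arguments, are accurate elaborations rather than a different method.
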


\begin{proof}
It follows easily by \eqref{eq.N_p} and {\it delta}-method.
\hfill$\square$
\end{proof}

The following proposition provides the needed expressions for computing the variance of the cross validation generalization error estimate and the optimal training sample size.

\begin{proposition}
\label{prop.E,V,C-randomCV-nonnormality}
Under the assumption that $\E(|\varepsilon_i|^{4+\epsilon})<\infty$, $\epsilon>0$, and in the case of squared error loss,
\[
\Var(\h{\mu}_j)=
\frac{\mu_4-\sigma^4}{n_2}+\frac{(4p+3\theta)\sigma^4}{n_1n_2}+O(\sfrac{1}{n^2}),
\]
where $\mu_4=\E(\varepsilon_i^4)$.
\end{proposition}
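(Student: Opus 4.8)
The plan is to recognize this as the non-normal counterpart of Proposition~\ref{prop.E,V,C-randomCV} and to prove it by re-running that computation while isolating the single ingredient in which Gaussianity of the errors was actually used. First I would rewrite the test-set loss through the regression model: for a training index set $S_j$ and $i\in S_j^c$, substituting $y_i=\bm{x}_i^\ta\bm{\beta}+\eps_i$ gives
\[
L(\CZ_{S_j},i)=\big(y_i-\bm{x}_i^\ta\h{\bm{\beta}}_{S_j}\big)^2
=\Big(\eps_i-\tfrac{1}{\sqrt{n_1}}W_i\Big)^2
=\eps_i^2-\frac{2}{\sqrt{n_1}}\,\eps_iW_i+\frac{1}{n_1}W_i^2,
\]
where $W_i=\sqrt{n_1}\,\bm{x}_i^\ta(\h{\bm{\beta}}_{S_j}-\bm{\beta})$ is exactly the variable of Lemma~\ref{lem.W}. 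This decomposes each loss into a pure test-error piece, a cross piece and a pure estimator piece, and then $\Var(\h{\mu}_j)=n_2^{-2}\big[\sum_{i\in S_j^c}\Var\!\big(L(\CZ_{S_j},i)\big)+\sum_{i\ne i'}\Cov\!\big(L(\CZ_{S_j},i),L(\CZ_{S_j},i')\big)\big]$.

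Second, I would exploit the fixed-design structure: the covariates $\bm{x}_i$ are constants, so every $W_i$ is a function of the training data alone, while the test errors $\{\eps_i:i\in S_j^c\}$ are i.i.d., centred, and independent of the training data and hence of all the $W_i$. Conditioning on the training set and using $\E(\eps_i)=0$, $\E(\eps_i^2)=\sigma^2$, $\E(\eps_i^4)=\mu_4$, the dominant contribution is $n_2^{-2}\sum_{i\in S_j^c}\Var(\eps_i^2)=(\mu_4-\sigma^4)/n_2$. This is the \emph{only} place the fourth moment of the error enters; under normality $\mu_4=3\sigma^4$ recovers the coefficient $2\sigma^4$ of Proposition~\ref{prop.E,V,C-randomCV}, confirming consistency. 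Moreover, every term carrying the odd moment $\E(\eps_i^3)$ (produced by the cross piece $\eps_iW_i$) drops out after averaging over the training errors, because $\E(W_i)=0$ by unbiasedness of ordinary least squares.

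Third, each of the remaining terms in the expansion of the variance and covariance is a product of a test-error moment ($\E\eps_i^2=\sigma^2$, the odd ones cancelling) with a moment of the estimator-driven quantities $W_i,W_{i'}$ — namely $\E(W_i^2)$, $\E(W_iW_{i'})$, $\E(W_i^2W_{i'}^2)$, $\E(W_i^4)$ and the like. These are precisely the quantities controlled by Lemma~\ref{lem.W}: their limits are the moments of the Gaussian vector with covariance $\Sig^{\bm W}_{i,i'}$, hence are built from $\sigma^2\bm{x}_i^\ta V\bm{x}_{i'}=\sigma^2 n\,h_{ii'}$, where $h_{ii'}$ is the $(i,i')$ entry of the hat matrix. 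Since this limiting law is identical to the one underlying the normal-error proof via \eqref{eq.N_p}, the bookkeeping that turns these moments — after summation over the index sets and averaging over the random split $S_j$, using $V=n(X^\ta X)^{-1}$, $\sum_i h_{ii}=p$ and $\sum_{i,i'}h_{ii'}^2=\tr(H)=p$ — into the coefficient $4p+3\theta$ with $\theta=\sum_{i=1}^n h_{ii}^2$ is inherited verbatim; all residual contributions collapse into $O(\sfrac{1}{n^2})$.

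The hard part will be justifying the moment passages in the previous step. Lemma~\ref{lem.W} supplies only convergence in distribution of $\bm{W}_{i,i'}$, whereas the variance formula needs convergence of its second and fourth moments to those of the Gaussian limit, and distributional convergence does not entail moment convergence without a uniform-integrability control. This is exactly why the hypothesis is strengthened from a bounded fourth moment (automatic under normality) to $\E(|\eps_i|^{4+\epsilon})<\infty$ for some $\epsilon>0$, and why the result of \citet{AM2015} must be invoked: it guarantees that the finite-sample moments of $\h{\bm{\beta}}_{S_j}$, and therefore of the $W_i$, agree with their asymptotic Gaussian counterparts up to an error that is absorbed in the $O(\sfrac{1}{n^2})$ remainder. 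Granting this, the calculation is line-for-line the normal-case computation with $3\sigma^4$ replaced by $\mu_4$, which yields the stated expression.
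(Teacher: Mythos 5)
Your proposal is correct and follows essentially the same route as the paper's proof: write the loss as $\big(\varepsilon_i-W_i/\sqrt{n_1}\big)^2$, use independence of the test errors from the training data to factor all mixed moments (so the only non-Gaussian moment surviving is $\mu_4=\E(\varepsilon_i^4)$, the odd ones vanishing since $\E(W_i)=\E(\varepsilon_i)=0$), take the $W$-moments from the Gaussian limit of Lemma \ref{lem.W} via Isserlis, and feed the resulting $\sfe_i$, $\sfe_{i,i}$, $\sfe_{i,i'}$ into Theorem \ref{theo.var(mu_j)} together with the identities \eqref{eq.C2}. Your explicit flagging of the moment-convergence (uniform integrability) gap behind Lemma \ref{lem.W}, and of why the $4+\epsilon$ moment condition and \citet{AM2015} are needed to close it, matches the justification the paper gives in the surrounding text.
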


\begin{proof}
See Online Appendix.
\hfill$\square$
\end{proof}

\paragraph{$k$-fold case:}
In the $k$-fold CV case $n_1=\frac{(k-1)n}{k}$ and $n_2=\frac{n}{k}$. Then,

\begin{proposition}
\label{prop.E,V,C-kfoldCV}
Under normality of errors, \eqref{eq.V} and squared error loss
\[
\begin{split}
&\E(\h{\mu}_j)=\sigma^2\left(1+\frac{kp}{(k-1)n}\right),\\
&\Var(\h{\mu}_j)=\sigma^4\left\{\frac{2k}{n}+\frac{4k^2p}{(k-1)n^2}+\frac{3k^2\theta}{(k-1)n^2}+\frac{pk^3}{(k-1)^2n^2}\right\}+o(\sfrac{1}{n^2})=\frac{2k\sigma^4}{n}+O(\sfrac{1}{n^2}),\\
&\Cov(\h{\mu}_{j},\h{\mu}_{j'})=\sigma^4
\left\{\frac{2k^4(p-\theta)}{(k-1)^4n(n-1)}-\frac{k^2\theta}{(k-1)^2n(n-1)}\right\}+o(\sfrac{1}{n^2})=O(\sfrac{1}{n^2}).
\end{split}
\]
\end{proposition}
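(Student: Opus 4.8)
The plan is to exploit the fact that, under normality and in the fixed-design case with squared-error loss, each average test-set error is a quadratic form in the Gaussian error vector. For fold $j$ the vector of test residuals is $\bm{r}_j=\bm{y}_{S_j^c}-X_{S_j^c}\h{\bm{\beta}}_{S_j}=M_j\bm{\varepsilon}$, where $M_j=E_{S_j^c}-B_jE_{S_j}$ with $B_j=X_{S_j^c}(X_{S_j}^\ta X_{S_j})^{-1}X_{S_j}^\ta$ and $E_{S_j},E_{S_j^c}$ the selection matrices of the excerpt (so $X_{S_j}=E_{S_j}X$). Hence $\h{\mu}_j=\frac{1}{n_2}\bm{\varepsilon}^\ta Q_j\bm{\varepsilon}$ with $Q_j=M_j^\ta M_j$, and since $\bm{\varepsilon}\sim N(\bm{0},\sigma^2 I)$ the identities $\E[\bm{\varepsilon}^\ta Q\bm{\varepsilon}]=\sigma^2\tr Q$ and $\Cov(\bm{\varepsilon}^\ta Q\bm{\varepsilon},\bm{\varepsilon}^\ta Q'\bm{\varepsilon})=2\sigma^4\tr(QQ')$ reduce the whole proposition to the three traces $\tr Q_j$, $\tr Q_j^2$ and $\tr(Q_jQ_{j'})$. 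Throughout I use \eqref{eq.V} in the form $(X_{S_j}^\ta X_{S_j})^{-1}\simeq\frac{n}{n_1}(X^\ta X)^{-1}$, so that $G_j\doteq B_jB_j^\ta=X_{S_j^c}(X_{S_j}^\ta X_{S_j})^{-1}X_{S_j^c}^\ta$ has entries $(G_j)_{ii'}\simeq\frac{n}{n_1}h_{ii'}$.

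For the expectation and variance of a single fold I would first note that one $k$-fold split has the same marginal law as a random-CV split with $n_1=\sfrac{(k-1)n}{k}$, $n_2=\sfrac{n}{k}$; thus $\E(\h{\mu}_j)$ and $\Var(\h{\mu}_j)$ follow by substituting these sizes into Proposition \ref{prop.E,V,C-randomCV}. Self-containedly, the block form $Q_j=\big(\begin{smallmatrix}I_{n_2}&-B_j\\-B_j^\ta&B_j^\ta B_j\end{smallmatrix}\big)$ gives $\tr Q_j=n_2+\tr G_j$ and $\tr Q_j^2=n_2+2\tr G_j+\tr G_j^2$, with $\tr G_j\simeq\frac{n}{n_1}\sum_{i\in S_j^c}h_{ii}$ and $\tr G_j^2\simeq\frac{n^2}{n_1^2}\sum_{i,i'\in S_j^c}h_{ii'}^2$. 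Taking expectations over the random fold (each index lies in $S_j^c$ with probability $\sfrac{n_2}{n}$, and $\E\sum_{i\in S_j^c}h_{ii}=\sfrac{n_2p}{n}$) and applying $\frac{\sigma^2}{n_2}\tr Q_j$, $\frac{2\sigma^4}{n_2^2}\tr Q_j^2$ yields $\E(\h{\mu}_j)=\sigma^2\big(1+\frac{kp}{(k-1)n}\big)$ and $\Var(\h{\mu}_j)=\frac{2k\sigma^4}{n}+O(\sfrac{1}{n^2})$, the $O(\sfrac{1}{n^2})$ part being the displayed $p,\theta$ corrections after splitting $\sum_{i,i'\in S_j^c}h_{ii'}^2$ into its diagonal and off-diagonal pieces.

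The covariance is the genuinely new computation, because the $k$-fold dependence differs from random CV: the two test folds are disjoint, $S_j^c\cap S_{j'}^c=\varnothing$ (the regime of $\CW^*$ in Proposition \ref{prop.v,c}). I would use $\tr(Q_jQ_{j'})=\|M_jM_{j'}^\ta\|_{\mathrm F}^2$ and expand $M_jM_{j'}^\ta$ into four blocks. The decisive point is that the leading block $E_{S_j^c}E_{S_{j'}^c}^\ta=0$, precisely because $S_j^c\cap S_{j'}^c=\varnothing$; this annihilates the $O(\sfrac{1}{n})$ part of the covariance and is the structural reason for $\Cov(\h{\mu}_j,\h{\mu}_{j'})=O(\sfrac{1}{n^2})$. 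Each surviving block carries a factor $(X_S^\ta X_S)^{-1}\simeq\frac{n}{n_1}(X^\ta X)^{-1}$: the two cross blocks each contribute $-\frac{n}{n_1}h_{ii'}$ (test points of one fold are training points of the other, since $S_j^c\subset S_{j'}$), while the quadratic block contributes $\frac{n^2}{n_1^2}g_{ii'}$ with $g_{ii'}=\sum_{l\in S_j\cap S_{j'}}h_{il}h_{i'l}$. Collecting entries gives $(M_jM_{j'}^\ta)_{ii'}\simeq\big(\frac{n^2}{n_1^2}-\frac{2n}{n_1}\big)h_{ii'}=\lambda(\lambda-2)\,h_{ii'}$, $\lambda=\sfrac{n}{n_1}=\sfrac{k}{(k-1)}$, so $\tr(Q_jQ_{j'})$ is a double sum of $h_{ii'}^2$ over the two folds. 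Idempotency of $H$ ($H^2=H$, hence $\sum_{i'}h_{ii'}^2=h_{ii}$ and $\sum_{i\ne i'}h_{ii'}^2=p-\theta$) together with the hypergeometric pair-probabilities for the random fold assignment then converts these fold-restricted sums into $p$ and $\theta$, producing the stated $O(\sfrac{1}{n^2})$ expression.

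The hard part is the exact hat-matrix bookkeeping in the covariance. The block cancellation is immediate and already delivers the essential claim $\Cov(\h{\mu}_j,\h{\mu}_{j'})=O(\sfrac{1}{n^2})$; pinning down the precise coefficients, however, requires keeping the subleading corrections—chiefly that $g_{ii'}$ differs from $h_{ii'}$ by sums of $h_{il}h_{i'l}$ over the excluded indices $S_j^c\cup S_{j'}^c$, and that the diagonal ($i=i'$ within a fold) and off-diagonal entries average differently over the random partition. Carrying these through the Gaussian fourth-moment algebra, rather than stopping at the crude $g_{ii'}\simeq h_{ii'}$ and $\lambda^2-2\lambda$ approximation, is what separates the clean leading statement from the full displayed formulae, and is where essentially all of the effort lies.
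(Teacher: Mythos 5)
Your quadratic-form route ($\h{\mu}_j=\frac{1}{n_2}\bm{\varepsilon}^\ta Q_j\bm{\varepsilon}$ plus Gaussian trace identities) is a genuinely different path from the paper, which instead works with the bivariate asymptotic law \eqref{eq.xi_j,j',i,i'} of the residual pair $(\xi_{j,i},\xi_{j',i'})$, Isserlis's theorem for the fourth moments, and the combinatorial probability $\E(\one{i\in S_j^c,\,i'\in S_{j'}^c})=\frac{n}{k^2(n-1)}$; your treatment of $\E(\h{\mu}_j)$ and $\Var(\h{\mu}_j)$ by substituting $n_1=\frac{(k-1)n}{k}$ into the random-CV proposition is exactly what the paper does, and your identification of $E_{S_j^c}E_{S_{j'}^c}^\ta=0$ as the structural reason for $\Cov(\h{\mu}_j,\h{\mu}_{j'})=O(\sfrac{1}{n^2})$ is the right insight. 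However, there are two concrete gaps. First, the identity $\Cov(\bm{\varepsilon}^\ta Q\bm{\varepsilon},\bm{\varepsilon}^\ta Q'\bm{\varepsilon})=2\sigma^4\tr(QQ')$ holds for \emph{fixed} $Q,Q'$, i.e.\ conditionally on the folds. Since $S_j,S_{j'}$ are random, the law of total covariance adds $\Cov\big(\tfrac{\sigma^2}{n_2}\tr Q_j,\tfrac{\sigma^2}{n_2}\tr Q_{j'}\big)$, which is the (negative, finite-population) covariance of the disjoint fold-sums $\sum_{i\in S_j^c}h_{ii}$ and $\sum_{i'\in S_{j'}^c}h_{i'i'}$; a short calculation shows this equals $-\frac{k^2\theta}{(k-1)^2n(n-1)}\sigma^4+o(\sfrac1{n^2})$, i.e.\ it is precisely the second term of the displayed covariance and is of the \emph{same} order $O(\sfrac{1}{n^2})$ as the trace term. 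Your reduction of ``the whole proposition to the three traces'' therefore cannot produce the stated formula; the same caveat applies to $\Var(\h{\mu}_j)$ if derived purely from $\tr Q_j^2$.

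Second, your collected coefficient $(M_jM_{j'}^\ta)_{ii'}\simeq\lambda(\lambda-2)h_{ii'}$ is not merely imprecise but inconsistent with the statement you are proving. The quadratic block sums only over $l\in S_j\cap S_{j'}$, and since $\card(S_j\cap S_{j'})=\frac{(k-2)n}{k}$ one gets $g_{ii'}\simeq\frac{k-2}{k}h_{ii'}$, hence the block contributes $\lambda^2\frac{k-2}{k}h_{ii'}=\frac{k(k-2)}{(k-1)^2}h_{ii'}$; combined with the two cross blocks $-2\lambda h_{ii'}$ this gives $\frac{k(k-2)-2k(k-1)}{(k-1)^2}h_{ii'}=-\lambda^2h_{ii'}=-\frac{k^2}{(k-1)^2}h_{ii'}$, matching the paper's residual covariance $-\frac{k^2}{(k-1)^2}\sigma^2h_{ii'}$. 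Squaring $-\lambda^2$ and averaging with the pair-probability yields exactly $\frac{2k^4(p-\theta)}{(k-1)^4n(n-1)}$, whereas squaring your $\lambda(\lambda-2)$ would give $\frac{2k^2(k-2)^2(p-\theta)}{(k-1)^4n(n-1)}$. So the $g_{ii'}\ne h_{ii'}$ correction you defer as ``subleading'' changes the leading $O(\sfrac1{n^2})$ coefficient and must be carried out; once you do that, and add the law-of-total-covariance term above, your route does recover the stated result.
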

\begin{proof}
See Online Appendix.
\hfill$\square$
\end{proof}

\begin{proposition}
\label{prop.optim-nonnormality}
\begin{enumerate}[label=\rm(\alph*)]
\item Under normality of errors,
 \begin{enumerate}[label=\rm\roman*.]
  \item in the random CV case $n_1^{\rm opt}=\lfloor n/2 \rfloor$;
  \item in the $k$-fold case $k^{\rm opt}=n$, that is, the LOOCV is proposed.
 \end{enumerate}
\item For general error distribution with $\E(\varepsilon_i^{4+\epsilon})<\infty$ for some $\epsilon>0$, and in the case of random CV $n_1^{\rm opt}=\lfloor n/2 \rfloor$.
\end{enumerate}
\end{proposition}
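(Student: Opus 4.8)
The plan is to reduce the two random-CV statements, (a)i and (b), to a direct application of Theorem \ref{theo.opt-n_1}, and to treat the $k$-fold statement (a)ii separately by establishing monotonicity of the approximate variance in $k$. For the random-CV cases, the only work is to cast the variance of $\h{\mu}_j$ supplied by Propositions \ref{prop.E,V,C-randomCV} and \ref{prop.E,V,C-randomCV-nonnormality} into the canonical shape \eqref{eq.appr-V}, namely $\sfv\simeq A/n_1+B/(n-n_1)$, and then check the sign condition $A\le B$ that triggers the first branch of Theorem \ref{theo.opt-n_1}.

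Concretely, I would set $n_2=n-n_1$ and use the identity $1/(n_1n_2)=n^{-1}\bigl(1/n_1+1/n_2\bigr)$ (the same one used to derive \eqref{eq.appr-V}) to rewrite both variance expressions simultaneously as
\[
\sfv\simeq\frac{(4p+3\theta)\sigma^4}{n}\cdot\frac{1}{n_1}+\left(\mu_4-\sigma^4+\frac{(4p+3\theta)\sigma^4}{n}\right)\frac{1}{n_2},
\]
so that $A=(4p+3\theta)\sigma^4/n$ and $B=\mu_4-\sigma^4+(4p+3\theta)\sigma^4/n$; under normality one has $\mu_4=3\sigma^4$, recovering the $2\sigma^4/n_2$ leading term of Proposition \ref{prop.E,V,C-randomCV}, so this single form covers both (a)i and (b). Then $B-A=\mu_4-\sigma^4=\E(\varepsilon_i^4)-[\E(\varepsilon_i^2)]^2\ge0$ by the elementary moment (Jensen/Cauchy--Schwarz) inequality, hence $A\le B$ and Theorem \ref{theo.opt-n_1} yields $n_1^{\rm opt}=\lfloor n/2\rfloor$ in both cases. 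This step is essentially bookkeeping once the reduction is set up.

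For (a)ii I would substitute the $\sfv$ and $\sfc$ of Proposition \ref{prop.E,V,C-kfoldCV} into $\Var(\h{\mu}_{k\text{-fold}})=\tfrac1k\sfv+\tfrac{k-1}{k}\sfc$, obtained from \eqref{eq.Var(mu_J)} with $J=k$. The leading contribution $\tfrac1k\cdot\tfrac{2k\sigma^4}{n}=2\sigma^4/n$ is free of $k$, so optimality is decided by the coefficient $C(k)$ of the $\sfrac{1}{n^2}$ term. Collecting terms (approximating $n(n-1)$ by $n^2$ as in Remark \ref{rem.opt-k}) gives, up to the common factor $\sigma^4$,
\[
C(k)=\frac{4kp}{k-1}+\frac{2k\theta}{k-1}+\frac{pk^2}{(k-1)^2}+\frac{2k^3(p-\theta)}{(k-1)^3}.
\]
Since $p\ge0$ and $0\le\theta\le p$ forces every coefficient $4p,\,2\theta,\,p,\,2(p-\theta)$ to be nonnegative, and each of $k/(k-1)$, $k^2/(k-1)^2$, $k^3/(k-1)^3$ is strictly decreasing in $k\ge2$, the function $C(k)$ is decreasing; thus $\Var(\h{\mu}_{k\text{-fold}})$ is minimized by taking $k$ as large as admissible, i.e.\ $k^{\rm opt}=n$ (LOOCV), mirroring Proposition \ref{prop.opt-k}.

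The main obstacle is the $k$-fold part: the $k$-dependence lives entirely in the $\sfrac{1}{n^2}$ remainder, so one must combine $\tfrac1k\sfv$ and $\tfrac{k-1}{k}\sfc$ carefully, keep only the genuine order-$\sfrac{1}{n^2}$ terms, and then argue monotonicity cleanly. The two delicate points are the cancellation that reduces $3k\theta/(k-1)-k\theta/(k-1)$ to $2k\theta/(k-1)$, and the invocation of $0\le\theta\le p$ to guarantee nonnegative coefficients; without the latter the sign of the $(p-\theta)$ term could threaten the decreasing conclusion. I would also note, as in Remark \ref{rem.opt-k}, that replacing $n(n-1)$ by $n^2$ is harmless at this order even when $k$ grows with $n$, since the discarded pieces are $o(\sfrac{1}{n^2})$.
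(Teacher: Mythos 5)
Your proposal is correct and follows essentially the same route as the paper: the random-CV parts are the monotonicity analysis of $\sfv\simeq A/n_1+B/n_2$ from Theorem \ref{theo.opt-n_1} (which you invoke directly, with $B-A=\mu_4-\sigma^4\ge0$, while the paper repeats the same calculus on $g(t)=\frac{2}{n-t}+\frac{4p+3\theta}{t(n-t)}$), and the $k$-fold part is the same collection of the $O(\sfrac{1}{n^2})$ terms into a function of $k$ that is decreasing because $0\le\theta\le p$ makes every coefficient nonnegative. Your unified treatment of (a)i and (b) via $\mu_4$ and your explicit statement of the $3k\theta/(k-1)-k\theta/(k-1)$ cancellation are minor streamlinings, not a different argument.
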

\begin{proof}
(a) We omit the term $O(\sfrac{1}{n^2})$ of $\Var(\h{\mu}_j)$ in Proposition \ref{prop.E,V,C-randomCV} and consider the function $g(t)=\frac{2}{n-t}+\frac{4p+3\theta}{t(n-t)}$, $0<t<n$. As in proof of Theorem \ref{theo.opt-n_1}, we find that $g$ decreases up to $\frac{\sqrt{(4p+3\theta)/n}}{\sqrt{(4p+3\theta)/n}+\sqrt{2+(4p+3\theta)/n}}n\in(0,n/2)$ and increases after this, thus, $n_1^{\rm opt}=\lfloor n/2 \rfloor$, which does not depends on the parameters or the observations.

(b) From Proposition \ref{prop.E,V,C-kfoldCV}, since $\Cov(\h{\mu}_j,\h{\mu}_{j'})$ is a $O(\sfrac{1}{n^2})$ quantity, omitting the terms $o(\sfrac{1}{n^2})$ we get
$
\frac{\Var(\h{\mu}_{\textrm{$k$-fold}})}{\sigma^4}
=\frac{2}{n}
 +\frac{k[(p-\theta)n+(3n-4)p+3\theta(n-1)]}{(k-1)n^2(n-1)}
 +\frac{k^2p}{(k-1)^2n^2}
 +\frac{2k^3(p-\theta)}{(k-1)^3n(n-1)},
$
which implies that $k^{\rm opt}=n$.

(c) Using the same arguments as in proof of Theorem \ref{theo.opt-n_1} the optimal value of $n_1$ in \eqref{eq.opt-n1} follows.
\hfill$\square$
\end{proof}

\begin{remark}
\label{rem.opt}
Similarly to the case where the desision rule was $\overline{X}_{S_j}$, for the optimal value of $n_1=\lfloor n/2 \rfloor$, $\sfv^{\rm opt}=\frac{4\sigma^4}{n}+O(\sfrac{1}{n^2})$ and $\sfc^{\rm opt}=\frac{2\sigma^4}{n}+O(\sfrac{1}{n^2})$, hence, $\rho^{\rm opt}\simeq 1/2$. Therefore, the resampling size $J$ can be chosen either via specification of the resampling effectiveness or specification of the reduction ratio of $\h{\mu}_{{\rm CV},J}$, for a given $\pi$ or $r$, by the following relations [see \eqref{eq.Ja}, \eqref{eq.Jr}], cf.\ Table \ref{table.Jre,Jrr},
\[
J_{\mathsf{re}}(\pi)=\left\lfloor\frac{\pi}{1-\pi}\right\rfloor,
\quad\textrm{or}\quad
J_{\mathsf{rr}}(r)=\left\lfloor\sqrt{1+r^{-1}}\right\rfloor.
\]
\end{remark}

\begin{remark}
\label{rem.regr_general}
The above results can be extended to a subclass of the $q$-loss functions class; this subclass contains differentiable functions that can be expressed as functions of the errors.
\end{remark}

\subsubsection{Classification via logistic regression}
\label{sssec:0/1}

To illustrate the difficulty of obtaining a close form solution for the training set sample size in general, when other than the squared error loss functions are used, we discuss the case of classification via logistic regression.

\citet{HTF2009} formalize logistic regression as a linear classification method, where $y$ is the label of the data and $\bm{x}$ is a feature vector. The loss function here is the logistic loss given as $\log(1+e^{-yp_i})$, where $y$ is a label and $p_i$ is the algorithm prediction, such that
\[
\logit(p_i)=\bm{x}^\ta_i{\bm{\beta}}+\varepsilon_i,
\qquad
p_i=\Pr(y_i=1|\bm{x}_i).
\]

In this case the minimization problem \eqref{eq.opt-n1} does not have a closed-form solution for obtaining the optimal value of $n_1$, and the problem is reduced to numerical optimization. We offer the following algorithm in order to obtain, numerically, the optimal value of the training set sample size.

In view of \eqref{eq.opt-n1}, first we show the following general result.

\begin{theorem}
\label{theo.var(mu_j)}
Let $L$ be a loss function, $S_j$ an index set of size $n_1<n$ ($n_2=n-n_1$) and $i,i'\in S_j^c$. If the asymptotic values of $\E[L(\h{y}_{S_j,i},y_i)|S_j,i]$, and $\E[L(\h{y}_{S_j,i},y_i)L(\h{y}_{S_j,i'},y_{i'})|S_j,i,i']$ do not depend on the particular realization of $S_j$, say $\sfe_i$ and $\sfe_{i,i'}$ respectively, then, the asymptotic value of $\E(\h{\mu}_j)$ and $\Var(\h{\mu}_j)$ are
\[
\begin{split}
\E(\h{\mu}_j)&=\frac{1}{n}\sum_{i=1}^n\sfe_i,\\
\Var(\h{\mu}_j)&=\frac{1}{n^2n_2}\left\{\sum_{i=1}^n(n\sfe_{i,i}-n_2\sfe_i^2)
               +\frac{2}{n-1}\mathop{\sum\sum}_{1\le i<i'\le n}[n(n_2-1)\sfe_{i,i'}-(n-1)n_2\sfe_i\sfe_{i'}]\right\}.
\end{split}
\]
\end{theorem}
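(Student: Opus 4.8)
The plan is to obtain both moments directly from the definition $\h{\mu}_j=\frac{1}{n_2}\sum_{i\in S_j^c}L(\h{y}_{S_j,i},y_i)$ by conditioning on the random training set $S_j$ and then averaging over its uniform distribution on $\CS_{n,n_1}$. The two probabilistic ingredients I will need are the inclusion probabilities of a uniform $n_1$-subset, namely
\[
\Pr(i\in S_j^c)=\frac{\binom{n-1}{n_1}}{\binom{n}{n_1}}=\frac{n_2}{n}
\quad\text{and}\quad
\Pr(i,i'\in S_j^c)=\frac{\binom{n-2}{n_1}}{\binom{n}{n_1}}=\frac{n_2(n_2-1)}{n(n-1)}\ \ (i\neq i').
\]
Because the hypothesis furnishes a common asymptotic value $\sfe_i$ (resp.\ $\sfe_{i,i'}$) for $\E[L(\h{y}_{S_j,i},y_i)\mid S_j,i]$ (resp.\ for the product conditional expectation) that is independent of the realization of $S_j$, I may replace each conditional expectation by its limit uniformly over $\CS_{n,n_1}$ before averaging; the expectation over $S_j$ then acts only on the set-membership indicators, and this is the sole point at which the hypothesis enters.

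For the first moment I would condition on $S_j$, replace the inner conditional expectation by $\sfe_i$, write the sum over $S_j^c$ as $\frac{1}{n_2}\sum_{i=1}^n\sfe_i\,\one{i\in S_j^c}$, and take expectation over $S_j$. Using $\Pr(i\in S_j^c)=n_2/n$ the factors of $n_2$ cancel and I obtain $\E(\h{\mu}_j)=\frac{1}{n}\sum_{i=1}^n\sfe_i$.

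For the second moment I would expand $\h{\mu}_j^2=\frac{1}{n_2^2}\sum_{i,i'\in S_j^c}L(\h{y}_{S_j,i},y_i)L(\h{y}_{S_j,i'},y_{i'})$, split the double sum into its diagonal ($i=i'$) and off-diagonal ($i\neq i'$) parts, and replace the conditional expectations by $\sfe_{i,i}$ and $\sfe_{i,i'}$ respectively. Averaging over $S_j$ with the two inclusion probabilities above, and using the symmetry $\sfe_{i,i'}=\sfe_{i',i}$ to fold the off-diagonal sum into $2\sum_{i<i'}$, yields
\[
\E(\h{\mu}_j^2)=\frac{1}{n\,n_2}\sum_{i=1}^n\sfe_{i,i}+\frac{2(n_2-1)}{n(n-1)n_2}\mathop{\sum\sum}_{1\le i<i'\le n}\sfe_{i,i'}.
\]
Subtracting $[\E(\h{\mu}_j)]^2=\frac{1}{n^2}\big(\sum_{i=1}^n\sfe_i^2+2\sum_{i<i'}\sfe_i\sfe_{i'}\big)$, then grouping the $\sfe_{i,i}$ term with the $\sfe_i^2$ term and the $\sfe_{i,i'}$ term with the $\sfe_i\sfe_{i'}$ term after extracting the common factor $1/(n^2n_2)$, reproduces the claimed expression.

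The computation is essentially routine; the only real care is the limit/expectation interchange described above and the final algebraic bookkeeping---specifically verifying $\frac{2(n_2-1)}{n(n-1)n_2}=\frac{1}{n^2n_2}\cdot\frac{2n(n_2-1)}{n-1}$ and $\frac{2}{n^2}=\frac{1}{n^2n_2}\cdot\frac{2(n-1)n_2}{n-1}$, so that both off-diagonal contributions share the displayed factor $2/(n-1)$ inside the braces. I do not expect any substantive obstacle beyond this matching.
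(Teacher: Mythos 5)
Your proposal is correct and follows essentially the same route as the paper's own proof: condition on $S_j$, write $\h{\mu}_j$ and $\h{\mu}_j^2$ via the indicators $\one{i\in S_j^c}$ and $\one{i,i'\in S_j^c}$, average using the inclusion probabilities $n_2/n$ and $n_2(n_2-1)/[n(n-1)]$ (which are exactly parts (a), (b), (e) of the paper's Lemma \ref{lem.indices}), and then form $\E(\h{\mu}_j^2)-[\E(\h{\mu}_j)]^2$. The final algebraic matching you check is right, so there is nothing to add.
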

\begin{proof}
See Online Appendix.
\hfill$\square$
\end{proof}

Theorem \ref{theo.var(mu_j)} provides expressions for the expected value and variance of $\h{\mu}_j$ that enter the specification of the optimality rule.

\begin{algorithm}[htp]
\caption{Optimal size of training set in classification via logistic regression.}
\label{algorithm}
\begin{algorithmic}[1]
\small
\item Compute the matrix $V=n(X^\ta X)^{-1}$ and via logistic regression estimate the parameters $\bm\beta$ and $\sigma^2$, say $\h{\bm{\beta}}$ and $\h\sigma^2$ respectively, using the entire sample.
    \medskip

\item Compute the following probabilities and quantities
      \[
      p_i=\Pr(y_i=1|\bm{x}_i),
      \quad
      \h{\zeta}_i=\frac{\bm{x}^\ta_i\h{\bm{\beta}}}{\h{\sigma}(\bm{x}^\ta_iV\bm{x}_i)^{1/2}},
      \quad
      \h{\rho}_{i,i'}=\frac{\bm{x}_{i}^\ta V \bm{x}_{i'}}{(\bm{x}_{i}^\ta V \bm{x}_{i}\bm{x}_{i'}^\ta V \bm{x}_{i'})^{1/2}},
      \quad i,i'=1,\ldots,n.
      \]
 \Comment{{\it Each $\h{\rho}_{i,i'}$ is between $-1$ and $1$.}}
\medskip

\item For $n_1=\lfloor n/2 \rfloor,\ldots,n-1$
      \begin{enumerate}[label=\alph*.]
      \item
      Compute the quantities $\sfe_i$, $\sfe_{i,i}$ and $\sfe_{i,i'}$ of Theorem \ref{theo.var(mu_j)}
      \[
      \begin{split}
      &\sfe_i=\sfe_{i,i}=\varPhi(-\sqrt{n_1}\h{\zeta}_i)p_i+\varPhi(\sqrt{n_1}\h{\zeta}_i)(1-p_i),\\
      &\sfe_{i,i'}= \varPhi_{2,\h{\rho}_{i,i'}}(-\sqrt{n_1}\h{\zeta}_i,-\sqrt{n_1}\h{\zeta}_{i'})p_ip_{i'}
                     +\varPhi_{2,-\h{\rho}_{i,i'}}(-\sqrt{n_1}\h{\zeta}_i,\sqrt{n_1}\h{\zeta}_{i'})p_i(1-p_{i'})\\
                 &\qquad~~
                     +\varPhi_{2,-\h{\rho}_{i,i'}}(\sqrt{n_1}\h{\zeta}_i,-\sqrt{n_1}\h{\zeta}_{i'})(1-p_i)p_{i'}
                     +\varPhi_{2,\h{\rho}_{i,i'}}(\sqrt{n_1}\h{\zeta}_i,\sqrt{n_1}\h{\zeta}_{i'})(1-p_i)(1-p_{i'}).
      \end{split}
      \]
      \item Using Theorem \ref{theo.var(mu_j)}, compute $\sfv(n_1)=\Var(\h{\mu}_j)$.
      \end{enumerate}
\item Set $n_1^{\rm opt}=\arg\min_{n_1=\lfloor n/2 \rfloor,\ldots,n-1}\left\{\sfv(n_1)\right\}$.
\end{algorithmic}
\end{algorithm}

Given $S_j$ and $i,i'\in S_j^c$ standardize the components of $\bm{W}_{i,i'}$ in Lemma \ref{lem.W} as $\varPsi_i=\frac{W_i}{\sigma(\bm{x}_i^\ta V \bm{x}_i)^{1/2}}$, $\varPsi_{i'}=\frac{W_{i'}}{\sigma(\bm{x}_{i'}^\ta V \bm{x}_{i'})^{1/2}}$ and consider the random vector $\bm{\varPsi}_{i,i'}=(\varPsi_{i},\varPsi_{i'})^\ta$. Then,
\[
\bm{\varPsi}_{i,i'}\stackrel{\rm d}{\longrightarrow}N_2(\bm{0},\Sig^{\bm \varPsi}_{i,i'}),\qquad\textrm{where} \ \
\Sig^{\bm W}_{i,i'}
=
\left(
  \begin{array}{@{\hspace{0ex}}c@{\hspace{2ex}}c@{\hspace{0ex}}}
    1 & \rho_{i,i'} \\
    \rho_{i,i'} & 1
  \end{array}
\right),
\ \textrm{with} \ \rho_{i,i'}=\frac{\bm{x}_{i}^\ta V \bm{x}_{i'}}{(\bm{x}_{i}^\ta V \bm{x}_{i}\bm{x}_{i'}^\ta V \bm{x}_{i'})^{1/2}}.
\]
Also, define
\[
\zeta_i=\frac{\bm{x}^\ta_i\bm{\beta}}{\sigma(\bm{x}^\ta_iV\bm{x}_i)^{1/2}},\quad i=1,\ldots,n.
\]

The decision rule for classification is then given as follows. For a training set $\CZ_{S_j}$, from the model we estimate the probability $p_i$ for each $i\in S_j^c$, say $\h{p}_{S_j,i}$ its estimator. After, we estimate the value $y_i$ as
\[
\h{y}_{S_j,i}
=\one{\h{p}_{S_j,i}\ge1/2}
=\one{\bm{x}^\ta_i\h{\bm{\beta}}_{S_j}\ge0}
=\one{\varPsi_i\ge-\sqrt{n_1}\zeta_i}.
\]
The loss function is $L_{0/1}(\h{y}_{S_j,i},y_i)=\one{\h{y}_{S_j,i}\ne y_i}$ \citep[see][]{McAllester2007}. We compute the values $\E[L_{0/1}(\h{y}_{S_j,i},y_i)|S_j,i]=\E[L^2_{0/1}(\h{y}_{S_j,i},y_i)|S_j,i]$ and $\E[L_{0/1}(\h{y}_{S_j,i},y_i)L_{0/1}(\h{y}_{S_j,i},y_i)|S_j,i,i']$, see in Online Appendix, which are given as
\begin{equation}
\label{eq.e_i,e_ii,e_ii',0/1}
\begin{split}
\sfe_i&=\sfe_{i,i}=\varPhi(-\sqrt{n_1}\zeta_i)p_i+\varPhi(\sqrt{n_1}\zeta_i)(1-p_i),\\
\sfe_{i,i'}&= \varPhi_{2,\rho_{i,i'}}(-\sqrt{n_1}\zeta_i,-\sqrt{n_1}\zeta_{i'})p_ip_{i'}
  +\varPhi_{2,-\rho_{i,i'}}(-\sqrt{n_1}\zeta_i,\sqrt{n_1}\zeta_{i'})p_i(1-p_{i'})\\
&\hspace*{-2.9ex}
  +\varPhi_{2,-\rho_{i,i'}}(\sqrt{n_1}\zeta_i,-\sqrt{n_1}\zeta_{i'})(1-p_i)p_{i'}
  +\varPhi_{2,\rho_{i,i'}}(\sqrt{n_1}\zeta_i,\sqrt{n_1}\zeta_{i'})(1-p_i)(1-p_{i'}),
\end{split}
\end{equation}
where $\varPhi$ is the cumulative distribution function of standard normal distribution and $\varPhi_{2,\rho}$ is the cumulative distribution function of $N_2\left(\bm{0},\left({1\atop\rho}~~{\rho\atop1}\right)\right)$.

Theorem \ref{theo.var(mu_j)} gives a formula to calculate the variance of $\h{\mu}_j$. But, the minimization process of \eqref{eq.opt-n1} does not give a close form for the optimal value of $n_1$, it is reduced to a numerical minimization process. Computing the quantities $p_i$, $\zeta_i$ and $\rho_{i,i'}$, via Theorem \ref{theo.var(mu_j)} and \eqref{eq.e_i,e_ii,e_ii',0/1}, we calculate the variance of $\h{\mu}_j$ for $n_1=\lfloor n/2 \rfloor,\ldots,n-1$. The value of $n_1$ which gives the minimum value of the variance is the optimal choice of $n_1$.

The parameters $\sigma^2$, $\bm{\beta}$ are unknown and we estimate those before the minimization process begins using the entire data set. We also compute $V$ as $\h{V}=n(X^\ta X)^{-1}$.

\section{Simulation study}
\label{sec:simu}

In this section we present simulation results using a variety of distributions and loss functions with the goal of illustrating empirically our theoretical results. We discuss the empirical performance of our rules organizing the presentation according to the cases studied above, and we note that general simple recommendations about optimal selection of training sample size are possible.

\subsection{Sample mean}
\label{ssec:simu.sample.mean}
Using the version of \includegraphics[scale = .5]{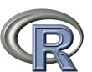}, {\it Ri386 3.1.2} on a {\it DELL Latitude E7240} PC we simulated $10^4$ samples of size $n=60, 100, 301, 750, 1501, 5000$ from distributions that can be categorized into symmetric with a variety of tail behaviors (normal, $U(-1,1)$, $t_{12}$, $t_6$) and asymmetric ($\exp(1)$, log-normal, Pareto$(15)$, Pareto$(6)$). The normal distribution is central in statistics, while the two $t$-distributions exhibit heavier than the normal, tail behavior. The log-normal distribution is used in biostatistics in biomarker studies, while the Pareto distribution is a power law distribution used in the description of social, geophysical, scientific, actuarial and many other observable phenomena. The selection of $t_6$ and Pareto(6) distributions is not arbitrary. Both the $t_6$ and Pareto(6) distributions possess less than six moments, and for our theory to apply we require the existence of up to six moments. Thus, these selections reflect the performance of the methods in limit cases. To illustrate the effect of the choice of loss function has on the size of training set we use the loss functions presented in Table \ref{table.loss}.

Our results indicate that for the $q$-class of loss functions the optimal training sample size is $\lfloor n/2 \rfloor$, independent of the data distribution. However, one can construct loss functions, such as the modified squared error loss given in Table \ref{table.loss}, for which the optimal training sample size is not $\lfloor n/2 \rfloor$. Notice that, the modified squared error loss functions does not belong in the $q$-class.

\begin{table}[htp]
 \caption{Loss functions: squared, $q$-class with $q(t)=-\sqrt{1+t^2}$, approximate absolute (where $d>0$), modified squared and double squared, and their first two derivatives with respect to $\mu$.}
 \label{table.loss}
 \centering{\footnotesize
 \begin{tabular}
 {@{\hspace{0ex}}l@{\hspace{4.2ex}}l@{\hspace{4.2ex}}c@{\hspace{4.2ex}}r@{\hspace{0ex}}}
 \addlinespace
 \toprule
 \bf name & $\bm{L_\mu(x)}$ & $\bm{L'_\mu(x)}$ & $\bm{L''_\mu(x)}$ \\
 \hline
 squared:
   & $(x-\mu)^2$
   & $-2(x-\mu)$
   & $2$ \\
 [1.5ex]
 $q$-class:
   & $\ds-\sqrt{1+\mu^2}-\frac{\mu(x-\mu)}{\sqrt{1+\mu^2}}+\sqrt{1+x^2}$
   & $\ds\frac{\mu-x}{(\mu^2+1)^{3/2}}$
   & $\ds\frac{-2\mu^2+3x\mu+1}{(\mu^2+1)^{5/2}}$ \\
 [1.5ex]
 approximated absolute:
   & $\ds\sqrt{(x-\mu)^2+d}$
   & $\ds\frac{\mu-x}{\sqrt{(x-\mu)^2+d}}$
   & $\ds\frac{d}{((\mu-x)^2+d)^{3/2}}$ \\
 [2.5ex]
 modified squared:
   & $(x-\mu)^2+\mu^2$
   & $4\mu-2x$
   & $4$ \\
 [1.5ex]
 double squared:
   & $\left(x^2-\mu^2\right)^2$
   & $-4\mu\left(x^2-\mu^2\right)$
   & $-4\left(x^2-\mu^2\right)+8\mu^2$ \\
 \bottomrule
 \end{tabular}}
 \end{table}

Tables \ref{table.Sq}--\ref{table.DouSq} (and Table \ref{table.Abs} included in the Online Appendix) present the estimated and theoretical values of $n_1^{\rm opt}/n$, $\rho^{\rm opt}$ and $k^{\rm opt}/n$ when the decision rule is the sample mean for aforementioned distributions using the loss functions presented in Table \ref{table.loss}. The estimated values of $n_1^{\rm opt}$, and hence the estimated proportion $n_1^{\rm opt}/n$, are obtained by estimating the values of the parameters $A$, $B$ of Theorem \ref{theo.opt-n_1}, using the data. Similarly, to estimate the value of $k^{\rm opt}$, and thus $k^{\rm opt}/n$, we estimate, using the data, the values of $\gamma$, $\delta$ of Proposition \ref{prop.opt-k} (see also \eqref{eq.a,b,c,d}). Because $n_1^{\rm opt}$ and $k^{\rm opt}$ are functions of the total sample size $n$, Tables \ref{table.Sq}--\ref{table.DouSq} present these estimated proportions along with their mean squared error. Therefore, if $n_1^{\rm opt}/n=0.5$ then $n_1^{\rm opt}=\lfloor n/2 \rfloor$, and if $k^{\rm opt}/n=1$ then $k^{\rm opt}=n$, a value that corresponds to LOOCV.

Table \ref{table.Sq} presents the estimated and, in parenthesis, the theoretical value of $n_1^{\rm opt}/n$, the estimated value of $\rho^{\rm opt}$, and the estimated from the data value of $k^{\rm opt}/n$. The second line, for each sample size, reports the mean squared error (MSE). The loss function here is squared error loss, which belongs in the $q$-class of loss functions. The results of Table \ref{table.Sq} indicate that $n_1^{\rm opt}=\lfloor n/2 \rfloor$, and $k^{\rm opt}=n$ corresponds to LOOCV for all distributions. Furthermore, the estimator of the correlation coefficient is highly accurate.

Similarly, Tables \ref{table.Efron} and \ref{table.Abs} (see Online Appendix) present simulation results when the loss functions are a $q$-loss with generator $q(t)=-\sqrt{t^2+1}$ and the approximated absolute error loss. The results of these Tables indicate that $n_1^{\rm opt}=\lfloor n/2 \rfloor$ and $k^{\rm opt}$ corresponds again to LOOCV. The approximated absolute error loss does not belong in the $q$-class of loss functions; we take $d=n^{-1}$.

We now contrast the above results with those presented in Tables \ref{table.ModSq} and \ref{table.DouSq}. Tables \ref{table.ModSq}, \ref{table.DouSq} present simulation results using the sample mean as the decision rule, for a variety of distributions and sample sizes indicated in the tables. Note that the minimum divisor of sample sizes 60, 100, 750 and 5000 is 2, while the minimum divisor for sample size 301 is 7 and of sample size 1501 is 19.

Tables \ref{table.ModSq}, \ref{table.DouSq} exemplify clearly the interaction between the loss function and the data distributions, with the selection of the optimal training sample size and the selection of the optimal number of folds in $k$-fold CV. Table \ref{table.ModSq} presents the proportion of the sample size allocated to the training set. While distributions such as $\exp(1)$ or log-normal select $n_1^{\rm opt}=\lfloor n/2 \rfloor$, other distributions such as $U(0,1)$ or $t_6(5)$ require $n_1^{\rm opt}$ to be approximately 80\% of the total sample size. On the other hand, in all cases the modified squared error loss function, offers $k^{\rm opt}=n$, the LOOCV.

Table \ref{table.DouSq} presents analogous results for the double squared error loss. The results indicate the impact of the loss function and data distribution on the selection of $k^{\rm opt}$. Note that, our limit cases here are represented by $t_9$ and Pareto(9) distributions (recall that we require the existence of at least eight moments). Note that all distributions, with the exception of the Pareto(15) and Pareto(9) select as $k^{\rm opt}=\md(n)$, for all sample sizes used. The Pareto distributions select $k^{\rm opt}=n$, indicating optimality for the LOOCV. The double squared error loss does not belong in the $q$-class of loss functions. The comparison with the results presented, for example in Tables \ref{table.Sq} and \ref{table.Efron}, where the loss functions were members of the $q$-class, clearly indicates the impact of the loss function on the optimal sample size selection for the training and hence the test set.

 \begin{table}[htp]
 \caption{Average of the estimated values and their empirical mean square error for various data distributions and various values of $n$, for the squared error loss and the case of sample mean.}
 \label{table.Sq}
 \centering{\footnotesize
 \begin{tabular}
 {@{\hspace{0ex}}l@{\hspace{4.5ex}}r@{\hspace{4.5ex}}
 r@{\hspace{2.5ex}}r@{\hspace{2.5ex}}r@{\hspace{0ex}}
 c@{\hspace{7ex}}
 r@{\hspace{2.5ex}}r@{\hspace{2.5ex}}r@{\hspace{2.5ex}}r
 @{\hspace{0ex}}}
 \addlinespace
 \toprule
 & \multicolumn{9}{c}{$\begin{array}{lr}\textrm{\bf Sample Size}&\textrm{\bf Estimator(Theoretical Value)}\\&\textrm{\bf MSE}\end{array}$} \\
 \hline
 $F$ & \multicolumn{1}{l}{$n$} & \multicolumn{1}{l}{$\frac{n_1^{\rm opt}}{n}$} & \multicolumn{1}{l}{$\rho^{\rm opt}$} & \multicolumn{1}{l}{$\frac{k^{\rm opt}}{n}$}
 &
 & \multicolumn{1}{l}{$n$} & \multicolumn{1}{l}{$\frac{n_1^{\rm opt}}{n}$} & \multicolumn{1}{l}{$\rho^{\rm opt}$} & \multicolumn{1}{l}{$\frac{k^{\rm opt}}{n}$}\\
 \hline
 \multirow{6}{*}{\rotatebox{90}{$N(0,1)$}}
 &\rm   60 &\rm .500(.500) &\rm .4821(.4844) &\rm 1(1) & &\rm  750 &\rm .500(.500) &\rm .4987(.4987) &\rm 1(1) \\
 [-.1ex]
 &\rm      &\rm         0  &\rm $<10^{-4}$     &\rm    0 & &\rm      &\rm         0  &\rm     $<10^{-7}$ &\rm    0 \\
 [.3ex]
 &\rm  100 &\rm .500(.500) &\rm .4896(.4904) &\rm 1(1) & &\rm 1501 &\rm .500(.500) &\rm .4990(.4990) &\rm 1(1) \\
 [-.1ex]
 &\rm      &\rm        0   &\rm $<10^{-5}$     &\rm    0 & &\rm      &\rm        0   &\rm     $<10^{-8}$ &\rm    0 \\
 [.3ex]
 &\rm  301 &\rm .502(.502) &\rm .4950(.4950) &\rm 1(1) & &\rm 5000 &\rm .500(.500) &\rm .4998(.4998) &\rm 1(1) \\
 [-.1ex]
 &\rm      &\rm          0 &\rm $<10^{-6}$     &\rm    0 & &\rm      &\rm          0 &\rm    $<10^{-10}$ &\rm    0 \\
 \cline{2-5} \cline{7-10}
 \multirow{6}{*}{\rotatebox{90}{$U(-1,1)$}}
 &\rm   60 &\rm .500(.500) &\rm .4643(.4643) &\rm 1(1) & &\rm  750 &\rm .500(.500) &\rm .4967(.4967) &\rm 1(1) \\
 [-.1ex]
 &\rm      &\rm         0  &\rm $<10^{-4}$     &\rm    0 & &\rm      &\rm         0  &\rm     $<10^{-7}$ &\rm    0 \\
 [.3ex]
 &\rm  100 &\rm .500(.500) &\rm .4773(.4773) &\rm 1(1) & &\rm 1501 &\rm .500(.500) &\rm .4980(.4980) &\rm 1(1) \\
 [-.1ex]
 &\rm      &\rm        0   &\rm $<10^{-5}$     &\rm    0 & &\rm      &\rm        0   &\rm     $<10^{-8}$ &\rm    0 \\
 [.3ex]
 &\rm  301 &\rm .502(.502) &\rm .4904(.4904) &\rm 1(1) & &\rm 5000 &\rm .500(.500) &\rm .4995(.4995) &\rm 1(1) \\
 [-.1ex]
 &\rm      &\rm          0 &\rm $<10^{-6}$     &\rm    0 & &\rm      &\rm          0 &\rm     $<10^{-9}$ &\rm    0 \\
 \cline{2-5} \cline{7-10}
 \multirow{6}{*}{\rotatebox{90}{$t_{12}$}}
 &\rm   60 &\rm .500(.500) &\rm .4850(.4884) &\rm 1(1) & &\rm  750 &\rm .500(.500) &\rm .4990(.4990) &\rm 1(1) \\
 [-.1ex]
 &\rm      &\rm         0  &\rm     $<10^{-4}$ &\rm    0 & &\rm      &\rm         0  &\rm     $<10^{-7}$ &\rm    0 \\
 [.3ex]
 &\rm  100 &\rm .500(.500) &\rm .4915(.4929) &\rm 1(1) & &\rm 1501 &\rm .500(.500) &\rm .4992(.4992) &\rm 1(1) \\
 [-.1ex]
 &\rm      &\rm        0   &\rm     $<10^{-5}$ &\rm    0 & &\rm      &\rm        0   &\rm     $<10^{-8}$ &\rm    0 \\
 [.3ex]
 &\rm  301 &\rm .502(.502) &\rm .4957(.4959) &\rm 1(1) & &\rm 5000 &\rm .500(.500) &\rm .4999(.4999) &\rm 1(1) \\
 [-.1ex]
 &\rm      &\rm          0 &\rm     $<10^{-6}$ &\rm    0 & &\rm      &\rm          0 &\rm     $<10^{-9}$ &\rm    0 \\
 \cline{2-5} \cline{7-10}
 \multirow{6}{*}{\rotatebox{90}{$t_{6}$}}
 &\rm   60 &\rm .500(.500) &\rm .4879(.4935) &\rm 1(1) & &\rm  750 &\rm .500(.500) &\rm .4993(.4994) &\rm 1(1) \\
 [-.1ex]
 &\rm      &\rm         0  &\rm     $<10^{-4}$ &\rm    0 & &\rm      &\rm         0  &\rm     $<10^{-7}$ &\rm    0 \\
 [.3ex]
 &\rm  100 &\rm .500(.500) &\rm .4934(.4961) &\rm 1(1) & &\rm 1501 &\rm .500(.500) &\rm .4994(.4994) &\rm 1(1) \\
 [-.1ex]
 &\rm      &\rm        0   &\rm     $<10^{-4}$ &\rm    0 & &\rm      &\rm        0   &\rm     $<10^{-8}$ &\rm    0 \\
 [.3ex]
 &\rm  301 &\rm .502(.502) &\rm .4965(.4970) &\rm 1(1) & &\rm 5000 &\rm .500(.500) &\rm .4999(.4999) &\rm 1(1) \\
 [-.1ex]
 &\rm      &\rm          0 &\rm     $<10^{-6}$ &\rm    0 & &\rm      &\rm          0 &\rm     $<10^{-9}$ &\rm    0 \\
 \cline{2-5} \cline{7-10}
 \multirow{6}{*}{\rotatebox{90}{$\exp(1)$}}
 &\rm   60 &\rm .500(.500) &\rm .4918(.4959) &\rm 1(1) & &\rm  750 &\rm .500(.500) &\rm .4972(.4975) &\rm 1(1) \\
 [-.1ex]
 &\rm      &\rm         0  &\rm     $<10^{-4}$ &\rm    0 & &\rm      &\rm         0  &\rm     $<10^{-6}$ &\rm    0 \\
 [.3ex]
 &\rm  100 &\rm .500(.500) &\rm .4958(.4975) &\rm 1(1) & &\rm 1501 &\rm .500(.500) &\rm .4995(.4995) &\rm 1(1) \\
 [-.1ex]
 &\rm      &\rm        0   &\rm     $<10^{-5}$ &\rm    0 & &\rm      &\rm        0   &\rm     $<10^{-9}$ &\rm    0 \\
 [.3ex]
 &\rm  301 &\rm .502(.502) &\rm .4972(.4975) &\rm 1(1) & &\rm 5000 &\rm .500(.500) &\rm .4999(.4999) &\rm 1(1) \\
 [-.1ex]
 &\rm      &\rm          0 &\rm     $<10^{-6}$ &\rm    0 & &\rm      &\rm          0 &\rm     $<10^{-9}$ &\rm    0 \\
 \cline{2-5} \cline{7-10}
 \multirow{6}{*}{\rotatebox{90}{Log Normal}}
 &\rm   60 &\rm .500(.500) &\rm .4957(.4971) &\rm 1(1) & &\rm  750 &\rm .500(.500) &\rm .4999(.4999) &\rm 1(1) \\
 [-.1ex]
 &\rm      &\rm         0  &\rm     $<10^{-4}$ &\rm    0 & &\rm      &\rm         0  &\rm     $<10^{-6}$ &\rm    0 \\
 [.3ex]
 &\rm  100 &\rm .500(.500) &\rm .4981(.4998) &\rm 1(1) & &\rm 1501 &\rm .500(.500) &\rm .4996(.4997) &\rm 1(1) \\
 [-.1ex]
 &\rm      &\rm        0   &\rm     $<10^{-6}$ &\rm    0 & &\rm      &\rm        0   &\rm     $<10^{-8}$ &\rm    0 \\
 [.3ex]
 &\rm  301 &\rm .502(.502) &\rm .4980(.4982) &\rm 1(1) & &\rm 5000 &\rm .500(.500) &\rm .4999(.5000) &\rm 1(1) \\
 [-.1ex]
 &\rm      &\rm          0 &\rm     $<10^{-6}$ &\rm    0 & &\rm      &\rm          0 &\rm    $<10^{-10}$ &\rm    0 \\
 \cline{2-5} \cline{7-10}
 \multirow{6}{*}{\rotatebox{90}{Pareto$(15)$}}
 &\rm   60 &\rm .500(.500) &\rm .4931(.4974) &\rm 1(1) & &\rm  750 &\rm .500(.500) &\rm .4997(.4998) &\rm 1(1) \\
 [-.1ex]
 &\rm      &\rm         0  &\rm     $<10^{-5}$ &\rm    0 & &\rm      &\rm         0  &\rm     $<10^{-7}$ &\rm    0 \\
 [.3ex]
 &\rm  100 &\rm .500(.500) &\rm .4966(.4984) &\rm 1(1) & &\rm 1501 &\rm .500(.500) &\rm .4995(.4996) &\rm 1(1) \\
 [-.1ex]
 &\rm      &\rm        0   &\rm     $<10^{-5}$ &\rm    0 & &\rm      &\rm        0   &\rm     $<10^{-8}$ &\rm    0 \\
 [.3ex]
 &\rm  301 &\rm .502(.502) &\rm .4975(.4978) &\rm 1(1) & &\rm 5000 &\rm .500(.500) &\rm .4999(.4999) &\rm 1(1) \\
 [-.1ex]
 &\rm      &\rm          0 &\rm     $<10^{-6}$ &\rm    0 & &\rm      &\rm          0 &\rm    $<10^{-10}$ &\rm    0 \\
 \cline{2-5} \cline{7-10}
 \multirow{6}{*}{\rotatebox{90}{Pareto$(6)$}}
 &\rm   60 &\rm .500(.500) &\rm .4944(.4991) &\rm 1(1) & &\rm  750 &\rm .500(.500) &\rm .4998(.4999) &\rm 1(1) \\
 [-.1ex]
 &\rm      &\rm         0  &\rm     $<10^{-4}$ &\rm    0 & &\rm      &\rm         0  &\rm     $<10^{-7}$ &\rm    0 \\
 [.3ex]
 &\rm  100 &\rm .500(.500) &\rm .4974(.4995) &\rm 1(1) & &\rm 1501 &\rm .500(.500) &\rm .4996(.4996) &\rm 1(1) \\
 [-.1ex]
 &\rm      &\rm        0   &\rm     $<10^{-5}$ &\rm    0 & &\rm      &\rm        0   &\rm     $<10^{-8}$ &\rm    0 \\
 [.3ex]
 &\rm  301 &\rm .502(.502) &\rm .4978(.4981) &\rm 1(1) & &\rm 5000 &\rm .500(.500) &\rm .4999(.4999) &\rm 1(1) \\
 [-.1ex]
 &\rm      &\rm          0 &\rm     $<10^{-6}$ &\rm    0 & &\rm      &\rm          0 &\rm    $<10^{-10}$ &\rm    0 \\
 \bottomrule
 \end{tabular}}
 \end{table}

 \begin{table}[htp]
 \caption{Average of the estimated values and their empirical mean square error for various data distributions and various values of $n$, for the Efron's $q$ error loss, with $q(t)=-\sqrt{t^2+1}$, for the case of sample mean.}
 \label{table.Efron}
 \centering{\footnotesize
 \begin{tabular}
 {@{\hspace{0ex}}l@{\hspace{4.5ex}}r@{\hspace{4.5ex}}
 r@{\hspace{2.5ex}}r@{\hspace{2.5ex}}r@{\hspace{0ex}}
 c@{\hspace{7ex}}
 r@{\hspace{2.5ex}}r@{\hspace{2.5ex}}r@{\hspace{2.5ex}}r
 @{\hspace{0ex}}}
 \addlinespace
 \toprule
 & \multicolumn{9}{c}{$\begin{array}{lr}\textrm{\bf Sample Size}&\textrm{\bf Estimator(Theoretical Value)}\\&\textrm{\bf MSE}\end{array}$} \\
 \hline
 $F$ & \multicolumn{1}{l}{$n$} & \multicolumn{1}{l}{$\frac{n_1^{\rm opt}}{n}$} & \multicolumn{1}{l}{$\rho^{\rm opt}$} & \multicolumn{1}{l}{$\frac{k^{\rm opt}}{n}$}
 &
 & \multicolumn{1}{l}{$n$} & \multicolumn{1}{l}{$\frac{n_1^{\rm opt}}{n}$} & \multicolumn{1}{l}{$\rho^{\rm opt}$} & \multicolumn{1}{l}{$\frac{k^{\rm opt}}{n}$}\\
 \hline
 \multirow{6}{*}{\rotatebox{90}{$N(0,1)$}}
 &\rm   60 &\rm .500(.500) &\rm .4578(.4580) &\rm 1(1) & &\rm  750 &\rm .500(.500) &\rm .4960(.4960) &\rm 1(1) \\
 [-.1ex]
 &\rm      &\rm         0  &\rm     $<10^{-4}$ &\rm    0 & &\rm      &\rm         0  &\rm     $<10^{-7}$ &\rm    0 \\
 [.3ex]
 &\rm  100 &\rm .500(.500) &\rm .4730(.4730) &\rm 1(1) & &\rm 1501 &\rm .500(.500) &\rm .4977(.4977) &\rm 1(1) \\
 [-.1ex]
 &\rm      &\rm        0   &\rm     $<10^{-4}$ &\rm    0 & &\rm      &\rm        0   &\rm     $<10^{-8}$ &\rm    0 \\
 [.3ex]
 &\rm  301 &\rm .502(.502) &\rm .4888(.4888) &\rm 1(1) & &\rm 5000 &\rm .500(.500) &\rm .4994(.4994) &\rm 1(1) \\
 [-.1ex]
 &\rm      &\rm          0 &\rm     $<10^{-6}$ &\rm    0 & &\rm      &\rm          0 &\rm     $<10^{-9}$ &\rm    0 \\
 \cline{2-5} \cline{7-10}
 \multirow{6}{*}{\rotatebox{90}{$U(-1,1)$}}
 &\rm   60 &\rm .500(.500) &\rm .4541(.4528) &\rm 1(1) & &\rm  750 &\rm .500(.500) &\rm .4954(.4954) &\rm 1(1) \\
 [-.1ex]
 &\rm      &\rm         0  &\rm     $<10^{-4}$ &\rm    0 & &\rm      &\rm         0  &\rm     $<10^{-7}$ &\rm    0 \\
 [.3ex]
 &\rm  100 &\rm .500(.500) &\rm .4700(.4693) &\rm 1(1) & &\rm 1501 &\rm .500(.500) &\rm .4974(.4974) &\rm 1(1) \\
 [-.1ex]
 &\rm      &\rm        0   &\rm     $<10^{-4}$ &\rm    0 & &\rm      &\rm        0   &\rm     $<10^{-8}$ &\rm    0 \\
 [.3ex]
 &\rm  301 &\rm .502(.502) &\rm .4874(.4874) &\rm 1(1) & &\rm 5000 &\rm .500(.500) &\rm .4993(.4993) &\rm 1(1) \\
 [-.1ex]
 &\rm      &\rm          0 &\rm     $<10^{-6}$ &\rm    0 & &\rm      &\rm          0 &\rm     $<10^{-9}$ &\rm    0 \\
 \cline{2-5} \cline{7-10}
 \multirow{6}{*}{\rotatebox{90}{$t_{12}$}}
 &\rm   60 &\rm .500(.500) &\rm .4578(.4587) &\rm 1(1) & &\rm  750 &\rm .500(.500) &\rm .4961(.4961) &\rm 1(1) \\
 [-.1ex]
 &\rm      &\rm         0  &\rm     $<10^{-4}$ &\rm    0 & &\rm      &\rm         0  &\rm     $<10^{-7}$ &\rm    0 \\
 [.3ex]
 &\rm  100 &\rm .500(.500) &\rm .4731(.4735) &\rm 1(1) & &\rm 1501 &\rm .500(.500) &\rm .4977(.4977) &\rm 1(1) \\
 [-.1ex]
 &\rm      &\rm        0   &\rm     $<10^{-4}$ &\rm    0 & &\rm      &\rm        0   &\rm     $<10^{-8}$ &\rm    0 \\
 [.3ex]
 &\rm  301 &\rm .502(.502) &\rm .4889(.4889) &\rm 1(1) & &\rm 5000 &\rm .500(.500) &\rm .4994(.4994) &\rm 1(1) \\
 [-.1ex]
 &\rm      &\rm          0 &\rm     $<10^{-6}$ &\rm    0 & &\rm      &\rm          0 &\rm     $<10^{-9}$ &\rm    0 \\
 \cline{2-5} \cline{7-10}
 \multirow{6}{*}{\rotatebox{90}{$t_{6}$}}
 &\rm   60 &\rm .500(.500) &\rm .4573(.4591) &\rm 1(1) & &\rm  750 &\rm .500(.500) &\rm .4961(.4961) &\rm 1(1) \\
 [-.1ex]
 &\rm      &\rm         0  &\rm     $<10^{-4}$ &\rm    0 & &\rm      &\rm         0  &\rm     $<10^{-7}$ &\rm    0 \\
 [.3ex]
 &\rm  100 &\rm .500(.500) &\rm .4728(.4737) &\rm 1(1) & &\rm 1501 &\rm .500(.500) &\rm .4977(.4977) &\rm 1(1) \\
 [-.1ex]
 &\rm      &\rm        0   &\rm     $<10^{-4}$ &\rm    0 & &\rm      &\rm        0   &\rm     $<10^{-8}$ &\rm    0 \\
 [.3ex]
 &\rm  301 &\rm .502(.502) &\rm .4889(.4890) &\rm 1(1) & &\rm 5000 &\rm .500(.500) &\rm .4994(.4994) &\rm 1(1) \\
 [-.1ex]
 &\rm      &\rm          0 &\rm     $<10^{-6}$ &\rm    0 & &\rm      &\rm          0 &\rm     $<10^{-9}$ &\rm    0 \\
 \cline{2-5} \cline{7-10}
 \multirow{6}{*}{\rotatebox{90}{$\exp(1)$}}
 &\rm   60 &\rm .500(.500) &\rm .4609(.4632) &\rm 1(1) & &\rm  750 &\rm .500(.500) &\rm .4964(.4964) &\rm 1(1) \\
 [-.1ex]
 &\rm      &\rm         0  &\rm     $<10^{-4}$ &\rm    0 & &\rm      &\rm         0  &\rm     $<10^{-7}$ &\rm    0 \\
 [.3ex]
 &\rm  100 &\rm .500(.500) &\rm .4750(.4761) &\rm 1(1) & &\rm 1501 &\rm .500(.500) &\rm .4977(.4977) &\rm 1(1) \\
 [-.1ex]
 &\rm      &\rm        0   &\rm     $<10^{-4}$ &\rm    0 & &\rm      &\rm        0   &\rm     $<10^{-8}$ &\rm    0 \\
 [.3ex]
 &\rm  301 &\rm .502(.502) &\rm .4896(.4897) &\rm 1(1) & &\rm 5000 &\rm .500(.500) &\rm .4995(.4995) &\rm 1(1) \\
 [-.1ex]
 &\rm      &\rm          0 &\rm     $<10^{-6}$ &\rm    0 & &\rm      &\rm          0 &\rm     $<10^{-9}$ &\rm    0 \\
 \cline{2-5} \cline{7-10}
 \multirow{6}{*}{\rotatebox{90}{Log Normal}}
 &\rm   60 &\rm .500(.500) &\rm .4536(.4549) &\rm 1(1) & &\rm  750 &\rm .500(.500) &\rm .4949(.4950) &\rm 1(1) \\
 [-.1ex]
 &\rm      &\rm         0  &\rm     $<10^{-4}$ &\rm    0 & &\rm      &\rm         0  &\rm     $<10^{-6}$ &\rm    0 \\
 [.3ex]
 &\rm  100 &\rm .500(.500) &\rm .4684(.4691) &\rm 1(1) & &\rm 1501 &\rm .500(.500) &\rm .4971(.4971) &\rm 1(1) \\
 [-.1ex]
 &\rm      &\rm        0   &\rm     $<10^{-4}$ &\rm    0 & &\rm      &\rm        0   &\rm     $<10^{-8}$ &\rm    0 \\
 [.3ex]
 &\rm  301 &\rm .502(.502) &\rm .4863(.4865) &\rm 1(1) & &\rm 5000 &\rm .500(.500) &\rm .4992(.4992) &\rm 1(1) \\
 [-.1ex]
 &\rm      &\rm          0 &\rm     $<10^{-5}$ &\rm    0 & &\rm      &\rm          0 &\rm     $<10^{-8}$ &\rm    0 \\
 \cline{2-5} \cline{7-10}
 \multirow{6}{*}{\rotatebox{90}{Pareto$(15)$}}
 &\rm   60 &\rm .500(.500) &\rm .4913(.4959) &\rm 1(1) & &\rm  750 &\rm .500(.500) &\rm .4997(.4997) &\rm 1(1) \\
 [-.1ex]
 &\rm      &\rm         0  &\rm     $<10^{-4}$ &\rm    0 & &\rm      &\rm         0  &\rm     $<10^{-7}$ &\rm    0 \\
 [.3ex]
 &\rm  100 &\rm .500(.500) &\rm .4956(.4975) &\rm 1(1) & &\rm 1501 &\rm .500(.500) &\rm .4995(.4995) &\rm 1(1) \\
 [-.1ex]
 &\rm      &\rm        0   &\rm     $<10^{-5}$ &\rm    0 & &\rm      &\rm        0   &\rm     $<10^{-8}$ &\rm    0 \\
 [.3ex]
 &\rm  301 &\rm .502(.502) &\rm .4972(.4975) &\rm 1(1) & &\rm 5000 &\rm .500(.500) &\rm .4999(.4999) &\rm 1(1) \\
 [-.1ex]
 &\rm      &\rm          0 &\rm     $<10^{-6}$ &\rm    0 & &\rm      &\rm          0 &\rm    $<10^{-10}$ &\rm    0 \\
 \cline{2-5} \cline{7-10}
 \multirow{6}{*}{\rotatebox{90}{Pareto$(6)$}}
 &\rm   60 &\rm .500(.500) &\rm .4895(.4947) &\rm 1(1) & &\rm  750 &\rm .500(.500) &\rm .4995(.4996) &\rm 1(1) \\
 [-.1ex]
 &\rm      &\rm         0  &\rm     $<10^{-4}$ &\rm    0 & &\rm      &\rm         0  &\rm     $<10^{-7}$ &\rm    0 \\
 [.3ex]
 &\rm  100 &\rm .500(.500) &\rm .4965(.4968) &\rm 1(1) & &\rm 1501 &\rm .500(.500) &\rm .4994(.4995) &\rm 1(1) \\
 [-.1ex]
 &\rm      &\rm        0   &\rm     $<10^{-5}$ &\rm    0 & &\rm      &\rm        0   &\rm     $<10^{-8}$ &\rm    0 \\
 [.3ex]
 &\rm  301 &\rm .502(.502) &\rm .4969(.4973) &\rm 1(1) & &\rm 5000 &\rm .500(.500) &\rm .4999(.4999) &\rm 1(1) \\
 [-.1ex]
 &\rm      &\rm          0 &\rm     $<10^{-6}$ &\rm    0 & &\rm      &\rm          0 &\rm    $<10^{-10}$ &\rm    0 \\
 \bottomrule
 \end{tabular}}
 \end{table}

 \begin{table}[htp]
 \caption{Average of the estimated values and their empirical mean square error for various data distributions and various values of $n$, for the modified squared error loss, in the sample mean case. The notation $t_\nu(5)$ indicates that $X-5$ follows $t_\nu$.}
 \label{table.ModSq}
 \centering{\footnotesize
 \begin{tabular}
 {@{\hspace{0ex}}l@{\hspace{4.5ex}}r@{\hspace{4.5ex}}
 r@{\hspace{2.5ex}}r@{\hspace{2.5ex}}r@{\hspace{0ex}}
 c@{\hspace{7ex}}
 r@{\hspace{2.5ex}}r@{\hspace{2.5ex}}r@{\hspace{2.5ex}}r
 @{\hspace{0ex}}}
 \addlinespace
 \toprule
 & \multicolumn{9}{c}{$\begin{array}{lr}\textrm{\bf Sample Size}&\textrm{\bf Estimator(Theoretical Value)}\\&\textrm{\bf MSE}\end{array}$} \\
 \hline
 $F$ & \multicolumn{1}{l}{$n$} & \multicolumn{1}{l}{$\frac{n_1^{\rm opt}}{n}$} & \multicolumn{1}{l}{$\rho^{\rm opt}$} & \multicolumn{1}{l}{$\frac{k^{\rm opt}}{n}$}
 &
 & \multicolumn{1}{l}{$n$} & \multicolumn{1}{l}{$\frac{n_1^{\rm opt}}{n}$} & \multicolumn{1}{l}{$\rho^{\rm opt}$} & \multicolumn{1}{l}{$\frac{k^{\rm opt}}{n}$}\\
 \hline
 \multirow{6}{*}{\rotatebox{90}{$N(1,1)$}}
 &\rm   60 &\rm .597(.583) &\rm .5168(.5084) &\rm 1(1) & &\rm  750 &\rm .587(.585) &\rm .5151(.5142) &\rm 1(1) \\
 [-.1ex]
 &\rm      &\rm      .0024 &\rm        .0004 &\rm    0 & &\rm      &\rm      .0002 &\rm   $<10^{-4}$ &\rm    0 \\
 [.3ex]
 &\rm  100 &\rm .592(.580) &\rm .5165(.5109) &\rm 1(1) & &\rm 1501 &\rm .586(.586) &\rm .5149(.5145) &\rm 1(1) \\
 [-.1ex]
 &\rm      &\rm      .0016 &\rm        .0002 &\rm    0 & &\rm      &\rm      .0001 &\rm   $<10^{-4}$ &\rm    0 \\
 [.3ex]
 &\rm  301 &\rm .588(.585) &\rm .4975(.4978) &\rm 1(1) & &\rm 5000 &\rm .586(.586) &\rm .5148(.5146) &\rm 1(1) \\
 [-.1ex]
 &\rm      &\rm      .0006 &\rm   $<10^{-4}$ &\rm    0 & &\rm      &\rm $<10^{-4}$ &\rm   $<10^{-5}$ &\rm    0 \\
 \cline{2-5} \cline{7-10}
 \multirow{6}{*}{\rotatebox{90}{$U(0,1)$}}
 &\rm   60 &\rm .787(.783) &\rm .6614(.6631) &\rm 1(1) & &\rm  750 &\rm .794(.795) &\rm .6729(.6729) &\rm 1(1) \\
 [-.1ex]
 &\rm      &\rm      .0003 &\rm        .0003 &\rm    0 & &\rm      &\rm $<10^{-4}$ &\rm   $<10^{-4}$ &\rm    0 \\
 [.3ex]
 &\rm  100 &\rm .790(.790) &\rm .6662(.6673) &\rm 1(1) & &\rm 1501 &\rm .795(.795) &\rm .6733(.6734) &\rm 1(1) \\
 [-.1ex]
 &\rm      &\rm      .0002 &\rm        .0002 &\rm    0 & &\rm      &\rm $<10^{-5}$ &\rm   $<10^{-4}$ &\rm    0 \\
 [.3ex]
 &\rm  301 &\rm .793(.794) &\rm .6713(.6716) &\rm 1(1) & &\rm 5000 &\rm .795(.795) &\rm .6736(.6737) &\rm 1(1) \\
 [-.1ex]
 &\rm      &\rm $<10^{-4}$ &\rm   $<10^{-4}$ &\rm    0 & &\rm      &\rm $<10^{-5}$ &\rm   $<10^{-5}$ &\rm    0 \\
 \cline{2-5} \cline{7-10}
 \multirow{6}{*}{\rotatebox{90}{$t_{12}(5)$}}
 &\rm   60 &\rm .857(.850) &\rm .7552(.7370) &\rm 1(1) & &\rm  750 &\rm .848(.847) &\rm .7421(.7396) &\rm 1(1) \\
 [-.1ex]
 &\rm      &\rm      .0010 &\rm        .0019 &\rm    0 & &\rm      &\rm      .0002 &\rm        .0003 &\rm    0 \\
 [.3ex]
 &\rm  100 &\rm .854(.850) &\rm .7512(.7380) &\rm 1(1) & &\rm 1501 &\rm .847(.846) &\rm .7412(.7397) &\rm 1(1) \\
 [-.1ex]
 &\rm      &\rm      .0007 &\rm        .0013 &\rm    0 & &\rm      &\rm $<10^{-4}$ &\rm        .0002 &\rm    0 \\
 [.3ex]
 &\rm  301 &\rm .850(.847) &\rm .7447(.7393) &\rm 1(1) & &\rm 5000 &\rm .846(.846) &\rm .7402(.7398) &\rm 1(1) \\
 [-.1ex]
 &\rm      &\rm      .0003 &\rm        .0006 &\rm    0 & &\rm      &\rm $<10^{-4}$ &\rm   $<10^{-4}$ &\rm    0 \\
 \cline{2-5} \cline{7-10}
 \multirow{6}{*}{\rotatebox{90}{$t_{6}(5)$}}
 &\rm   60 &\rm .824(.783) &\rm .7152(.6607) &\rm 1(1) & &\rm  750 &\rm .797(.785) &\rm .6796(.6623) &\rm 1(1) \\
 [-.1ex]
 &\rm      &\rm      .0043 &\rm        .0063 &\rm    0 & &\rm      &\rm      .0015 &\rm        .0017 &\rm    0 \\
 [.3ex]
 &\rm  100 &\rm .816(.780) &\rm .7048(.6613) &\rm 1(1) & &\rm 1501 &\rm .794(.785) &\rm .6755(.6624) &\rm 1(1) \\
 [-.1ex]
 &\rm      &\rm      .0037 &\rm        .0047 &\rm    0 & &\rm      &\rm      .0011 &\rm        .0012 &\rm    0 \\
 [.3ex]
 &\rm  301 &\rm .805(.784) &\rm .6885(.6621) &\rm 1(1) & &\rm 5000 &\rm .790(.785) &\rm .6701(.6624) &\rm 1(1) \\
 [-.1ex]
 &\rm      &\rm      .0021 &\rm        .0026 &\rm    0 & &\rm      &\rm      .0007 &\rm        .0006 &\rm    0 \\
 \cline{2-5} \cline{7-10}
 \multirow{6}{*}{\rotatebox{90}{$\exp(1)$}}
 &\rm   60 &\rm .530(.500) &\rm .5020(.4972) &\rm 1(1) & &\rm  750 &\rm .500(.500) &\rm .4998(.4998) &\rm 1(1) \\
 [-.1ex]
 &\rm      &\rm      .0028 &\rm        .0002 &\rm    0 & &\rm      &\rm $<10^{-5}$ &\rm   $<10^{-7}$ &\rm    0 \\
 [.3ex]
 &\rm  100 &\rm .516(.500) &\rm .5003(.4983) &\rm 1(1) & &\rm 1501 &\rm .500(.500) &\rm .4998(.4998) &\rm 1(1) \\
 [-.1ex]
 &\rm      &\rm      .0012 &\rm   $<10^{-4}$ &\rm    0 & &\rm      &\rm $<10^{-7}$ &\rm   $<10^{-9}$ &\rm    0 \\
 [.3ex]
 &\rm  301 &\rm .504(.502) &\rm .4991(.4988) &\rm 1(1) & &\rm 5000 &\rm .500(.500) &\rm .4999(.4999) &\rm 1(1) \\
 [-.1ex]
 &\rm      &\rm $<10^{-4}$ &\rm   $<10^{-5}$ &\rm    0 & &\rm      &\rm $<10^{-11}$&\rm  $<10^{-11}$ &\rm    0 \\
 \cline{2-5} \cline{7-10}
 \multirow{6}{*}{\rotatebox{90}{Log Normal}}
 &\rm   60 &\rm .507(.500) &\rm .4986(.4997) &\rm 1(1) & &\rm  750 &\rm .500(.500) &\rm .4999(.4999) &\rm 1(1) \\
 [-.1ex]
 &\rm      &\rm      .0006 &\rm   $<10^{-4}$ &\rm    0 & &\rm      &\rm          0 &\rm   $<10^{-8}$ &\rm    0 \\
 [.3ex]
 &\rm  100 &\rm .501(.500) &\rm .4988(.4998) &\rm 1(1) & &\rm 1501 &\rm .500(.500) &\rm .4996(.4997) &\rm 1(1) \\
 [-.1ex]
 &\rm      &\rm $<10^{-4}$ &\rm   $<10^{-5}$ &\rm    0 & &\rm      &\rm          0 &\rm   $<10^{-9}$ &\rm    0 \\
 [.3ex]
 &\rm  301 &\rm .502(.502) &\rm .4984(.4983) &\rm 1(1) & &\rm 5000 &\rm .500(.500) &\rm .4999(.5000) &\rm 1(1) \\
 [-.1ex]
 &\rm      &\rm          0 &\rm   $<10^{-7}$ &\rm    0 & &\rm      &\rm          0 &\rm  $<10^{-10}$ &\rm    0 \\
 \cline{2-5} \cline{7-10}
 \multirow{6}{*}{\rotatebox{90}{Pareto$(15)$}}
 &\rm   60 &\rm .919(.883) &\rm .8536(.7991) &\rm 1(1) & &\rm  750 &\rm .895(.887) &\rm .8128(.7997) &\rm 1(1) \\
 [-.1ex]
 &\rm      &\rm      .0026 &\rm        .0061 &\rm    0 & &\rm      &\rm      .0007 &\rm        .0016 &\rm    0 \\
 [.3ex]
 &\rm  100 &\rm .912(.890) &\rm .8428(.7994) &\rm 1(1) & &\rm 1501 &\rm .892(.887) &\rm .8078(.7997) &\rm1(1)) \\
 [-.1ex]
 &\rm      &\rm      .0017 &\rm        .0048 &\rm    0 & &\rm      &\rm      .0005 &\rm        .0010 &\rm    0 \\
 [.3ex]
 &\rm  301 &\rm .901(.887) &\rm .8236(.7996) &\rm 1(1) & &\rm 5000 &\rm .889(.887) &\rm .8032(.7997) &\rm 1(1) \\
 [-.1ex]
 &\rm      &\rm      .0011 &\rm        .0025 &\rm    0 & &\rm      &\rm      .0002 &\rm        .0005 &\rm    0 \\
 \cline{2-5} \cline{7-10}
 \multirow{6}{*}{\rotatebox{90}{Pareto$(6)$}}
 &\rm   60 &\rm .786(.617) &\rm .6780(.5260) &\rm 1(1) & &\rm  750 &\rm .697(.615) &\rm .5861(.5264) &\rm 1(1) \\
 [-.1ex]
 &\rm      &\rm      .0367 &\rm        .0308 &\rm    0 & &\rm      &\rm      .0121 &\rm        .0060 &\rm    0 \\
 [.3ex]
 &\rm  100 &\rm .764(.610) &\rm .6545(.5261) &\rm 1(1) & &\rm 1501 &\rm .680(.615) &\rm .5735(.5264) &\rm 1(1) \\
 [-.1ex]
 &\rm      &\rm      .0313 &\rm        .0228 &\rm    0 & &\rm      &\rm      .0086 &\rm        .0039 &\rm    0 \\
 [.3ex]
 &\rm  301 &\rm .722(.615) &\rm .6123(.5263) &\rm 1(1) & &\rm 5000 &\rm .659(.615) &\rm .5565(.5264) &\rm 1(1) \\
 [-.1ex]
 &\rm      &\rm      .0181 &\rm        .0112 &\rm    0 & &\rm      &\rm      .0051 &\rm        .0018 &\rm    0 \\
 \bottomrule
 \end{tabular}}
 \end{table}

 \begin{table}[htp]
 \caption{Average of the estimated values and their empirical mean square error for various data distributions and various values of $n$, for the double squared error loss, in the case of sample mean.}
 \label{table.DouSq}
 \centering{\footnotesize
 \begin{tabular}
 {@{\hspace{0ex}}l@{\hspace{3.5ex}}r@{\hspace{3.5ex}}
 r@{\hspace{2ex}}r@{\hspace{2ex}}r@{\hspace{0ex}}
 c@{\hspace{6ex}}
 r@{\hspace{2ex}}r@{\hspace{2ex}}r@{\hspace{2ex}}r
 @{\hspace{0ex}}}
 \addlinespace
 \toprule
 & \multicolumn{9}{c}{$\begin{array}{lr}\textrm{\bf Sample Size}&\textrm{\bf Estimator(Theoretical Value)}\\&\textrm{\bf MSE}\end{array}$} \\
 \hline
 $F$ & \multicolumn{1}{l}{$n$} & \multicolumn{1}{l}{$\frac{n_1^{\rm opt}}{n}$} & \multicolumn{1}{l}{$\rho^{\rm opt}$} & \multicolumn{1}{l}{$\frac{k^{\rm opt}}{n}$}
 &
 & \multicolumn{1}{l}{$n$} & \multicolumn{1}{l}{$\frac{n_1^{\rm opt}}{n}$} & \multicolumn{1}{l}{$\rho^{\rm opt}$} & \multicolumn{1}{l}{$\frac{k^{\rm opt}}{n}$}\\
 \hline
 \multirow{6}{*}{\rotatebox{90}{$N(0,1)$}}
 &\rm   60 &\rm .500(.500) &\rm  .500(.500) &\rm .0333(.0333) & &\rm  750 &\rm .500(.500) &\rm  .500(.500) &\rm .0027(.0027) \\
 [-.1ex]
 &\rm      &\rm          0 &\rm  $<10^{-7}$ &\rm            0 & &\rm      &\rm          0 &\rm $<10^{-12}$ &\rm            0 \\
 [.3ex]
 &\rm  100 &\rm .500(.500) &\rm  .500(.500) &\rm .0200(.0200) & &\rm 1501 &\rm .500(.500) &\rm  .500(.500) &\rm .0127(.0127) \\
 [-.1ex]
 &\rm      &\rm          0 &\rm  $<10^{-8}$ &\rm            0 & &\rm      &\rm          0 &\rm $<10^{-16}$ &\rm            0 \\
 [.3ex]
 &\rm  301 &\rm .502(.502) &\rm  .498(.498) &\rm .0233(.0233) & &\rm 5000 &\rm .500(.500) &\rm  .500(.500) &\rm .0004(.0004) \\
 [-.1ex]
 &\rm      &\rm          0 &\rm $<10^{-12}$ &\rm            0 & &\rm      &\rm          0 &\rm $<10^{-15}$ &\rm            0 \\
 \cline{2-5} \cline{7-10}
 \multirow{6}{*}{\rotatebox{90}{$U(-1,1)$}}
 &\rm   60 &\rm .500(.500) &\rm .500(.500) &\rm .0333(.0333) & &\rm  750 &\rm .500(.500) &\rm  .500(.500) &\rm .0027(.0027) \\
 [-.1ex]
 &\rm      &\rm          0 &\rm $<10^{-6}$ &\rm            0 & &\rm      &\rm          0 &\rm $<10^{-10}$ &\rm            0 \\
 [.3ex]
 &\rm  100 &\rm .500(.500) &\rm .500(.500) &\rm .0200(.0200) & &\rm 1501 &\rm .500(.500) &\rm  .500(.500) &\rm .0127(.0127) \\
 [-.1ex]
 &\rm      &\rm          0 &\rm $<10^{-7}$ &\rm            0 & &\rm      &\rm          0 &\rm $<10^{-11}$ &\rm            0 \\
 [.3ex]
 &\rm  301 &\rm .502(.502) &\rm .498(.498) &\rm .0233(.0233) & &\rm 5000 &\rm .500(.500) &\rm  .500(.500) &\rm .0004(.0004) \\
 [-.1ex]
 &\rm      &\rm          0 &\rm $<10^{-9}$ &\rm            0 & &\rm      &\rm          0 &\rm $<10^{-14}$ &\rm            0 \\
 \cline{2-5} \cline{7-10}
 \multirow{6}{*}{\rotatebox{90}{$t_{12}$}}
 &\rm   60 &\rm .500(.500) &\rm  .500(.500) &\rm .0333(.0333) & &\rm  750 &\rm .500(.500) &\rm  .500(.500) &\rm .0027(.0027) \\
 [-.1ex]
 &\rm      &\rm          0 &\rm  $<10^{-7}$ &\rm            0 & &\rm      &\rm          0 &\rm $<10^{-13}$ &\rm            0 \\
 [.3ex]
 &\rm  100 &\rm .500(.500) &\rm  .500(.500) &\rm .0200(.0200) & &\rm 1501 &\rm .500(.500) &\rm  .500(.500) &\rm .0127(.0127) \\
 [-.1ex]
 &\rm      &\rm          0 &\rm  $<10^{-8}$ &\rm            0 & &\rm      &\rm          0 &\rm $<10^{-15}$ &\rm            0 \\
 [.3ex]
 &\rm  301 &\rm .502(.502) &\rm  .498(.498) &\rm .0233(.0233) & &\rm 5000 &\rm .500(.500) &\rm  .500(.500) &\rm .0004(.0004) \\
 [-.1ex]
 &\rm      &\rm          0 &\rm $<10^{-12}$ &\rm            0 & &\rm      &\rm          0 &\rm $<10^{-16}$ &\rm            0 \\
 \cline{2-5} \cline{7-10}
 \multirow{6}{*}{\rotatebox{90}{$t_{9}$}}
 &\rm   60 &\rm .500(.500) &\rm  .500(.500) &\rm .0333(.0333) & &\rm  750 &\rm .500(.500) &\rm  .500(.500) &\rm .0027(.0027) \\
 [-.1ex]
 &\rm      &\rm          0 &\rm  $<10^{-7}$ &\rm            0 & &\rm      &\rm          0 &\rm $<10^{-13}$ &\rm            0 \\
 [.3ex]
 &\rm  100 &\rm .500(.500) &\rm  .500(.500) &\rm .0200(.0200) & &\rm 1501 &\rm .500(.500) &\rm  .500(.500) &\rm .0127(.0127) \\
 [-.1ex]
 &\rm      &\rm          0 &\rm  $<10^{-8}$ &\rm            0 & &\rm      &\rm          0 &\rm $<10^{-15}$ &\rm            0 \\
 [.3ex]
 &\rm  301 &\rm .502(.502) &\rm  .498(.498) &\rm .0233(.0233) & &\rm 5000 &\rm .500(.500) &\rm  .500(.500) &\rm .0004(.0004) \\
 [-.1ex]
 &\rm      &\rm          0 &\rm $<10^{-12}$ &\rm            0 & &\rm      &\rm          0 &\rm $<10^{-16}$ &\rm            0 \\
 \cline{2-5} \cline{7-10}
 \multirow{6}{*}{\rotatebox{90}{$\exp(1)$}}
 &\rm   60 &\rm .500(.500) &\rm .500(.500) &\rm .0368(.0333) & &\rm  750 &\rm .500(.500) &\rm  .500(.500) &\rm .0027(.0027) \\
 [-.1ex]
 &\rm      &\rm          0 &\rm $<10^{-6}$ &\rm        .0034 & &\rm      &\rm          0 &\rm  $<10^{-9}$ &\rm            0 \\
 [.3ex]
 &\rm  100 &\rm .500(.500) &\rm .500(.500) &\rm .0202(.0200) & &\rm 1501 &\rm .500(.500) &\rm  .500(.500) &\rm .0127(.0127) \\
 [-.1ex]
 &\rm      &\rm          0 &\rm $<10^{-7}$ &\rm        .0002 & &\rm      &\rm          0 &\rm $<10^{-10}$ &\rm            0 \\
 [.3ex]
 &\rm  301 &\rm .502(.502) &\rm .498(.498) &\rm .0233(.0233) & &\rm 5000 &\rm .500(.500) &\rm  .500(.500) &\rm .0004(.0004) \\
 [-.1ex]
 &\rm      &\rm          0 &\rm $<10^{-8}$ &\rm            0 & &\rm      &\rm          0 &\rm $<10^{-12}$ &\rm            0 \\
 \cline{2-5} \cline{7-10}
 \multirow{6}{*}{\rotatebox{90}{Log Normal}}
 &\rm   60 &\rm .500(.500) &\rm .500(.500) &\rm .0338(.0333) & &\rm  750 &\rm .500(.500) &\rm  .500(.500) &\rm .0027(.0027) \\
 [-.1ex]
 &\rm      &\rm          0 &\rm $<10^{-6}$ &\rm        .0005 & &\rm      &\rm          0 &\rm $<10^{-11}$ &\rm            0 \\
 [.3ex]
 &\rm  100 &\rm .500(.500) &\rm .500(.500) &\rm .0200(.0200) & &\rm 1501 &\rm .500(.500) &\rm  .500(.500) &\rm .0127(.0127) \\
 [-.1ex]
 &\rm      &\rm          0 &\rm $<10^{-7}$ &\rm            0 & &\rm      &\rm          0 &\rm $<10^{-12}$ &\rm            0 \\
 [.3ex]
 &\rm  301 &\rm .502(.502) &\rm .498(.498) &\rm .0233(.0233) & &\rm 5000 &\rm .500(.500) &\rm  .500(.500) &\rm .0004(.0004) \\
 [-.1ex]
 &\rm      &\rm          0 &\rm $<10^{-9}$ &\rm            0 & &\rm      &\rm          0 &\rm $<10^{-14}$ &\rm            0 \\
 \cline{2-5} \cline{7-10}
 \multirow{6}{*}{\rotatebox{90}{Pareto$(15)$}}
 &\rm   60 &\rm .500(.500) &\rm .495(.499) &\rm     1(1) & &\rm  750 &\rm .500(.500) &\rm  .500(.500) &\rm   .9999(1) \\
 [-.1ex]
 &\rm      &\rm          0 &\rm $<10^{-4}$ &\rm        0 & &\rm      &\rm          0 &\rm  $<10^{-8}$ &\rm $<10^{-4}$ \\
 [.3ex]
 &\rm  100 &\rm .500(.500) &\rm .498(.499) &\rm     1(1) & &\rm 1501 &\rm .500(.500) &\rm  .500(.500) &\rm   .9999(1) \\
 [-.1ex]
 &\rm      &\rm          0 &\rm $<10^{-5}$ &\rm        0 & &\rm      &\rm          0 &\rm  $<10^{-8}$ &\rm $<10^{-4}$ \\
 [.3ex]
 &\rm  301 &\rm .502(.502) &\rm .498(.498) &\rm .9998(1) & &\rm 5000 &\rm .500(.500) &\rm  .500(.500) &\rm   .9999(1) \\
 [-.1ex]
 &\rm      &\rm          0 &\rm $<10^{-6}$ &\rm    .0002 & &\rm      &\rm          0 &\rm $<10^{-10}$ &\rm $<10^{-4}$ \\
 \cline{2-5} \cline{7-10}
 \multirow{6}{*}{\rotatebox{90}{Pareto$(9)$}}
 &\rm   60 &\rm .500(.500) &\rm .496(.500) &\rm .9930(1) & &\rm  750 &\rm .500(.500) &\rm  .500(.500) &\rm .9809(1) \\
 [-.1ex]
 &\rm      &\rm          0 &\rm $<10^{-4}$ &\rm    .0067 & &\rm      &\rm          0 &\rm  $<10^{-8}$ &\rm    .0191 \\
 [.3ex]
 &\rm  100 &\rm .500(.500) &\rm .498(.500) &\rm .9897(1) & &\rm 1501 &\rm .500(.500) &\rm  .500(.500) &\rm .9818(1) \\
 [-.1ex]
 &\rm      &\rm          0 &\rm $<10^{-5}$ &\rm    .0100 & &\rm      &\rm          0 &\rm  $<10^{-8}$ &\rm    .0188 \\
 [.3ex]
 &\rm  301 &\rm .502(.502) &\rm .498(.498) &\rm .9998(1) & &\rm 5000 &\rm .500(.500) &\rm  .500(.500) &\rm .9815(1) \\
 [-.1ex]
 &\rm      &\rm          0 &\rm $<10^{-6}$ &\rm    .0002 & &\rm      &\rm          0 &\rm $<10^{-10}$ &\rm    .0187 \\
 \bottomrule
 \end{tabular}}
 \end{table}

 When cross validation estimators of the generalization error are used the usual recommendation made is to use 70\% - 80\% of the data for training and the remaining for testing. Figures \ref{fig.Sq} and \ref{fig.ModSq} plot the variance of the test set error as a function of the sample splitting for squared error and modified squared error losses. The graphs show that the test set variance is minimized for the optimal value of $n_1$. On the other hand, Figure \ref{fig.n/2} plots the relative efficiency of $\h{\mu}_{{\rm CV},J}$ against the ratio $n_1/n$, for $J=1$ (it corresponds to $\Var(\h{\mu}_{j})$), $10$ and $15$. The relative efficiency of $\h{\mu}_{{\rm CV},J}$ is defined as the ratio of the variance of $\h{\mu}_{{\rm CV},J}$ for any given data splitting over the variance of $\h{\mu}_{{\rm CV},J}$ at the optimal data splitting. Figure \ref{fig.n/2} clearly shows that, for all sample sizes, all loss functions with $n_1^{\rm opt}=\lfloor n/2 \rfloor$, and all distributions, the popular recommendations in terms of splitting exhibit substantial increase in the variance of $\h{\mu}_{{\rm CV},J}$. For example, when 75\% of the total sample size is used for training, leaving 25\% for testing, the increase in the variance of $\h{\mu}_{{\rm CV},10}$ when squared error loss is used and $n=24$ is 17.3\%, while when 80\% (or 90\%) of the total sample is used for training this increase is 24.4\% (or 88.6\%). And if $J=15$, then the increase in variance is 11.86\% when the training set is 75\% of the sample; it is 16.8\% (or 61.2\%) when 80\% (or 90\%) of the sample is used for training.

 Notice that $\h{\Var}(\h{\mu}_{{\rm CV},\infty})=\h{C}$ is almost unaffected by the splitting mechanism across all loss functions, independently of data distribution. This provides further justification of our selection of the optimality rule \eqref{eq.opt-n1}. This indicates that \eqref{eq.opt-n1} is equivalent to minimizing $\sfv-\sfc$ and to minimizing $\Var(\h{\mu}_{{\rm CV},J})$ for all $J$ (see also relation \eqref{eq.c-rCV}).

 Furthermore, $\rho^{\rm opt}=.5$ when $n_1^{\rm opt}=\lfloor n/2 \rfloor$ (both theoretically and empirically), so that $\sfc^{\rm opt}=.5\sfv^{\rm opt}$, and the limited variance $\sfc$ is almost unaffected by the splitting. In view of Figure \ref{fig.n/2}, $\sfv(.75)\simeq1.9\sfv^{\rm opt}\simeq3.8\sfc^{\rm opt}\simeq3.8\sfc(.75)$, while $\sfv(.80)\simeq4.6\sfc(.80)$, $\sfv(.85)\simeq6.2\sfc(.85)$ and $\sfv(.90)\simeq10\sfc(.90)$, where $\sfv(\pi)$ denotes the variance of the test set error $\h{\mu}_j$ for training sample size $n_1=\pi n$. The notations $\sfc(\pi)$ and $\rho(\pi)$ have similar interpretation. Thus, $\rho^{\rm opt}\simeq.5$, $\rho(.75)\simeq.26$, $\rho(.80)\simeq.22$,$\rho(.85)\simeq.16$, and $\rho(.90)\simeq.1$. The relationship of these results to the resampling effectiveness and reduction ratio of $\h{\mu}_{{\rm CV},J}$ are shown in Table \ref{table.RE,RR.for_n/2} for $J=10$, $15$ and when the training sample size is $.5n$ ($=n_1^{\rm opt}$), $.75n$, $.80n$, $.85n$ and $.90n$. These results show that when the training sample size in not optimal then we need to increase the resampling size to obtain acceptable levels of reduction ratio and/or resampling effectiveness, thereby increasing the computational burden of the procedure.

\begin{figure}[htp]
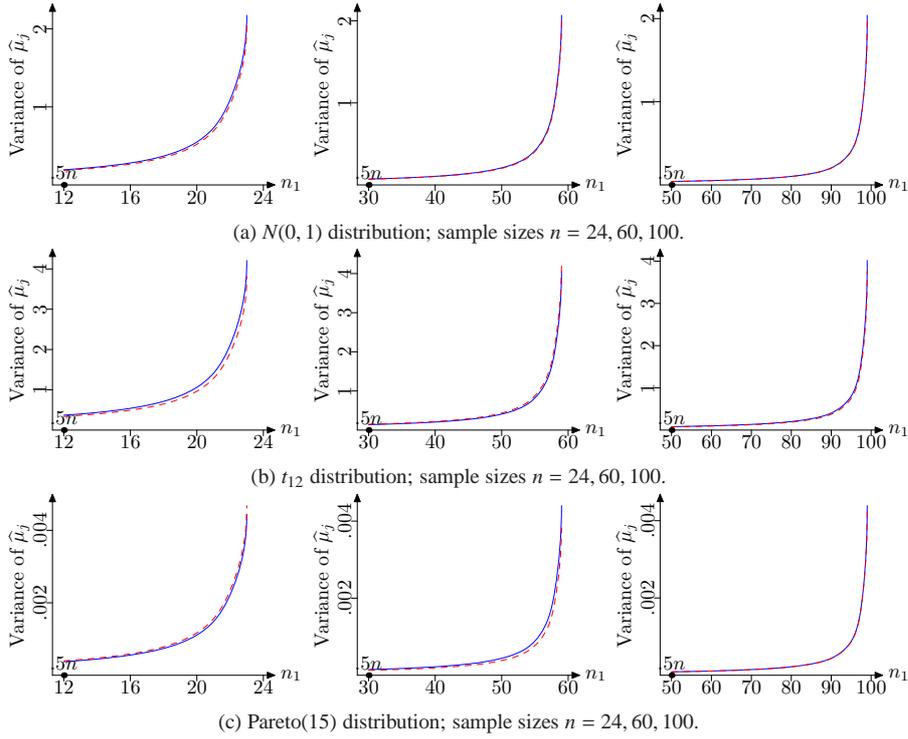

\centering
\begin{subfigure}[a]{1\textwidth}
        \includegraphics[scale = .75]{Fig.311}
\hfill
        \includegraphics[scale = .75]{Fig.312}
\hfill
        \includegraphics[scale = .75]{Fig.313}
        \caption{$N(0,1)$ distribution; sample sizes $n=24,60,100$.}
\end{subfigure}

\begin{subfigure}[a]{1\textwidth}
        \includegraphics[scale = .75]{Fig.321}
\hfill
        \includegraphics[scale = .75]{Fig.322}
\hfill
        \includegraphics[scale = .75]{Fig.323}
        \caption{$t_{12}$ distribution; sample sizes $n=24,60,100$.}
\end{subfigure}

\begin{subfigure}[a]{1\textwidth}
        \includegraphics[scale = .75]{Fig.331}
\hfill
        \includegraphics[scale = .75]{Fig.332}
\hfill
        \includegraphics[scale = .75]{Fig.333}
        \caption{Pareto$(15)$ distribution; sample sizes $n=24,60,100$.}
\end{subfigure}

\caption{Theoretical (blue solid line) and empirical (red dashed line, $N=500$ Monte Carlo repetitions), values of the variance of $\h{\mu}_j$ for various data distributions, total sample $n$ values and $n_1=\lfloor n/2 \rfloor,\ldots,n-1$. The loss is squared error loss and the decision rule is $\overline{X}$.}
\label{fig.Sq}
\end{figure}

\begin{figure}[htp]
\begin{subfigure}[a]{1\textwidth}
        \includegraphics[scale = .75]{Fig.411}
\hfill
        \includegraphics[scale = .75]{Fig.412}
\hfill
        \includegraphics[scale = .75]{Fig.413}
        \caption{$N(1,1)$ distribution; sample sizes $n=24,60,100$.}
\end{subfigure}

\begin{subfigure}[a]{1\textwidth}
        \includegraphics[scale = .75]{Fig.421}
\hfill
        \includegraphics[scale = .75]{Fig.422}
\hfill
        \includegraphics[scale = .75]{Fig.423}
        \caption{$t_{12}(5)$ distribution; sample sizes $n=24,60,100$.}
\end{subfigure}

\begin{subfigure}[a]{1\textwidth}
        \includegraphics[scale = .75]{Fig.431}
\hfill
        \includegraphics[scale = .75]{Fig.432}
\hfill
        \includegraphics[scale = .75]{Fig.433}
        \caption{Pareto$(15)$ distribution; sample sizes $n=24,60,100$.}
\end{subfigure}

\caption{Theoretical (blue solid line) and empirical (red dashed line, $N=500$ Monte Carlo repetitions), values of the variance of $\h{\mu}_j$ for various data distributions, total sample $n$ values and $n_1=\lfloor n/2 \rfloor,\ldots,n-1$. The loss is modified squared error loss and the decision rule is $\overline{X}$.}
\label{fig.ModSq}
\end{figure}

\begin{figure}[htp]
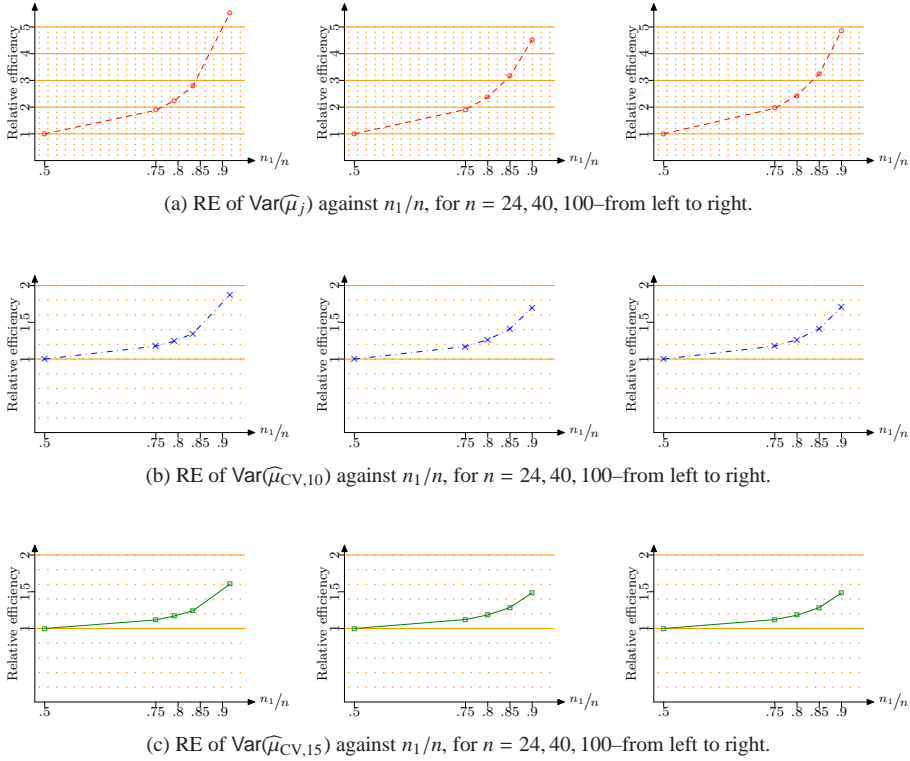

\centering
\begin{subfigure}[a]{1\textwidth}
        \includegraphics[scale = .65]{Fig.511}
\hfill
        \includegraphics[scale = .65]{Fig.512}
\hfill
        \includegraphics[scale = .65]{Fig.513}
\caption{RE of $\Var(\h{\mu}_j)$ against $n_1/n$, for $n=24,40,100$--from left to right.}
\label{sfig.V}
\end{subfigure}
\vspace{.7cm}

\begin{subfigure}[a]{1\textwidth}
        \includegraphics[scale = .65]{Fig.521}
\hfill
        \includegraphics[scale = .65]{Fig.522}
\hfill
        \includegraphics[scale = .65]{Fig.523}
\caption{RE of $\Var(\h{\mu}_{{\rm CV},10})$ against $n_1/n$, for $n=24,40,100$--from left to right.}
\label{sfig.CV10}
\end{subfigure}

\vspace{.7cm}
\begin{subfigure}[a]{1\textwidth}
        \includegraphics[scale = .65]{Fig.531}
\hfill
        \includegraphics[scale = .65]{Fig.532}
\hfill
        \includegraphics[scale = .65]{Fig.533}
\caption{RE of $\Var(\h{\mu}_{{\rm CV},15})$ against $n_1/n$, for $n=24,40,100$--from left to right.}
\label{sfig.CV15}
\end{subfigure}
\caption{Relative efficiency (RE) of $\Var(\h{\mu}_j)$ [sub-figure \ref{sfig.V}], $\Var(\h{\mu}_{{\rm CV},10})$ [sub-figure \ref{sfig.CV10}] and $\Var(\h{\mu}_{{\rm CV},15})$ [sub-figure \ref{sfig.CV15}] (case of sample mean), $N=10000$ Monte Carlo repetitions, for all loss functions with $n_1^{\rm opt}=\lfloor n/2 \rfloor$.}
\label{fig.n/2}
\end{figure}

\begin{table}[htp]
 \caption{The resampling effectiveness and reduction ratio of $\h{\mu}_{{\rm CV}, J}$, $J=10,15$, for the loss function which have $n_1^{\rm opt}=\lfloor n/2 \rfloor$, in the sample mean case.}
 \label{table.RE,RR.for_n/2}
 \centering{
 \begin{tabular}{@{\hspace{0ex}}c@{\hspace{12ex}}c@{\hspace{12ex}}c@{\hspace{17ex}}c@{\hspace{12ex}}c@{\hspace{12ex}}c@{\hspace{0ex}}}
 \addlinespace
 \toprule
  & \multicolumn{2}{c}{\hspace{-17ex}Resampling Effectiveness} & & \multicolumn{2}{c}{Reduction Ratio}\\
 \cline{2-6}
 $n_1$ & $J=10$ & $J=15$ & & $J=10$ & $J=15$ \\
 \hline
 $.50n$ & 90\% & 94\% & & .010 & .0045\\
 $.75n$ & 78\% & 84\% & & .025 & .0114\\
 $.80n$ & 74\% & 81\% & & .029 & .0137\\
 $.85n$ & 66\% & 74\% & & .038 & .0185\\
 $.90n$ & 53\% & 63\% & & .053 & .0268\\
 \bottomrule
 \end{tabular}}
 \end{table}

\subsection{Regression and classification via logistic regression}
\label{ssec:simu.regression}
We empirically now study the variance of generalization error in the cases of linear regression and classification via logistic regression.

Date were generated as $y_i=1+X_1+X_2-X_3+X_4+\varepsilon_i$, $i=1,2,\ldots,n$, where the sample size $n=40$, $60$, $100$ and $200$ and the error vector $\bm{\varepsilon}_n$ follows $N_n(\bm{0},I)$ distribution. The four covariates were generated only once as follows. The first covariate $X_1$ is a binary variable generated from ${\rm Bernoulli}(.6)$ distribution, the second variable is generated from ${\rm Poisson}(2)$ distribution, the third variable is generated from the $U(0,5)$ distribution and variable $X_4$ is generated from the $U(0,3)$ distribution. The number of Monte Carlo repetitions equals 5000.

Table \ref{table.SqRegr} and Figure \ref{fig.SqReg} present the results; notice that, both the table and figure, indicate an increase in the variance of the test set error, $\Var(\h{\mu}_j)$, as well as in $\Var(\h{\mu}_{{\rm CV},J})$ with $J=10,15$, when the training sample size moves away from the optimal value of $n_1=\lfloor n/2 \rfloor$. This is true for all sample sizes.

Table \ref{table.ZeroOne} presents relevant results for classification via logistic regression. The model contains three covariates, $X_1$ is generated once as ${\rm Bernoulli}(.6)$, $X_2$ is ${\rm Poisson}(2)$ and $X_3$ is generated from discrete uniform on the set $\{0,1,2,3,4\}$. The vector of errors is again $N_n(\bm{0},I)$, and we generate 50 Monte Carlo samples. Table \ref{table.ZeroOne} presents estimates of the $\Var(\h{\mu}_j)$ for two values of the sample size $n$, 60 and 100, and for different values of the training sample size $n_1$. Again, $\Var(\h{\mu}_j)$ increases if the popular choices of $n_1$, $.75n$ or $.80n$, are used, and $\Var(\h{\mu}_j)$ is minimized when $n_1=\lfloor n/2 \rfloor$, and this holds for all sample sizes and error distributions.

\begin{table}[htp]
 \caption{Monte Carlo (5000 repetitions) estimators of ${n}_1^{\rm opt}$, and ${\Var}(\h{\mu}_{{\rm CV},J})$, $J=1,10,15,\infty$, for various values the total sample size $n$ and training size $n_1$. The loss function is squared error and the regression model has four covariates, with error vector following $N_n(\bm{0},I)$.}
 \label{table.SqRegr}
 \centering{\scriptsize
 \begin{tabular}
 {@{\hspace{0ex}}l@{\hspace{6ex}}l@{\hspace{4ex}}r@{\hspace{6ex}}c@{\hspace{8.5ex}}c@{\hspace{8.5ex}}c@{\hspace{8.5ex}}c@{\hspace{0ex}}}
 \addlinespace
 \toprule
 ~~$n$ & $\h{n}_1^{\rm opt}~\left({\rm MSE}(\h{n}_1^{\rm opt})\right)$ & $n_1~~~$ & $\h{\Var}(\h{\mu}_j)$ & $\h{\Var}(\h{\mu}_{{\rm CV},10})$ & $\h{\Var}(\h{\mu}_{{\rm CV},15})$ & $\h{\Var}(\h{\mu}_{{\rm CV},\infty})$ \\
 \hline
 ~40 & 20(0) & $\h{n}_1^{\rm opt}=20$ & 1.132 & 0.622 & 0.603 & 0.565 \\
 [-.1ex]
 &    & $.75n=40$                     & 1.934 & 0.685 & 0.638 & 0.546 \\
 [-.1ex]
 &    & $.80n=32$                     & 2.366 & 0.726 & 0.665 & 0.544 \\
 [-.1ex]
 &    & $.85n=34$                     & 3.094 & 0.797 & 0.712 & 0.541 \\
 [-.1ex]
 &    & $.90n=36$                     & 4.560 & 0.942 & 0.808 & 0.540 \\
 [1ex]
 ~60 & 30(0) & $\h{n}_1^{\rm opt}=30$ & 0.710 & 0.390 & 0.378 & 0.355 \\
 [-.1ex]
 &    & $.75n=45$                     & 1.276 & 0.439 & 0.408 & 0.346 \\
 [-.1ex]
 &    & $.80n=48$                     & 1.573 & 0.468 & 0.427 & 0.345 \\
 [-.1ex]
 &    & $.85n=51$                     & 2.071 & 0.517 & 0.459 & 0.344 \\
 [-.1ex]
 &    & $.90n=54$                     & 3.071 & 0.616 & 0.525 & 0.343 \\
 [1ex]
 100 & 50(0) & $\h{n}_1^{\rm opt}=50$ & 0.400 & 0.220 & 0.213 & 0.200 \\
 [-.1ex]
 &    & $.75n=75$                     & 0.749 & 0.252 & 0.234 & 0.197 \\
 [-.1ex]
 &    & $.80n=80$                     & 0.929 & 0.270 & 0.245 & 0.196 \\
 [-.1ex]
 &    & $.85n=85$                     & 1.229 & 0.299 & 0.265 & 0.196 \\
 [-.1ex]
 &    & $.90n=90$                     & 1.831 & 0.359 & 0.305 & 0.196 \\
 [1ex]
 200 &100(0) & $\h{n}_1^{\rm opt}=100$& 0.188 & 0.103 & 0.100 & 0.094 \\
 [-.1ex]
 &    & $.75n=150$                    & 0.363 & 0.120 & 0.111 & 0.093 \\
 [-.1ex]
 &    & $.80n=160$                    & 0.452 & 0.129 & 0.117 & 0.093 \\
 [-.1ex]
 &    & $.85n=170$                    & 0.600 & 0.143 & 0.127 & 0.093 \\
 [-.1ex]
 &    & $.90n=180$                    & 0.898 & 0.173 & 0.147 & 0.093 \\
 \bottomrule
 \end{tabular}}
 \end{table}

\begin{figure}[htp]
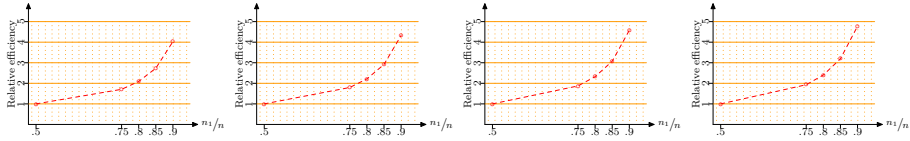
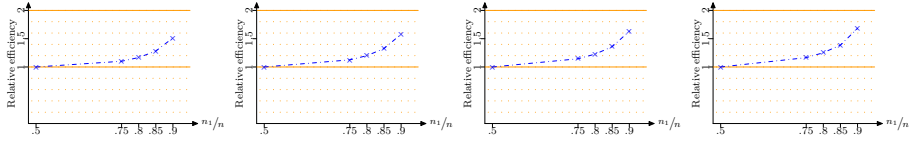
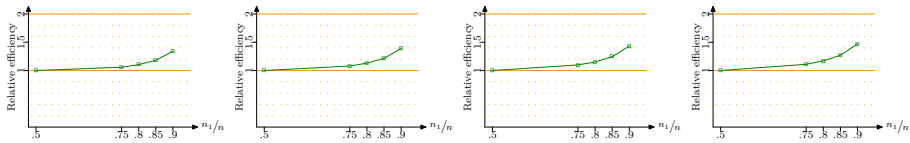

\centering
\begin{subfigure}[a]{1\textwidth}
        \includegraphics[scale = .5]{Fig.611}
\hfill
        \includegraphics[scale = .5]{Fig.612}
\hfill
        \includegraphics[scale = .5]{Fig.613}
\hfill
        \includegraphics[scale = .5]{Fig.614}
\caption{RE of $\Var(\h{\mu}_j)$ against $n_1/n$, for $n=40,60,100,200$--from left to right.}
\label{sfig.V.Reg}
\end{subfigure}
\vspace{.7cm}

\begin{subfigure}[a]{1\textwidth}
        \includegraphics[scale = .5]{Fig.621}
\hfill
        \includegraphics[scale = .5]{Fig.622}
\hfill
        \includegraphics[scale = .5]{Fig.623}
\hfill
        \includegraphics[scale = .5]{Fig.624}
\caption{RE of $\Var(\h{\mu}_{{\rm CV},10})$ against $n_1/n$, for $n=40,60,100,200$--from left to right.}
\label{sfig.CV10.Reg}
\end{subfigure}

\vspace{.7cm}
\begin{subfigure}[a]{1\textwidth}
        \includegraphics[scale = .5]{Fig.631}
\hfill
        \includegraphics[scale = .5]{Fig.632}
\hfill
        \includegraphics[scale = .5]{Fig.633}
\hfill
        \includegraphics[scale = .5]{Fig.634}
\caption{RE of $\Var(\h{\mu}_{{\rm CV},15})$ against $n_1/n$, for $n=40,60,100,200$--from left to right.}
\label{sfig.CV15.Reg}
\end{subfigure}
\caption{The relative efficiency (RE), in the regression case, of $\Var(\h{\mu}_j)$ [sub-figure \ref{sfig.V.Reg}], $\Var(\h{\mu}_{{\rm CV},10})$ [sub-figure \ref{sfig.CV10.Reg}] and $\Var(\h{\mu}_{{\rm CV},15})$ [sub-figure \ref{sfig.CV15.Reg}]. The number of Monte Carlo repetitions is 5000 and squared error loss is used.}
\label{fig.SqReg}
\end{figure}

 \begin{table}[htp]
 \caption{Monte Carlo  (50 repetitions) estimators of $n_1^{\rm opt}$ and $\Var(\h{\mu}_j)$ for specific values of the training set sample size $n_1$ for various values of the total sample size and error distributions.}
 \label{table.ZeroOne}
 \centering{\footnotesize
 \begin{tabular}
 {@{\hspace{0ex}}l@{\hspace{2ex}}r@{\hspace{1ex}}c@{\hspace{1ex}}c@{\hspace{3.8ex}}c@{\hspace{3.8ex}}c@{\hspace{3.8ex}}c@{\hspace{3.8ex}}c@{\hspace{0ex}}}
 \addlinespace
 \toprule
  $F_\varepsilon$ & $n$ & $\h{n}_1^{\rm opt}\left(\h{\Var}(\h{n}_1^{\rm opt})\right)$ & \multicolumn{5}{c}{$\Var(\h{\mu}_j)$} \\
 \cline{4-8}
 & & & $\h{n}_1^{\rm opt}$ & $.75n$ & $.80n$ & $.85n$ & $.90n$ \\
 \hline
 \multirow{2}{*}{\rotatebox{45}{\scriptsize$N(0,1)$}}
 &  60 & 30(0) & $3.45\times10^{-3}$ & $9.24\times10^{-3}$ & $12.2\times10^{-3}$ & $17.0\times10^{-3}$ & $26.8\times10^{-3}$ \\
 & 100 & 50(0) & $2.10\times10^{-3}$ & $5.70\times10^{-3}$ & $7.50\times10^{-3}$ & $10.6\times10^{-3}$ & $16.6\times10^{-3}$ \\
 [2ex]
 \multirow{2}{*}{\rotatebox{45}{\scriptsize$U(-1,1)$}}
 &  60 & 30(0) & $6.50\times10^{-5}$ & $15.0\times10^{-5}$ & $20.0\times10^{-5}$ & $27.0\times10^{-5}$ & $42.0\times10^{-5}$ \\
 & 100 & 50(0) & $5.89\times10^{-5}$ & $12.9\times10^{-5}$ & $16.4\times10^{-5}$ & $22.3\times10^{-5}$ & $34.1\times10^{-5}$ \\
 [2ex]
 \multirow{2}{*}{\rotatebox{45}{\scriptsize$t_{12}$}}
 &  60 & 30(0) & $9.60\times10^{-5}$ & $21.3\times10^{-5}$ & $27.2\times10^{-5}$ & $36.9\times10^{-5}$ & $56.5\times10^{-5}$ \\
 & 100 & 50(0) & $6.68\times10^{-5}$ & $14.8\times10^{-5}$ & $18.9\times10^{-5}$ & $25.7\times10^{-5}$ & $39.3\times10^{-5}$ \\
 \bottomrule
 \end{tabular}}
 \end{table}

\section{An application to real data}
\label{sec:real}

In what follows, we briefly describe the two data sets we use to illustrate our results. One is a data set on birth weight and the second is a part of the data set from the {\it National Health and Nutrition Examination Survey} (NHANES III) conducted by the {\it National Center for Health Statistics} (NCHS) between 1988 and 1994 \citep[see][]{HL2000}.

Low birth weight, defined as a birth weight less than 2500 grams, is an outcome that has been of concern to physicians for years, because of its association between high infant mortality and birth defect rates. Data were collected as part of a large study at Baystate Medical Center in Springfield, MA. The outcome variable was a binary variable taking values 0 if the weight of a baby at birth is greater than or equal to 2500 gr and 1 if it is less than 2500 gr, i.e.\ it is actual low birth weight. Measurements on 10, additional to the outcome,  variables were collected on 189 subjects. Out of 189 subjects 59 were observed as low birth weight and 130 as normal birth weight, so the class proportions were 31.22\% low birth weight and 68.78\% normal birth weight. Because the race variable has three categories, white, black and other, it was coded using two design variables defined as follows. The first design variable takes the value 1 when the race is ``black'' and 0 otherwise, while the second design variable takes the value 1 when the race is ``other'' and 0 when it is ``white'' or ``black''. The other variable we use was mother's weight at the last menstrual period, and the logistic regression model we use contains a constant term \citep[see][p.~38]{HL2000}.

The second data set has a binary outcome variable and five covariates. The binary outcome variable $Y$ takes values 0 if the average systolic blood pressure is less than or equal 140 and is 1 if it is greater than 140. Data were collected via physical examinations and clinical and laboratory tests, and only adults defined as 20 years of age and older were included in the survey. We selected a sample of size 1,000 with complete observations on the covariates representing age (in years), sex (male or female), race (white, black, other), body weight (in pounds) and standing height (in inches). Because the race variable has three categories, it was coded using the same design variables, as in the previous example. Applying our proposed Algorithm \ref{algorithm} (for classification via logistic regression), we obtain that as the size of the training sample increases so does the variance of the test set error. We obtain minimum variance when $n_1=\lfloor n/2 \rfloor$.



\section{Discussion and Recommendations}
\label{sec:conclusions}

In this paper we address the problem of ``optimal'' selection of the size of training sets when interest centers in making inferences about the generalization error of prediction algorithms. We study two types of cross validation estimators of the generalization error. These are random cross validation (or repeated learning-testing method) and $k$-fold cross validation estimators. The statistical rule that defines ``optimality'', in light in Proposition \ref{prop.v,c} and Theorem \ref{theo.var_bound}, calls for the minimization of the variance of the test set error.

Our results indicate that the optimal training sample size selection is a complex problem and it depends primarily on the loss function that is used, as well as on the data distribution. Describe the complexity of the analysis, simple general rules for practical use can be drawn. When the loss function belongs to the $q$-class then, for the case of random cross validation, $n_1^{\rm opt}=\lfloor n/2 \rfloor$ (for both, sample mean and regression decision rules). When the loss function does not belong in the $q$-class then the value of $n_1^{\rm opt}$ may or may not equal $\lfloor n/2 \rfloor$, indicating that $q$-class membership is a sufficient condition for $n_1^{\rm opt}=\lfloor n/2 \rfloor$, but not a necessary condition. We present cases where the loss function does not belong in the $q$-class, yet $n_1^{\rm opt}=\lfloor n/2 \rfloor$ (see, for example, Table \ref{table.Abs}). Furthermore, we illustrate the effects of complex interactions between the loss function a researcher chooses and the data distribution on the optimal training sample size selection in Tables \ref{table.ModSq}.

Furthermore, we studied the effect that popular training set sample size selection, such as $.75n$ or $.80n$, has on the $\Var(\h{\mu}_j$ and $\Var(\h{\mu}_{{\rm CV},J})$, $J=10,15$. We found out that as the training sample size increases away from its optimal value the aforementioned variances increase substantially. To decrease those we need to increase the resampling size $J$ from 15 to a value that achieves acceptable reduction ratio and/or resampling effectiveness, thereby increasing the computational cost.

The selection of the resampling size $J$ of a random cross validation estimator of the generalization error is important, as it contributes to the variance reduction of this estimator. We propose two methods of selecting $J$ and exemplify their use. Our analysis indicates that, when the correlation between two different test sets is moderate, i.e.\ in range of $[.2,.3]$, the resampling size $J\ge14$ if we desire resampling effectiveness greater than or equal to $.85$. The higher the desired resampling effectiveness, the higher the value of $J$. Similar results hold when the reduction ratio forces the variance of $\h{\mu}_{{\rm CV},J}$ closer to its asymptotic value, and provides for larger values of $J$. Our work shows that the correlation coefficient $\rho$ is affected by the training sample size and since $J$ is a function of $\rho$ it is also affected by the splitting of the total sample. The effect of training sample size $n_1$ is to decrease the correlation as $n_1$ moves away from its optimal value, requiring larger and larger values of $J$ to achieve small values of $\Var(\h{\mu}_{{\rm CV},J})$.

A potential limitation to the complete generality of the results presented here is the fact that we require the operating data distributions to have finite moments of at least order six (see Subsections \ref{ssec:mean} and \ref{ssec:regression}). Most commonly used distributions satisfy this condition. But if the data do not follow a distribution satisfying the aforementioned condition, a transformation of the data may be amenable to the analysis presented here.

For the $k$-fold CV estimator of the generalization error we provide rules for obtaining $k^{\rm opt}$, and study the factors that affect the value of it. As in the case of random CV, the value of $k^{\rm opt}$ is affected by the loss function and the data distribution. Our results indicate that LOOCV can be replaced by $k$-fold with $5\le k \le 10$, in most cases.


{\appendix
\renewcommand\tablename{\footnotesize Table}
\renewcommand\figurename{\footnotesize Fig.}
\section{An additional analysis for the regression case}
\label{sec.app.A}
\subsection{An illustration of the convergence in \eqref{eq.V}}
\label{sec.sapp.A1}
 Table \ref{table.V} illustrates the convergence of the matrix sequence $n^{-1}X^{\ta}X$ to the matrix  $V^{-1}=\Sig_{\bm{X}}+\bm{\mu}_{\bm{X}}\bm{\mu}_{\bm{X}}^\ta$. We simulate $10^4$ random samples of size $n=30$, $100$ and $250$ from the 3-dimensional normal distribution with mean vector $\mu=\bm{0}\in\RR^3$ and variance-covariance matrix
 $\Sig={\tiny\left(\begin{array}{@{\hspace{0ex}}r@{\hspace{.5ex}}r@{\hspace{.5ex}}r@{\hspace{0ex}}}
       1 & .5 & .7\\
      .5 & 1 & -.1\\
      .7 & -.1 & 1
       \end{array}\right)}$
 and from the trinomial distribution with number of the possible outcomes $k=10$ and probability vector $\bm{p}=(.1,.2,.7)^\ta$. In the table are shown the average of the estimator $n^{-1}X^{\ta}X=(\h{v}^*_{ij})_{3\times3}$ from the samples, and the corresponding average of the maximum norm, $\|\cdot\|_{\max}$, between the estimators and the true value of $V^{-1}=(v^*_{ij})_{3\times3}$, that is $\|n^{-1}X^{\ta}X-V^{-1}\|_{\max}\doteq\max_{i,j}\{|\h{v}^*_{ij}-v^*_{ij}|\}$. Also, for the trinomial distribution case, in which its components do not standardized, is shown in a parenthesis the average of the maximum norm between the estimators and the true value of $V^{-1}$ of the corresponding standardized random vectors.
 \begin{table}[htp]
 \caption{\footnotesize The average of the estimators $n^{-1}X^TX$, the true $V^{-1}$ and the average of the maximum norm between the estimators and the true value of $V^{-1}$, when $\bm{X}\sim N_3(\bm{0},V^{-1})$ or $\bm{X}\sim {\rm trinomial}(10;(.1,.2,.7))$ for sample size of $n=30,100,250$. For the trinomial case, in parenthesis, the average of the maximum norm of the corresponding standardized vectors, is presented.}
 \label{table.V}
 \centering{\footnotesize
 \begin{tabular}
 {@{\hspace{0ex}}l@{\hspace{1.5ex}}c@{\hspace{2ex}}
 c@{\hspace{2ex}}
 c@{\hspace{2ex}}c@{\hspace{0ex}}}
 \addlinespace
 \toprule
 $\bm{F}$ & $\bm{V^{-1}}$ & \multicolumn{3}{c}{$\begin{array}{c}\bm{n^{-1}X^TX}\\ \bm{\|V^{-1}-n^{-1}X^TX\|_{\max}}\end{array}$} \\
 \hline
  & & \multicolumn{3}{c}{$n$} \\
 \cline{3-5}
  & & 30 & 100 & 250 \\
 \cline{2-5}\\
 [-2ex]
 \raisebox{-1.6ex}{\rotatebox{90}{${\textrm{\footnotesize 3-dim.}\atop \textrm{\footnotesize normal}}$}}
  &\raisebox{1.8ex}{$\left(
    \begin{array}{@{\hspace{0ex}}r@{\hspace{1.4ex}}r@{\hspace{1.4ex}}r@{\hspace{0ex}}}
      1 & .5 & .7\\
      .5 & 1 & -.1\\
      .7 & -.1 & 1
    \end{array}
   \right)$}
   &
   ${\left(
    \begin{array}{@{\hspace{0ex}}r@{\hspace{1.4ex}}r@{\hspace{1.4ex}}r@{\hspace{0ex}}}
      0.996 &  0.498 &  0.697\\
      0.498 &  0.998 & -0.101\\
      0.697 & -0.101 &  0.997
    \end{array}
   \right)\atop \ds 0.344}$
   &
   ${\left(
    \begin{array}{@{\hspace{0ex}}r@{\hspace{1.4ex}}r@{\hspace{1.4ex}}r@{\hspace{0ex}}}
      0.998 &  0.498 & 0.700\\
      0.498 &  0.997 &-0.099\\
      0.700 & -0.099 & 0.999
    \end{array}
   \right)\atop \ds 0.191}$
   &
   ${\left(
    \begin{array}{@{\hspace{0ex}}r@{\hspace{1.4ex}}r@{\hspace{1.4ex}}r@{\hspace{0ex}}}
      1.001 &  0.501 & 0.701\\
      0.501 &  1.000 &-0.099\\
      0.701 & -0.099 & 1.001
    \end{array}
   \right)\atop \ds 0.111}$ \\
  [2.5ex]
  \cline{2-5}\\
  [-2ex]
  \raisebox{-2.2ex}{\rotatebox{90}{\footnotesize trinomial}}
  &\raisebox{1.8ex}{$\left(
    \begin{array}{@{\hspace{0ex}}r@{\hspace{1.4ex}}r@{\hspace{1.4ex}}r@{\hspace{0ex}}}
      1.9 &  1.8 &  6.3\\
      1.8 &  5.6 & 12.6\\
      6.3 & 12.6 & 51.1
    \end{array}
   \right)$}
   &
   ${\left(
    \begin{array}{@{\hspace{0ex}}r@{\hspace{1.4ex}}r@{\hspace{1.4ex}}r@{\hspace{0ex}}}
      1.906 &  1.800 &  6.304\\
      1.800 &  5.612 & 12.587\\
      6.304 & 12.587 & 51.101
    \end{array}
   \right)\atop \ds 3.068(0.304)}$
   &
   ${\left(
    \begin{array}{@{\hspace{0ex}}r@{\hspace{1.4ex}}r@{\hspace{1.4ex}}r@{\hspace{0ex}}}
      1.901 &  1.800 &  6.296\\
      1.800 &  5.600 & 12.600\\
      6.296 & 12.600 & 51.108
    \end{array}
   \right)\atop \ds 1.680(0.169)}$
   &
   ${\left(
    \begin{array}{@{\hspace{0ex}}r@{\hspace{1.4ex}}r@{\hspace{1.4ex}}r@{\hspace{0ex}}}
      1.901 &  1.799 &  6.304\\
      1.799 &  5.596 & 12.594\\
      6.304 & 12.594 & 51.110
    \end{array}
   \right)\atop \ds 1.060(0.107)}$ \\
 \bottomrule
 \end{tabular}}
 \end{table}

\subsection{Linear Regression}
Let $S_j$, $S_j'$ be given training sets, $i\in S_j^c$, $i'\in S_{j'}^c$ and squared error loss such that $L_{s}(\h{y}_{S_j,i},y_i)=(\h{y}_{S_j,i}-y_i)^2=(\bm{x}_i^\ta\h{\beta}_{S_j}-y_i)^2$ and $L_{s}(\h{y}_{S_{j'},{i'}},y_{i'})=(\bm{x}_{i'}^\ta\h{\beta}_{S_{j'}}-y_{i'})^2$.

Define the quantities
\[
\begin{array}{l@{\qquad}l}
  C_1\doteq\sum\limits_{i=1}^{n}\bm{x}_i^\ta V\bm{x}_i,
   &
    C_3\doteq2\mathop{\sum\sum}\limits_{1\le i<i'\le n}(\bm{x}_i^\ta V\bm{x}_{i'})^2=\sum\limits_{i=1}^n\sum\limits_{i=1}^n(\bm{x}_i^\ta V\bm{x}_{i'})^2-C_2,\\
  C_2\doteq\sum\limits_{i=1}^{n}(\bm{x}_i^\ta V\bm{x}_i)^2,
   &
    C_4\doteq2\mathop{\sum\sum}\limits_{1\le i<i'\le n}\bm{x}_i^\ta V\bm{x}_{i}\bm{x}_{i'}^\ta V\bm{x}_{i'}=C_1^2-C_2.
\end{array}
\]
Write the $n\times n$ matrix
\[
nH=nX(X^\ta X)^{-1}X^\ta=XVX^\ta=(\bm{x}_1,\ldots,\bm{x}_n)^\ta V(\bm{x}_1,\ldots,\bm{x}_n)=(\bm{x}_i^\ta V \bm{x}_{i'}),
\]
where $H=(h_{ii'})$ is the hat matrix of regression, and observe that $C_1=n\tr(H)$, $C_2=n^2\sum_{i=1}^{n}h_{ii}^2$ and $C_3=n^22{\sum\sum}_{1\le i<i'\le n}h_{ii'}^2$. It is well known that $\tr(H)=\rank(X)=p$, $0\le\sum_{i=1}^{n}h_{ii}^2\le p$ and $\sum_{i=1}^{n}\sum_{i'=1}^{n}h_{ii'}^2=p$ \citep[see, for example,][]{CH1998}. Setting the parameter $\theta\doteq\sum_{i=1}^{n}h_{ii}^2\in[0,p]$, we have that
\begin{equation}
\label{eq.C2}
C_1=np,
\quad
C_2=n^2\theta,
\quad
C_3=n^2(p-\theta),
\quad
C_4=n^2(p^2-\theta).
\end{equation}

\paragraph{Normal Error Distribution}
Assume the errors in the linear regression model are normally distributed, that is $\bm{\varepsilon}\sim N_n(\bm{0},\sigma^2I)$. Let $\h{\bm{\beta}}_{S_j}$, $\h{\bm{\beta}}_{S_{j'}}$ be the ordinary least square (OLS) estimators computed on the training sets $S_j$, $S_{j'}$. It is well known that $\h{\bm{\beta}}_{S_j}=(X^\ta_{S_j}X_{S_j})^{-1}X^\ta_{S_j}\bm{y}_{S_j}$ (and similarly for the index $j'$), with $X_{S_j}=E_{S_j}X$ and $\bm{y}_{S_j}=E_{S_j}\bm{y}$ (and similarly for the index $j'$) are defined above as the product of $E_{S_j}$ and $X$. This allows one to write $\h{\bm{\beta}}_{S_j}=(X^\ta I_{S_j} X)^{-1} X^\ta I_{S_j} \bm{y}$ and $\h{\bm{\beta}}_{S_{j'}}=(X^\ta I_{S_{j'}} X)^{-1} X^\ta I_{S_{j'}} \bm{y}$.

Let the bivariate random vector
\[
\bm{\xi}_{j,j',i,i'}=(\xi_{j,i},\xi_{j',i'})^\ta \doteq (\bm{x}_i^\ta\h{\bm{\beta}}_{S_j}-y_i, \bm{x}_{i'}^\ta\h{\bm{\beta}}_{S_{j'}}-y_{i'})^\ta.
\]
Then, as $n_1$ becomes large
\begin{equation}
\label{eq.xi_j,j',i,i'}
\begin{split}
&\bm{\xi}_{j,j',i,i'}\stackrel{\rm d}{\approx}N_{2}\big(\bm{0},\Sig^{\bm{\xi}}_{j,j',i,i'}\big), \ \ \textrm{where}\\
&\Sig^{\bm{\xi}}_{j,j',i,i'}\doteq\sigma^2
\left(
  \begin{array}{@{\hspace{0ex}}c@{\hspace{2ex}}c@{\hspace{0ex}}}
    \frac{\bm{x}_i^\ta V \bm{x}_i}{n_1} +1 &
         \frac{\card(S_j\cap S_{j'})}{n_1^2}\bm{x}_{i}^{\ta}V\bm{x}_{i'}-\frac{\one{i'\in S_j}+\one{i\in S_{j'}}}{n_1}\bm{x}_{i}^{\ta}V\bm{x}_{i'}+\one{i=i'}
         \\
     * & \frac{\bm{x}_{i'}^\ta V \bm{x}_{i'}}{n_1} +1 \\
  \end{array}
\right);
\end{split}
\end{equation}
and the moments of $\bm{\xi}_{j,j',i,i'}$ are approximated by the associate moments of the approximated distribution.

\section{A useful lemma}
\label{sec.app.B}

We offer a lemma that is fundamental in proving the results presented in this paper.

\begin{lemma}
\label{lem.indices}
Let $S_j$ and $S_{j'}$ be two index sets of size $n_1<n$ (and $n_2=n-n_1)$ as in random CV; and let us consider the fixed indices $i\neq i'\neq i''\in N=\{1,\ldots,n\}$. Then,
\item{\rm(a)} $\Pr(\one{i\in S_j^c}=1)=\frac{n_2}n$.
\item{\rm(b)} $\Pr(\one{i,i'\in S_j^c}=1)=\frac{n_2(n_2-1)}{n(n-1)}$ and $\Pr(\one{i\in S_j^c,i'\in S_j}=1)=\frac{n_1n_2}{n(n-1)}$.
\item{\rm(c)} $\Pr(\one{i,\in S_j^c,i',i''\in S_j}=1)=\frac{n_1n_2(n_1-1)}{n(n-1)(n-2)}$.
\item{\rm(d)} $\E(\one{i\in S_j^c})=\frac{n_2}n$ and $\Var(\one{i\in S_j^c})=\frac{n_1n_2}{n^2}$.
\item{\rm(e)} $\E(\one{i,i'\in S_j^c})=\frac{n_2(n_2-1)}{n(n-1)}$.
\item{\rm(f)} $\E(\one{i\in S_j^c\cap S_{j'}^c})=\E(\one{i\in S_j^c,i'\in S_{j'}^c})=\frac{n_2^2}{n^2}$.
\item{\rm(g)} $\E(\one{i\in S_j^c,i'\in S_{j'}^c,i'\in S_{j}})=\E(\one{i\in S_j^c,i'\in S_{j'}^c,i\in S_{j'}})=\frac{n_1n_2^2}{n^2(n-1)}$.
\item{\rm(h)} $\E(\one{i\in S_j^c,i'\in S_{j'}^c,i'\in S_{j},i\in S_{j'}})=\frac{n_1^2n_2^2}{n^2(n-1)^2}$.
\item{\rm(i)} $\E(\card(S_j\cap S_{j'})\one{i\in S_j^c\cap S_{j'}^c})=\frac{n_1^2n_2^2}{n^2(n-1)}$, $\E(\card^2(S_j\cap S_{j'})\one{i\in S_j^c\cap S_{j'}^c})=\frac{n_1^2n_2^2(n_1(n_1-1)+n_2-1)}{n^2(n-1)(n-2)}$.
\item{\rm(j)} $\E(\card^2(S_j\cap S_{j'})\one{i\in S_j^c,i'\in S_{j'}^c})=\frac{n_1^2n_2^2[(n-2)^2+(n-3)(n_1-1)^2]}{n^2(n-1)^2(n-2)}$.
\item{\rm(k)} $\E(\card(S_j\cap S_{j'})\one{i\in S_j^c,i'\in S_{j'}^c,i\in S_{j'}})=\E(\card(S_j\cap S_{j'})\one{i\in S_j^c,i' S_{j'}^c,i'\in S_{j}})=\frac{n_1^2n_2^2(n_1-1)}{n^2(n-1)^2}$.
\end{lemma}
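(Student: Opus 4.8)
The plan is to reduce every identity to elementary sampling-without-replacement counts for a single uniform $S_j$ (parts (a)--(c)), and then to assemble the remaining quantities using linearity of expectation together with the independence of $S_j$ and $S_{j'}$.

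First I would fix the model: each $S_j$ is uniform on the $\binom{n}{n_1}$ subsets of $N$ of size $n_1$, and $S_j$, $S_{j'}$ are independent, as in random CV. For (a), the event $\{i\in S_j^c\}$ says that a fixed index lies among the $n_2$ test indices; counting subsets gives $\binom{n-1}{n_2-1}\big/\binom{n}{n_2}=n_2/n$, equivalently by exchangeability of the indices under the uniform law. Parts (b) and (c) are the same count for two and three prescribed distinct indices with prescribed membership, e.g.\ $\Pr(i\in S_j^c,\,i',i''\in S_j)=\binom{n-3}{n_1-2}\big/\binom{n}{n_1}$, which simplifies to the stated $\frac{n_1n_2(n_1-1)}{n(n-1)(n-2)}$. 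Parts (d) and (e) are then immediate, since $\E(\one{A})=\Pr(A)$ for any event $A$, and $\Var(\one{i\in S_j^c})=\frac{n_2}{n}\bigl(1-\frac{n_2}{n}\bigr)=\frac{n_1n_2}{n^2}$.

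For the two-sample quantities (f)--(h) I would factor across the independent copies $S_j,S_{j'}$: each indicator splits into a factor depending only on $S_j$ and a factor depending only on $S_{j'}$, so the expectation is a product of two single-sample probabilities taken from (a)--(c). For instance, in (h) the constraints $i\in S_j^c,\,i'\in S_j$ concern $S_j$ while $i\in S_{j'},\,i'\in S_{j'}^c$ concern $S_{j'}$, giving $\Pr(i\in S_j^c,i'\in S_j)\,\Pr(i\in S_{j'},i'\in S_{j'}^c)=\left(\frac{n_1n_2}{n(n-1)}\right)^2$.

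The substantive work is in (i)--(k), where $\card(S_j\cap S_{j'})$ appears. Here I would write $\card(S_j\cap S_{j'})=\sum_{k=1}^n\one{k\in S_j}\one{k\in S_{j'}}$ and, for the squared versions, expand $\card^2=\sum_k\sum_l\one{k,l\in S_j}\one{k,l\in S_{j'}}$ as a double sum, moving the expectation inside these finite sums by linearity. The key simplification is that any summation index coinciding with a constrained test index (e.g.\ $k=i$ when $i\in S_j^c$ is imposed) forces a conflicting pair of indicators and annihilates that term, so only indices disjoint from the prescribed ones survive. Each surviving term then factors across $S_j,S_{j'}$ by independence and is evaluated by (a)--(c); in the squared cases the diagonal ($k=l$) and off-diagonal ($k\neq l$) contributions must be counted separately, with the off-diagonal terms invoking the three-index probability (c). Summing the two pieces and using $n_2=n-n_1$ gives the stated closed forms, the only nontrivial algebra being identities such as $(n-2)+(n_1-1)^2=n_1(n_1-1)+n_2-1$ that recast the result into the displayed shape. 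The main obstacle is precisely this combinatorial bookkeeping---tracking which collisions kill terms and counting the number of surviving indices---rather than any conceptual difficulty; the two equalities in (k) then follow from one another by the symmetry exchanging $(i,S_j)$ with $(i',S_{j'})$.
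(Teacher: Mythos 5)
Your proposal is correct and follows essentially the same route as the paper: elementary subset counts for (a)--(c), factorization across the independent sets $S_j,S_{j'}$ for (f)--(h), and for (i)--(k) the expansion $\card(S_j\cap S_{j'})=\sum_k\one{k\in S_j\cap S_{j'}}$ (with diagonal/off-diagonal separation for the squares), dropping the terms that collide with the constrained indices and evaluating the survivors via the single-sample probabilities. Even the closing algebraic identity $(n-2)+(n_1-1)^2=n_1(n_1-1)+n_2-1$ is exactly the simplification the paper performs.
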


 }

\newpage
\textcolor{white}{An empty page}

\setcounter{section}{0}
\setcounter{page}{1}
\setcounter{table}{0} \renewcommand\thetable{OA.\arabic{table}}
\setcounter{theorem}{0} \renewcommand\thetheorem{OA.\arabic{theorem}}
\setcounter{equation}{0}
\thispagestyle{empty}
\begin{center}
\Large\bf
Supplementary material
\end{center}


\section{Proofs}
\label{sec.online.2}
\begin{pr}{Proof of Lemma \ref{lem.indices}}
(a) $\Pr(\one{i\in S_j^c}=1)=\Pr(i\notin S_j)={n-1\choose n_1}{n\choose n_1}^{-1}=\frac{n_2}n$.
\medskip

\noindent
(b) $\Pr(\one{i\ne i'\in S_j^c}=1)=\Pr(i,i'\notin S_j)={n-2\choose n_1}{n\choose n_1}^{-1}=\frac{n_2(n_2-1)}{n(n-1)}$ and
$\Pr(\one{i\in S_j^c,i'\in S_j}=1)=\Pr(i\notin S_j,i'\in S_j^c)={n-2\choose n_1-1}{n\choose n_1}^{-1}=\frac{n_1n_2}{n(n-1)}$.
\medskip

\noindent
(c) As above, $\Pr(\one{i,\in S_j^c,i',i''\in S_j}=1)=\Pr(i',i''\in S_j,i\notin S_j)={n-3\choose n_1-2}{n\choose n_1}^{-1}=\frac{n_1(n_1-1)n_2}{n(n-1)(n-2)}$.
\medskip

\noindent
(d) It follows immediately from (a).
\medskip

\noindent
(e) $\E(\one{i,i'\in S_j^c})=\Pr(\one{i,i'\in S_j^c}=1)=\frac{n_2(n_2-1)}{n(n-1)}$, see (b).
\medskip

\noindent
(f) Due to independence of $S_j$ and $S_{j'}$ and using (a),
\[
\E(\one{i\in S_j^c\cap S_{j'}^c})=\Pr(\one{i\in S_j^c\cap S_{j'}^c}=1)=\Pr(i\in S_j^c,i\in S_{j'}^c)=\Pr(i\in S_j^c)\Pr(i\in S_{j'}^c)=\frac{n_2^2}{n^2}
\]
and similarly for $i\ne i'$.
\medskip

\noindent
(g) As in (f), using (a) and (b),
\[
\E(\one{i\in S_j^c,i'\in S_{j'}^c,i'\in S_{j}})=\Pr(i'\in S_j,i\notin S_j)\Pr(i'\notin S_{j'})=\frac{n_1n_2^2}{n^2(n-1)}.
\]
(h) Similarly to (g), using (b),
\[
\E(\one{i\in S_j^c,i'\in S_{j'}^c,i'\in S_{j},i\in S_{j'}})=\Pr(i'\in S_j,i\notin S_j)\Pr(i\in S_{j'},i'\notin S_{j'})=\frac{n_1^2n_2^2}{n^2(n-1)^2}.
\]
(i) For a simple notation, in the sequel of the proof we use the exchangeability of the indices. Set $i=n$ and $i'=n-1$. Also, observe that $\card(S_j\cap S_{j'})=\sum_{k=1}^{n}\one{k\in S_j\cap S_{j'}}$. So, using (b),
\[
\begin{split}
 \E(\card(S_j\cap S_{j'})\one{n\in S_j^c\cap S_{j'}^c})
 =\sum\limits_{k=1}^{n}\E(\one{k\in S_j\cap S_{j'},n\in S_j^c\cap S_{j'}^c})
 =\sum\limits_{k=1}^{n-1}\E(\one{k\in S_j\cap S_{j'},n\in S_j^c\cap S_{j'}^c})\\
=
 \sum\limits_{k=1}^{n-1}\Pr(k\in S_j,n\notin S_j)\Pr(k\in S_{j'},n\notin S_{j'})
 =(n-1)\frac{n_1^2n_2^2}{n^2(n-1)^2}=\frac{n_1^2n_2^2}{n^2(n-1)}.
\end{split}
\]
Write $\card^2(S_j\cap S_{j'})$ as $\big(\sum_{k=1}^{n-1}\one{k\in S_j\cap S_{j'}}\big)^2+2\one{n\in S_j\cap S_{j'}}\sum_{k=1}^{n-1}\one{k\in S_j\cap S_{j'}}+\one{n\in S_j\cap S_{j'}}$ and, since $\one{n\in S_j\cap S_{j'}}\one{n\in S_j^c\cap S_{j'}^c}=0$, observe that $\card^2(S_j\cap S_{j'})\one{n\in S_j^c\cap S_{j'}^c}$ is
\[
\begin{split}
\card^2(S_j\cap S_{j'})\one{n\in S_j^c\cap S_{j'}^c}&=
\left(\sum\limits_{k=1}^{n-1}\one{k\in S_j\cap S_{j'}}\right)^2\one{n\in S_j^c\cap S_{j'}^c}\\
&=
\sum\limits_{k=1}^{n-1}\one{k\in S_j\cap S_{j'},n\in S_j^c\cap S_{j'}^c}
 +2\mathop{\sum\sum}\limits_{1\le k<k'\le n-1}\one{k,k'\in S_j\cap S_{j'},n\in S_j^c\cap S_{j'}^c}.
\end{split}
\]
Hence, the desired expected value is
\[
\begin{split}
&\sum\limits_{k=1}^{n-1}
\Pr^2(k\in S_j,n\notin S_j)
 +2\mathop{\sum\sum}\limits_{1\le k<k'\le n-1}\Pr^2(k,k'\in S_j,n\notin S_j)\\
=&~
\frac{n_1^2n_2^2}{n^2(n-1)}+(n-1)(n-2)\left(\frac{n_1(n_1-1)n_2}{n(n-1)(n-2)}\right)^2
=\frac{n_1^2n_2^2}{n^2(n-1)}+\frac{n_1^2n_2^2(n_1-1)^2}{n^2(n-1)(n-2)}.
\end{split}
\]
(j) Write $\card(S_j\cap S_{j'})=\sum_{k=1}^{n-2}\one{k\in S_j\cap S_{j'}}+\one{n-1\in S_j\cap S_{j'}}+\one{n\in S_j\cap S_{j'}}$. As in (i)
\[
\begin{split}
\card^2(S_j\cap S_{j'})\one{n\in S_j^c,n-1\in S_{j'}^c}
=&~
\sum\limits_{k=1}^{n-2}\one{k\in S_j\cap S_{j'},n\in S_j^c,n-1\in S_{j'}^c}\\
 &
 +2\mathop{\sum\sum}\limits_{1\le k<k'\le n-1}\one{k,k'\in S_j\cap S_{j'},n\in S_j^c,n-1\in S_{j'}^c}.
\end{split}
\]
The expected value is
\[
\begin{split}
&
\sum\limits_{k=1}^{n-2}\Pr(k\in S_j,n\notin S_j)\Pr(k\in S_{j'},n-1\notin S_{j'})\\
&
+2\mathop{\sum\sum}\limits_{1\le k<k'\le n-2}\Pr(k,k'\in S_j,n\notin S_j)\Pr(k,k'\in S_{j'},n-1\notin S_{j'})\\
=&~
(n-2)\frac{n_1^2n_2^2}{n^2(n-1)^2}+(n-3)\frac{n_1^2n_2^2(n_1-1)^2}{n^2(n-1)^2(n-2)}.
\end{split}
\]
(k) Similarly to (j), $\card(S_j\cap S_{j'})\one{n\in S_j^c,n-1\in S_{j'}^c,n\in S_{j'}}
=\sum_{k=1}^{n-2}\one{k\in S_j\cap S_{j'},n\in S_j^c,n-1\in S_{j'}^c,n\in S_{j'}}$; and the expected value is
\[
\sum\limits_{k=1}^{n-2}\Pr(k\in S_j,n\notin S_j)\Pr(k,n\in S_{j'},n-1\notin S_{j'})=\frac{n_1^2n_2^2(n_1-1)}{n^2(n-1)^2},
\]
completing the proof.
\end{pr}

\begin{pr}{Proof of Theorem \ref{theo.var(mu_j)}}
Write $\h{\mu}_j=\frac{1}{n_2}\sum_{i=1}^n L(\h{y}_{S_j,i},y_i)\one{i\in S_j^c}$. So, $\h{\mu}_j|S_j=\frac{1}{n_2}\sum_{i=1}^n \one{i\in S_j^c}\linebreak
L(\h{y}_{S_j,i},y_i|S_j,i)$
and $\h{\mu}_j^2|S_j=\frac{1}{n_2^2}\Big\{\sum_{i=1}^n \one{i\in S_j^c}L^2(\h{y}_{S_j,i},y_i)|S_j,i
+2\mathop{\sum\sum}_{1\le i<i'\le n}\one{i,i'\in S_j^c}\linebreak
L(\h{y}_{S_j,i},y_i)L(\h{y}_{S_j,i'},y_{i'})|S_j,i,i'\Big\}$, since $\one{A}\one{B}=\one{A\cap B}$ and because the quantities $\one{i\in S_j^c}$, $\one{i,i'\in S_j^c}$ are constants. Therefore,
\[
\E(\h{\mu}_j|S_j)=\frac{1}{n_2}\sum_{i=1}^n \one{i\in S_j^c}\E[L(\h{y}_{S_j,i},y_i)|S_j,i]=\frac{1}{n_2}\sum\limits_{i=1}^n \one{i\in S_j^c}\sfe_i,
\]
\[
\begin{split}
\E(\h{\mu}_j^2|S_j)&
=\frac{1}{n_2^2}\left\{\sum\limits_{i=1}^n \one{i\in S_j^c}\E(L^2(\h{y}_{S_j,i},y_i)|S_j,i)\right.\\
&
\quad\qquad+2\left.\mathop{\sum\sum}\limits_{1\le i<i'\le n}\one{i,i'\in S_j^c}\E[L(\h{y}_{S_j,i},y_i)L(\h{y}_{S_j,i'},y_{i'})|S_j,i,i']\right\}\\
&
=\frac{1}{n_2^2}\left\{\sum\limits_{i=1}^n \one{i\in S_j^c}\sfe_{i,i}
+2\mathop{\sum\sum}\limits_{1\le i<i'\le n}\one{i,i'\in S_j^c}\sfe_{i,i'}\right\}.
\end{split}
\]
Using Lemma \ref{lem.indices}, we get
$\E(\h{\mu}_j)=\E[\E(\h{\mu}_j|S_j)]=\frac{1}{n}\sum_{i=1}^n\sfe_i$, $\E(\h{\mu}_j^2)=\E[\E(\h{\mu}_j^2|S_j)]=\frac{1}{nn_2}\big\{\sum_{i=1}^n\sfe_{i,i}+\frac{2(n_2-1)}{n-1}\mathop{\sum\sum}_{1\le i<i'\le n}\sfe_{i,i'}\big\}$,
completing the proof.
\end{pr}

\begin{pr}{Proof of \eqref{eq.xi_j,j',i,i'}}
$\bm{\xi}_{j,j',i,i'}$ can be written as $(I_{S_j}X(X^\ta I_{S_j} X)^{-1}\bm{x}_i -\bm{e}_i,I_{S_{j'}}X(X^\ta I_{S_j} X)^{-1}\bm{x}_{i'}-\bm{e}_{i'})^\ta\bm{y}$. Using standard properties of multivariate normal distribution, after some algebra, we get $\bm{\xi}_{j,j',i,i'}\sim N_2(\bm{0},\Sig_{j,j',i,i'})$, where $\Sig_{j,j',i,i'}=(\sigma_{rc})$ has elements:
\begin{align*}
\sigma_{11}=\sigma^2[&\bm{x}_i^{\ta}(X^{\ta}I_{S_j}X)^{-1}\bm{x}_i+1],\\
\sigma_{22}=\sigma^2[&\bm{x}_{i'}^{\ta}(X^{\ta}I_{S_{j'}}X)^{-1}\bm{x}_{i'}+1],\\
\sigma_{12}=\sigma^2[&\bm{x}_i^{\ta}(X^{\ta}I_{S_j}X)^{-1}X^{\ta}I_{S_j{\cap}S_j}X (X^{\ta}I_{S_{j'}}X)^{-1}\bm{x}_{i'}
                      -\bm{x}_i^{\ta}(X^{\ta}I_{S_j}X)^{-1}X^{\ta}\bm{x}_{i'}\one{i'\in S_j}\\
                     &-\bm{x}_i^{\ta}(X^{\ta}I_{S_{j'}}X)^{-1} X^{\ta}\bm{x}_{i'}\one{i\in S_{j'}}+\one{i'=i}].
\end{align*}
As $n_1$ becomes large, in view of \eqref{eq.lim.card}, $(X^\ta I_{S_j} X)^{-1}$, $(X^\ta I_{S_{j'}} X)^{-1}$ approximated by $n_1^{-1}V$. Also, when $\card(S_j\cap S_{j'})\ne0$ we write the matrix  $n_1(X^\ta I_{S_{j'}} X)^{-1}(X^\ta I_{S_j\cap S_{j'}} X)\times(X^\ta I_{S_j} X)^{-1}$ as
\[
\frac{\card(S_j\cap S_{j'})}{n_1}(n_1^{-1}X^\ta I_{S_{j'}} X)^{-1}\frac{1}{\card(S_j\cap S_{j'})}X^\ta I_{S_j\cap S_{j'}} X(n_1^{-1}X^\ta I_{S_j} X)^{-1},
\]
if $\card(S_j\cap S_{j'})$ takes large values as $n_1$ becomes large, $\frac{1}{\card(S_j\cap S_{j'})}X^\ta I_{S_j\cap S_{j'}} X\simeq V^{-1}$; and if $\lim_{n_1\to\infty}\frac{\card(S_j\cap S_{j'})}{n_1}=0$, the matrix $\frac{1}{n_1}X^\ta I_{S_j\cap S_{j'}} X$ tends to $p\times p$ zero matrix [$\h{\bm{\beta}}_{S_{j}}$ and $\h{\bm{\beta}}_{S_{j'}}$ tend to be independent] thus this matrix is approximated by $\frac{\card(S_j\cap S_{j'})}{n_1}V^{-1}$. So, for large values of $n_1$ the variance-covariance matrix $\Sig_{j,j',i,i'}$ is approximated by the matrix $\Sig^{\bm{\xi}}_{j,j',i,i'}$. Finally, the moments of $\bm{\xi}_{j,j',i,i'}$ are continues functions of its variance-covariance matrix, which completes the proof.
\end{pr}

\begin{theorem}[\citealp{Isserlis1918}]
\label{theo.isserlis}
If $(X_1,X_2)^\ta\sim N_2\big(\bm{0},\Sig=(\sigma_{ij})\big)$, then $\E(X_i^4)=3\sigma_{ii}^4$ and $\E(X_1^2X_2^2)=\sigma_{11}\sigma_{22}+2\sigma_{12}^2$.
\end{theorem}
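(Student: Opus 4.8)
The plan is to derive both identities from the moment generating function (mgf) of the bivariate normal, which converts the fourth-order moments into coefficients of a single polynomial and avoids any appeal to the general Isserlis formula we are trying to establish.

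First I would record the mgf of $(X_1,X_2)^\ta\sim N_2(\bm{0},\Sig)$, namely
\[
M(t_1,t_2)=\exp\!\Big(\tfrac12\big(\sigma_{11}t_1^2+2\sigma_{12}t_1t_2+\sigma_{22}t_2^2\big)\Big),
\]
which is finite (indeed entire) for every $(t_1,t_2)\in\RR^2$, so that all moments exist and are recovered by differentiation at the origin: $\E(X_1^aX_2^b)=\partial^{\,a+b}M/\partial t_1^a\partial t_2^b\big|_{(0,0)}$, equivalently $a!\,b!$ times the coefficient of $t_1^at_2^b$ in the Taylor expansion of $M$. The diagonal identity is then immediate: putting $t_2=0$ gives $M(t_1,0)=\exp(\tfrac12\sigma_{11}t_1^2)$, the mgf of the marginal $N(0,\sigma_{11})$ law of $X_1$, whence $\E(X_1^4)=3\sigma_{11}^2$ from the classical univariate fourth-moment formula, and symmetrically $\E(X_2^4)=3\sigma_{22}^2$.

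For the cross moment I would write $M=\exp(Q)$ with $Q=\tfrac12P$ and $P\doteq\sigma_{11}t_1^2+2\sigma_{12}t_1t_2+\sigma_{22}t_2^2$, and note that since every monomial of $P$ has total degree two, the only term of $M=1+Q+\tfrac12Q^2+\cdots$ that can contribute to the degree-four monomial $t_1^2t_2^2$ is $\tfrac12Q^2=\tfrac18P^2$. Expanding, the coefficient of $t_1^2t_2^2$ in $P^2$ is $2\sigma_{11}\sigma_{22}+4\sigma_{12}^2$ (the first part from the two products $\sigma_{11}t_1^2\cdot\sigma_{22}t_2^2$, the second from $(2\sigma_{12}t_1t_2)^2$), so the coefficient of $t_1^2t_2^2$ in $M$ is $\tfrac18(2\sigma_{11}\sigma_{22}+4\sigma_{12}^2)$; multiplying by $2!\,2!=4$ yields $\E(X_1^2X_2^2)=\sigma_{11}\sigma_{22}+2\sigma_{12}^2$, as asserted.

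I do not expect a genuine obstacle: the statement is elementary and the computation is short. The only points needing a little care are the bookkeeping that isolates the unique Taylor term feeding each moment (checking that no lower or higher power of $Q$ can manufacture a $t_1^2t_2^2$ or $t_1^4$ contribution) and the observation that $M$ is entire, so termwise differentiation at the origin is legitimate. As an alternative that dispenses with the mgf in the non-degenerate case $\sigma_{11}>0$, one can condition on $X_1$, use $X_2\mid X_1\sim N\big(\tfrac{\sigma_{12}}{\sigma_{11}}X_1,\ \sigma_{22}-\tfrac{\sigma_{12}^2}{\sigma_{11}}\big)$, and evaluate $\E(X_1^2X_2^2)=\E\big[X_1^2\,\E(X_2^2\mid X_1)\big]$ with $\E(X_1^2)=\sigma_{11}$ and $\E(X_1^4)=3\sigma_{11}^2$, which reproduces $\sigma_{11}\sigma_{22}+2\sigma_{12}^2$ in a single line.
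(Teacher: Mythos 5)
Your proof is correct, and in fact the paper offers no proof of this statement at all: it is quoted as a classical result with a citation to Isserlis (1918), so your moment-generating-function computation supplies a self-contained derivation that the paper simply omits. The mgf route is sound — $M(t_1,t_2)=\exp\bigl(\tfrac12(\sigma_{11}t_1^2+2\sigma_{12}t_1t_2+\sigma_{22}t_2^2)\bigr)$ is entire, so extracting the coefficients of $t_1^4$ and $t_1^2t_2^2$ is legitimate, and your bookkeeping (only the $\tfrac12 Q^2$ term of $\exp(Q)$ can produce a degree-four monomial, since every monomial of $Q$ has degree exactly two) is exactly the check that needs to be made; the one-line conditioning alternative is equally valid when $\sigma_{11}>0$. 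One point worth flagging: your computation yields $\E(X_1^4)=3\sigma_{11}^2$, whereas the statement as printed reads $\E(X_i^4)=3\sigma_{ii}^4$. Since $\Sig=(\sigma_{ij})$ makes $\sigma_{ii}$ a variance, your exponent is the correct one and the printed exponent $4$ is a typo; the paper's own subsequent use of the result confirms this, e.g.\ in the proof of Proposition \ref{prop.E,V,C-randomCV} it computes $\E(\xi_{j,i}^4)=3\sigma^4\bigl(1+\bm{x}_i^\ta V\bm{x}_i/n_1\bigr)^2$, which is $3\bigl[\Var(\xi_{j,i})\bigr]^2$ and not the fourth power. So your derivation not only proves the intended statement but also corrects its typography.
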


\begin{pr}{Proof of Proposition \ref{prop.E,V,C-randomCV}}
Let $\sfe_i=\E[L_{s}(\h{y}_{S_j,i},y_i|S_j,i)]$, $\sfe_{i,i'}=\E[L_{s}(\h{y}_{S_j,i},y_i)L_{s}\big(\h{y}_{S_j,i'},y_{i'})|\\S_j,i,i']$.
From \eqref{eq.xi_j,j',i,i'} and Theorem \ref{theo.isserlis} we obtain $\sfe_i=\E(\xi_{j,i}^2)=\sigma^2\left(1+\frac{\bm{x}_i^\ta V \bm{x}_i}{n_1}\right)$, $\sfe_{i,i}=\E(\xi_{j,i}^4)=3\sigma^4\left(1+\frac{\bm{x}_i^\ta V \bm{x}_i}{n_1}\right)^2$. Now, in \eqref{eq.xi_j,j',i,i'} assume that $j={j'}$ and $i\ne i'\in S_j^c$. Observe that $\card(S_j\cap S_{j'})=n_1$ and the indicators are zero. Thus, using Theorem \ref{theo.isserlis},
$\sfe_{i,i'}=\E(\xi_{j,i}^2\xi_{j,i'}^2)=\sigma^4\left[\left(1+\frac{\bm{x}_i^\ta V\bm{x}_i}{n_1}\right)\left(1+\frac{\bm{x}_{i'}^\ta V\bm{x}_{i'}}{n_1}\right)+2\frac{(\bm{x}_{i}^\ta V\bm{x}_{i'})^2}{n_1^2}\right]$.
Applying Theorem \ref{theo.var(mu_j)}, $\E(\h{\mu}_j)=\sigma^2\left(1+\frac{p}{n_1}\right)$. For the variance of $\h{\mu}_j$ we write $n\sfe_{i,i}-n_2\sfe_{i}^2=\sigma^4(3n-n_2)\Big(1+\frac{\bm{x}_i^\ta V \bm{x}_i}{n_1}\Big)^2=\sigma^4(2n+n_1)\Big(1+\frac{2\bm{x}_i^\ta V \bm{x}_i}{n_1}+\frac{(\bm{x}_i^\ta V \bm{x}_i)^2}{n_1^2}\Big)$; and so,
\[
\sum_{i=1}^n\big(n\sfe_{i,i}-n_2\sfe_{i}^2\big)
=\sigma^4\left(n(2n+n_1)+\frac{2(2n+n_1)}{n_1}C_1+\frac{2n+n_1}{n_1^2}C_2\right).
\]
Also, the quantity $\sigma^{-4}[n(n_2-1)\sfe_{i,i'}-(n-1)n_2\sfe_{i}\sfe_{i'}]$ is
\[
\begin{split}
 n(n_2-1)&
 \textstyle
 \left[\left(1+\frac{\bm{x}_i^\ta V\bm{x}_i}{n_1}\right)\left(1+\frac{\bm{x}_{i'}^\ta V\bm{x}_{i'}}{n_1}\right)
  +\frac{2}{n_1^{2}}(\bm{x}_{i}^\ta V\bm{x}_{i'})^2\right]
  -(n-1)n_2\left(1+\frac{\bm{x}_i^\ta V\bm{x}_i}{n_1}\right)\left(1+\frac{\bm{x}_{i'}^\ta V\bm{x}_{i'}}{n_1}\right)\\
=&
-n_1\left(1+\frac{\bm{x}_i^\ta V\bm{x}_i}{n_1}\right)\left(1+\frac{\bm{x}_{i'}^\ta V\bm{x}_{i'}}{n_1}\right)
  +\frac{2n(n_2-1)}{n_1^{2}}(\bm{x}_{i}^\ta V\bm{x}_{i'})^2\\
=&
-n_1 -(\bm{x}_i^\ta V\bm{x}_i+\bm{x}_{i'}^\ta V\bm{x}_{i'})-\frac{1}{n_1}\bm{x}_i^\ta V\bm{x}_i\bm{x}_{i'}^\ta V\bm{x}_{i'}
  +\frac{2n(n_2-1)}{n_1^{2}}(\bm{x}_{i}^\ta V\bm{x}_{i'})^2.
\end{split}
\]
Observing that
$\sum\sum_{1\le i<i'\le n}\bm{x}_{i'}^\ta V\bm{x}_{i'}
=\sum\sum_{1\le i<i'\le n}\bm{x}_{i}^\ta V\bm{x}_{i}
=\frac{1}{2}\sum_{i=1}^n\sum_{i'=1,i'\ne i}^n\bm{x}_{i}^\ta V\bm{x}_{i}
=\frac{n-1}{2}C_1$,
we get
\[
\begin{split}
&~\mathop{\sum\sum}\limits_{1\le i<i'\le n}[n(n_2-1)\sfe_{i,i'}-(n-1)n_2\sfe_{i}\sfe_{i'}]\\
=&~\sigma^4\left\{-\frac{n(n-1)n_1}{2}-(n-1)C_1-\frac{1}{2n_1}C_4+\frac{n(n_2-1)}{n_1^{2}}C_3\right\}.
\end{split}
\]
Thus, the quantity $\sum_{i=1}^n\big(n\sfe_{i,i}-n_2\sfe_{i}^2\big)+\frac{2}{n-1}\sum\sum_{1\le i<i'\le n}[n(n_2-1)\sfe_{i,i'}-(n-1)n_2\sfe_{i}\sfe_{i'}]$ is $2n^2+\frac{4n}{n_1}C_1+\frac{2n+n_1}{n_1^2}C_2+\frac{2n(n_2-1)}{(n-1)n_1^2}C_3-\frac{1}{(n-1)n_1}C_4$; and using \eqref{eq.C2},
\begin{equation}
\label{eq.var_sqr}
\tag{$*$}
\Var(\h{\mu}_j)=\sigma^4\left\{\frac{2}{n_2}+\frac{4p}{n_1n_2}+\frac{(3n+1)\theta}{(n-1)n_1n_2}+\frac{(2n(n_2-1)-n_1p)p}{(n-1)n_1^2n_2}\right\}.
\end{equation}

Now observe that $\h{\mu}_j|S_j=\frac{1}{n_2}\sum_{i=1}^n \xi_{j,i}^2\one{i\in S_j^c}$ and $\h{\mu}_{j'}|S_{j'}=\frac{1}{n_2}\sum_{i'=1}^n \xi_{j',i'}^2\one{i\in S_{j'}^c}$. Thus,
\[
\h{\mu}_{j}\h{\mu}_{j'}|S_{j},S_{j'}=\frac{1}{n_2^2}\left\{\sum\limits_{i=1}^n\xi_{j,i}^2\xi_{j',i}^2\one{i\in S_j^c\cap S_{j'}^c}
+2\mathop{\sum\sum}\limits_{1\le i<i'\le n}\xi_{j,i}^2\xi_{j',i'}^2\one{i\in S_j^c,i'\in S_{j'}^c}\right\},
\]
noting that, since $S_{j}$ and $S_{j'}$ are given, the indicator functions are constants. Therefore,
\[
\E(\h{\mu}_{j}\h{\mu}_{j'}|S_{j},S_{j'})=\frac{1}{n_2^2}\left\{\sum\limits_{i=1}^n\one{i\in S_j^c\cap S_{j'}^c}\E(\xi_{j,i}^2\xi_{j',i}^2)
+2\mathop{\sum\sum}\limits_{1\le i<i'\le n}\one{i\in S_j^c,i'\in S_{j'}^c}\E(\xi_{j,i}^2\xi_{j',i'}^2)\right\}.
\]
In view of \eqref{eq.xi_j,j',i,i'} and using Theorem \ref{theo.isserlis}, we obtain the following. If $i=i'$, since $\one{i'\in S_j}\one{i\in S_j^c\cap S_{j'}^c}=\one{i\in S_{j'}\cap S_j^c\cap S_{j'}^c}=0$ and $\one{i=i'}=1$,
\[
\frac{\E(\xi_{j,i}^2\xi_{j',i}^2)}{\sigma^{4}}=\left(1+\frac{\bm{x}_i^{\ta}V\bm{x}_i}{n_1}\right)^2+2\left(\frac{\card(S_j\cap S_{j'})}{n_1^2}\bm{x}_i^{\ta}V\bm{x}_i+1\right)^2.
\]
and if $i\ne i'$, $\one{i=i'}=0$, and
\[
\frac{\E(\xi_{j,i}^2\xi_{j',i'}^2)}{\sigma^{4}}
=\left(1+\frac{\bm{x}_i^{\ta}V\bm{x}_i}{n_1}\right)\left(1+\frac{\bm{x}_{i'}^{\ta}V\bm{x}_{i'}}{n_1}\right)
+2(\bm{x}_i^{\ta}V\bm{x}_{i'})^2\left(\frac{\card(S_j\cap S_{j'})}{n_1^2}-\frac{\one{i'\in S_j}+\one{i\in S_{j'}}}{n_1}\right)^2.
\]
Write $\E(\xi_{j,i}^2\xi_{j',i}^2)$ as $\sigma^4\sum_{r=0}^{2}A_{r;i}\card^{r}(S_j\cap S_{j'})$, where $A_{0;i}=3+\frac{2}{n_1}\bm{x}_i^\ta V \bm{x}_i+\frac{1}{n_1^2}(\bm{x}_i^\ta V \bm{x}_i)^2$, $A_{1;i}=\frac{4}{n_1^2}\bm{x}_i^\ta V \bm{x}_i$ and $A_{2;i}=\frac{2}{n_1^4}(\bm{x}_i^\ta V \bm{x}_i)^2$. Using Lemma \ref{lem.indices},
\[
\begin{split}
\E[\one{i\in S_j^c\cap S_{j'}^c}\E(\xi_{j,i}^2\xi_{j',i}^2)]
&=
\sigma^4\sum_{r=0}^{2}A_{r;i}\E[\card^{r}(S_j\cap S_{j'})\one{i\in S_j^c\cap S_{j'}^c}]\\
&=
\sigma^4\left\{\frac{n_2^2}{n^2}A_{0;i}+\frac{n_1^2n_2^2}{n^2(n-1)}A_{1;i}+\frac{n_1^2n_2^2(n_1(n_1-1)+n_2-1)}{n^2(n-1)(n-2)}A_{2;i}\right\}.
\end{split}
\]
Since $\sum_{i=1}^{n}A_{0;i}=3n+\frac{2}{n_1}C_1+\frac{1}{n_1^2}C_2$, $\sum_{i=1}^{n}A_{1;i}=\frac{4}{n_1^2}C_1$ and $\sum_{i=1}^{n}A_{2;i}=\frac{2}{n_1^4}C_2$, after some algebra,
\[
\begin{split}
&~\sum\limits_{i=1}^{n}\E[\one{i\in S_j^c\cap S_{j'}^c}\E(\xi_{j,i}^2\xi_{j',i}^2)]\\
=&~\sigma^4\left\{\frac{3n_2^2}{n}+\frac{2n_2^2(n+2n_1-1)}{n^2(n-1)n_1}C_1+\frac{n_2^2[n(n-1)+2n_1(n_1-2)]}{n^2(n-1)(n-2)n_1^2}C_2\right\}.
\end{split}
\]
Now write $\E(\xi_{j,i}^2\xi_{j',{i'}}^2)=\sigma^4\sum_{r=0}^{1}B_{r;i,i'}\Big(\frac{\card(S_j\cap S_{j'})}{n_1^2}-\frac{\one{i'\in S_j}+\one{i\in S_{j'}}}{n_1}\Big)^{2r}$, where $B_{0;i,i'}=1+\frac{\bm{x}_i^\ta V \bm{x}_i+\bm{x}_{i'}^\ta V \bm{x}_{i'}}{n_1}+\frac{\bm{x}_i^\ta V \bm{x}_i\bm{x}_{i'}^\ta V \bm{x}_{i'}}{n_1^2}$ and $B_{1;i,i'}=2(\bm{x}_{i}^\ta V \bm{x}_{i'})^2$. Denote $\varGamma_{S_j,S_{j'};i,i'}=\Big(\frac{\card(S_j\cap S_{j'})}{n_1^2}-\frac{\one{i'\in S_j}+\one{i\in S_{j'}}}{n_1}\Big)^{2}\one{i\in S_j^c,i'\in S_{j'}^c}$ and observe that
\[
\begin{split}
 \varGamma_{S_j,S_{j'};i,i'}=
 &
 ~\frac{\card^2(S_j\cap S_{j'})\one{i\in S_j^c,i'\in S_{j'}^c}}{n_1^4}\\
 &
 -\frac{2\card(S_j\cap S_{j'})}{n_1^3}[\one{i\in S_j^c,i'\in S_{j'}^c,i'\in S_{j}}+\one{i\in S_j^c,i'\in S_{j'}^c,i\in S_{j'}}]\\
 &
 +\frac{2}{n_1^2}\one{i\in S_j^c,i'\in S_{j'}^c,i'\in S_{j},i\in S_{j'}}
  +\frac{1}{n_1^2}[\one{i\in S_j^c,i'\in S_{j'}^c,i'\in S_{j}}+\one{i\in S_j^c,i'\in S_{j'}^c,i\in S_{j'}}].
\end{split}
\]
From Lemma \ref{lem.indices}, after some algebra, we get
\[
\E(\varGamma_{S_j,S_{j'};i,i'})=\frac{n_2^2[(n-2)(n+n_1^2+2n_1n_2-1)-(n_1-1)^2]}{n^2(n-1)^2(n-2)n_1^2}.
\]
Hence,
\[
\begin{split}
\E[\one{i\in S_j^c,i'\in S_{j'}^c}&\E(\xi_{j,i}^2\xi_{j',i'}^2)]
 =\sigma^4\left\{B_{0;i,i'}\E(\one{i\in S_j^c,i'\in S_{j'}^c})+B_{1;i,i'}\E(\varGamma_{S_j,S_{j'};i,i'})\right\}\\
&=
\sigma^4\left\{\frac{n_2^2}{n^2} B_{0;i,i'}
                 +\frac{n_2^2[(n-2)(n+n_1^2+2n_1n_2-1)-(n_1-1)^2]}{n^2(n-1)^2(n-2)n_1^2}B_{1;i,i'}\right\}.
\end{split}
\]
Since $\sum\sum_{1\le i<i'\le n}B_{0;i,i'}=\frac{n(n-1)}{2}+\frac{n-1}{n_1}C_1+\frac{1}{2n_1^2}C_4$ and $\sum\sum_{1\le i<i'\le n}B_{0;i,i'}=\frac{1}{2n_1^2}C_3$,
\[
\begin{split}
2\mathop{\sum\sum}\limits_{1\le i<i'\le n}\E[\one{i\in S_j^c,i'\in S_{j'}^c}\E(\xi_{j,i}^2\xi_{j',i'}^2)]
=
\sigma^4
\left\{
\frac{(n-1)n_2^2}{n}
+\frac{2(n-1)n_2^2}{n^2n_1}C_1\right.\qquad\qquad\\
\left.+\frac{n_2^2[(n-2)(n+n_1^2+2n_1n_2-1)-(n_1-1)^2]}{n^2(n-1)^2(n-2)n_1^4}C_3
+\frac{n_2^2}{n^2n_1^2}C_4
\right\}.
\end{split}
\]
Thus, by $\E(\h{\mu}_{j}\h{\mu}_{j'})=\E[\E(\h{\mu}_{j}\h{\mu}_{j'}|S_j,S_{j'})]$,
\[
\begin{split}
\E(\h{\mu}_{j}\h{\mu}_{j'})&
=\frac{1}{n_2^2}\left\{\sum\limits_{i=1}^n\E[\one{i\in S_j^c\cap S_{j'}^c}\E(\xi_{j,i}^2\xi_{j',i}^2)]
+2\mathop{\sum\sum}\limits_{1\le i<i'\le n}\E[\one{i\in S_j^c,i'\in S_{j'}^c}\E(\xi_{j,i}^2\xi_{j',i'}^2)]\right\}\\
&=\sigma^4
\left\{
\frac{n+2}{n}
+\frac{2(n^2+n_1-n_2)}{n^2(n-1)n_1}C_1
+\frac{n(n-1)+2n_1(n_1-2)}{n^2(n-1)(n-2)n_1^2}C_2
\right.\\
&\quad\qquad\left.
+\frac{(n-2)(n+n_1^2+2n_1n_2-1)-(n_1-1)^2}{n^2(n-1)^2(n-2)n_1^4}C_3
+\frac{1}{n^2n_1^2}C_4
\right\}.
\end{split}
\]
Since $\E(\h{\mu}_{j})\E(\h{\mu}_{j'})=\E^2(\h{\mu}_{j})=\sigma^4\Big(1+\frac{2C_1}{nn_1}+\frac{C_1^2}{n^2n_1^2}\Big) =\sigma^4\Big(1+\frac{2C_1}{nn_1}+\frac{C_2}{n^2n_1^2}+\frac{C_4}{n^2n_1^2}\Big)$, the relation $\Cov(\h{\mu}_{j},\h{\mu}_{j'})=\E(\h{\mu}_{j}\h{\mu}_{j'})-\E(\h{\mu}_{j})\E(\h{\mu}_{j'})$ gives
\[
\begin{split}
\Cov(\h{\mu}_{j},\h{\mu}_{j'})
&=\sigma^4
\left\{
\frac{2}{n}
+\frac{n+2n_1}{n^2(n-1)n_1}C_1
+\frac{2(n+n_1(n_1-2)-1)}{n^2(n-1)(n-2)n_1^2}C_2
\right.\\
&\quad\qquad\left.
+\frac{(n-2)(n+n_1^2+2n_1n_2-1)-(n_1-1)^2}{n^2(n-1)^2(n-2)n_1^4}C_3
\right\}.
\end{split}
\]
Finally, \eqref{eq.C2} completes the proof.
\end{pr}

\begin{pr}{Proof of Proposition \ref{prop.E,V,C-kfoldCV}}
The behavior of $\h{\mu}_j$ is the same as in random CV case. Thus, $\E(\h{\mu}_j)$ follows by Proposition \ref{prop.E,V,C-randomCV}, replacing $n_1$ by $\frac{(k-1)n}{k}$; also, \eqref{eq.var_sqr} (see above) gives
\[
\Var(\h{\mu}_j)
=\sigma^4\left\{\frac{2k}{n}+\frac{4k^2p}{(k-1)n^2}+\frac{3k^2\theta}{(k-1)n^2}+\frac{pk^3}{(k-1)^2n^2}\right\}+o(\sfrac{1}{n^2}).
\]

To compute $\Cov(\h{\mu}_j,\h{\mu}_{j'})$ we have that $\one{i\in S_j^c\cap S_{j'}^c}=0$ ($S_j^c\cap S_{j'}^c=\varnothing$) and $\card(S_j\cap S_{j'})=\frac{(k-2)n}{k}$, thus $\E(\h{\mu}_{j}\h{\mu}_{j'}|S_{j},S_{j'})=\frac{2k^2}{n^2}\mathop{\sum\sum}_{1\le i<i'\le n}\one{i\in S_j^c,i'\in S_{j'}^c}\E(\xi_{j,i}^2\xi_{j',i'}^2)$, where
\[
\frac{\E(\xi_{j,i}^2\xi_{j',i'}^2)}{\sigma^4}=\left(1+\frac{k\bm{x}_i^{\ta}V\bm{x}_i}{(k-1)n}\right)\left(1+\frac{k\bm{x}_{i'}^{\ta}V\bm{x}_{i'}}{(k-1)n}\right)
+\frac{2k^2(\bm{x}_i^{\ta}V\bm{x}_{i'})^2}{(k-1)^2n^2}\left(\frac{k-2}{k-1}-\one{i'\in S_j}-\one{i\in S_{j'}}\right)^2.
\]
Now observe that
\[
\begin{split}
&~
\one{i\in S_j^c,i'\in S_{j'}^c}\left(\frac{k-2}{k-1}-\one{i'\in S_j}-\one{i\in S_{j'}}\right)^2\\
=&~\left(\one{i\in S_j^c,i'\in S_{j'}^c}\frac{k-2}{k-1}-\one{i\in S_j^c,i'\in S_{j'}^c,i'\in S_j}-\one{i\in S_j^c,i'\in S_{j'}^c,i\in S_{j'}}\right)^2\\
=&~\left(\one{i\in S_j^c,i'\in S_{j'}^c}\frac{k-2}{k-1}-2\one{i\in S_j^c,i'\in S_{j'}^c}\right)^2=\one{i\in S_j^c,i'\in S_{j'}^c}\left(\frac{k-2}{k-1}-2\right)^2
=\one{i\in S_j^c,i'\in S_{j'}^c}\frac{k^2}{(k-1)^2};
\end{split}
\]
and $\E(\one{i\in S_j^c,i'\in S_{j'}^c})
=\Pr(i'\in S_{j'}^c|i\in S_j^c)\Pr(i\in S_j^c)
=\frac{{n-2\choose \frac{n}{k}-1}}{{n-1\choose \frac{n}{k}}}\frac{{n-1\choose \frac{n}{k}-1}}{{n\choose \frac{n}{k}}}
=\frac{n}{k^2(n-1)}$. Thus, $\E[\one{i\in S_j^c,i'\in S_{j'}^c}\E(\xi_{j,i}^2\xi_{j',i'}^2)]=\frac{n\sigma^4}{k^2(n-1)} \bigg[\left(1+\frac{k\bm{x}_i^{\ta}V\bm{x}_i}{(k-1)n}\right)\left(1+\frac{k\bm{x}_{i'}^{\ta}V\bm{x}_{i'}}{(k-1)n}\right)
+\frac{2k^4(\bm{x}_i^{\ta}V\bm{x}_{i'})^2}{(k-1)^4n^2}\bigg]$; and so, $\E(\h{\mu}_{j}\h{\mu}_{j'})=\frac{2\sigma^4}{n(n-1)}\mathop{\sum\sum}\limits_{1\le i<i'\le n} \left[\left(1+\frac{k\bm{x}_i^{\ta}V\bm{x}_i}{(k-1)n}\right)\left(1+\frac{k\bm{x}_{i'}^{\ta}V\bm{x}_{i'}}{(k-1)n}\right)
+\frac{2k^4(\bm{x}_i^{\ta}V\bm{x}_{i'})^2}{(k-1)^4n^2}\right]$. As in proof of Proposition \ref{prop.E,V,C-randomCV},
\[
\E(\h{\mu}_{j}\h{\mu}_{j'})
=\sigma^4
\left\{
1
+\frac{2kp}{(k-1)n}
+\frac{2k^4(p-\theta)}{(k-1)^4n(n-1)}
+\frac{k^2(p^2-\theta)}{(k-1)^2n(n-1)}
\right\}.
\]
Finally, the relation $\Cov(\h{\mu}_{j},\h{\mu}_{j'})=\E(\h{\mu}_{j}\h{\mu}_{j'})-\E(\h{\mu}_{j})\E(\h{\mu}_{j'})$ completes the proof.
\end{pr}

\begin{pr}{Proof of Proposition \ref{prop.E,V,C-randomCV-nonnormality}}
Set $X=W_i\big/\sqrt{n_1}$, $Y=W_{i'}\big/\sqrt{n_1}$, $Z=\varepsilon_i$ and $W=\varepsilon_{i'}$. From Lemma \ref{lem.W}, using Theorem \ref{theo.isserlis},
\[
\sfe_i=\E(X-Z)^2=\Var(X-Z)=\Var(X)+\Var(Z)=\sigma^2\left(1+n_1^{-1}\bm{x}_i^\ta V\bm{x}_i\right),
\]
\[
\sfe_{i,i}=\E(X-Z)^4=\sum\limits_{k=0}^{4}{4\choose k}(-1)^k\E(X^{4-k})\E(Z^k)
=\mu_4+6\sigma^4\frac{\bm{x}_i^\ta V\bm{x}_i}{n_1}+3\sigma^4\frac{(\bm{x}_i^\ta V\bm{x}_i)^2}{n_1^2}.
\]
Due the independence of $\CZ_{S_j}$, $y_i$ and $y_{i'}$, $\sfe_{i,i'}=\E[(X-Z)^2(Y-W)^2]=\E(X^2Y^2)-2\E(X^2Y)\E(W)+\E(X^2)\E(W^2)-2\E(XY^2)\E(Z)+4\E(XY)\E(Z)\E(W)-2\E(X)\E(Z)\times\E(W^2) +\E(Y^2)\E(Z^2)-2\E(Y)\E(Z^2)\E(W)+\E(Z^2)\E(W^2)$. By the fact that the asymptotic expected values are $\E(X)=\E(Y)=\E(Z)=\E(W)=0$, $\E(Z^2)=\E(W^2)=\sigma^2$, $\E(X^2)=\sigma^2n_1^{-1}\bm{x}_i^\ta V\bm{x}_i$, $\E(Y^2)=\sigma^2n_1^{-1}\bm{x}_{i'}^\ta V\bm{x}_{i}$ and $\E(X^2Y^2)=\sigma^4n_1^{-2}\bm{x}_{i}^\ta V\bm{x}_{i}\bm{x}_{i'}^\ta V\bm{x}_{i'}+2\sigma^4n_1^{-2}(\bm{x}_{i}^\ta V\bm{x}_{i'})^2$, we get
\[
\sfe_{i,i'}=\sigma^4\left[\left(1+n_1^{-1}\bm{x}_i^\ta V\bm{x}_i\right)\left(1+n_1^{-1}\bm{x}_{i'}^\ta V\bm{x}_{i'}\right)
+2n_1^{-2}(\bm{x}_{i}^\ta V\bm{x}_{i'})^2\right].
\]
Applying Theorem {\rm\ref{theo.var(mu_j)}} and using \eqref{eq.C2}, the form of the variance of $\h{\mu}_j$ follows.

Note that $\mu_4-\sigma^4=\E(\varepsilon^4)-\E^2(\varepsilon^2)=\Var(\varepsilon^2)>0$.
\end{pr}

\begin{pr}{Proof of \eqref{eq.e_i,e_ii,e_ii',0/1}}
Using the asymptotic distribution of $\bm{\varPsi}_{i,i'}$, the expected values are:
\[
\begin{split}
&\E[L_{0/1}(\h{y}_{S_j,i},y_i)|S_j,i]\\
=&\Pr(L_{0/1}(\h{y}_{S_j,i},y_i)=1|S_j,i)
 =\Pr(\h{y}_{S_j,i}=0,y_i=1|S_j,i)+\Pr(\h{y}_{S_j,i}=1,y_i=0|S_j,i)\\
=&\Pr(\varPsi_i<-\sqrt{n_1}\zeta_i)\Pr(y_i=1)
  +\Pr(\varPsi_i\ge-\sqrt{n_1}\zeta_i)\Pr(y_i=0)\\
=&\varPhi(-\sqrt{n_1}\zeta_i)p_i+\varPhi(\sqrt{n_1}\zeta_i)(1-p_i),
\end{split}
\]
\[
\begin{split}
&\E[L_{0/1}(\h{y}_{S_j,i},y_i)L_{0/1}(\h{y}_{S_j,{i'}},y_{i'})|S_j,i,i']\\
=&\Pr(L_{0/1}(\h{y}_{S_j,i},y_i)L_{0/1}(\h{y}_{S_j,{i'}},y_{i'})=1|S_j,i,i')\\
=&\Pr(\h{y}_{S_j,i}=0,y_i=1,\h{y}_{S_j,i'}=0,y_{i'}=1|S_j,i,i') \\
 &+\Pr(\h{y}_{S_j,i}=0,y_i=1,\h{y}_{S_j,i'}=1,y_{i'}=0|S_j,i,i')\\
 &+\Pr(\h{y}_{S_j,i}=1,y_i=0,\h{y}_{S_j,i'}=0,y_{i'}=1|S_j,i,i')\\
 &+\Pr(\h{y}_{S_j,i}=1,y_i=0,\h{y}_{S_j,i'}=1,y_{i'}=0|S_j,i,i')\\
=&\Pr(\varPsi_i<-\sqrt{n_1}\zeta_i,\varPsi_{i'}<-\sqrt{n_1}\zeta_{i'})\Pr(y_i=1)\Pr(y_{i'}=1)\\
 &+\Pr(\varPsi_i<-\sqrt{n_1}\zeta_i,\varPsi_{i'}\ge-\sqrt{n_1}\zeta_{i'})\Pr(y_i=1)\Pr(y_{i'}=0)\\
 &+\Pr(\varPsi_i\ge-\sqrt{n_1}\zeta_i,\varPsi_{i'}<-\sqrt{n_1}\zeta_{i'})\Pr(y_i=0)\Pr(y_{i'}=1)\\
 &+\Pr(\varPsi_i\ge-\sqrt{n_1}\zeta_i,\varPsi_{i'}\ge-\sqrt{n_1}\zeta_{i'})\Pr(y_i=0)\Pr(y_{i'}=0)\\
=&\Pr(\varPsi_i<-\sqrt{n_1}\zeta_i,\varPsi_{i'}<-\sqrt{n_1}\zeta_{i'})\Pr(y_i=1)\Pr(y_{i'}=1)\\
 &+\Pr(\varPsi_i<-\sqrt{n_1}\zeta_i,-\varPsi_{i'}\le\sqrt{n_1}\zeta_{i'})\Pr(y_i=1)\Pr(y_{i'}=0)\\
 &+\Pr(-\varPsi_i\le\sqrt{n_1}\zeta_i,\varPsi_{i'}<-\sqrt{n_1}\zeta_{i'})\Pr(y_i=0)\Pr(y_{i'}=1)\\
 &+\Pr(-\varPsi_i\le\sqrt{n_1}\zeta_i,-\varPsi_{i'}\le\sqrt{n_1}\zeta_{i'})\Pr(y_i=0)\Pr(y_{i'}=0)\\
=&~F_{\varPsi_i,\varPsi_{i'}}(-\sqrt{n_1}\zeta_i,-\sqrt{n_1}\zeta_{i'})p_ip_{i'}
  +F_{\varPsi_i,-\varPsi_{i'}}(-\sqrt{n_1}\zeta_i,\sqrt{n_1}\zeta_{i'})p_i(1-p_{i'})\\
&+F_{-\varPsi_i,\varPsi_{i'}}(\sqrt{n_1}\zeta_i,-\sqrt{n_1}\zeta_{i'})(1-p_i)p_{i'}
 +F_{-\varPsi_i,-\varPsi_{i'}}(\sqrt{n_1}\zeta_i,\sqrt{n_1}\zeta_{i'})(1-p_i)(1-p_{i'}),
\end{split}
\]
completing the proof.
\end{pr}

\section{Additional simulations results}
\label{sec.online.3}

 \begin{table}[htp]
 \caption{Average of the estimated values and their empirical mean square error for various data distributions and various values of $n$, for the approximated absolute error loss, with $d=n^{-1}$, in the case of sample mean.}
 \label{table.Abs}
 \centering{\footnotesize
}
 \end{table}

\end{document}